\documentclass[12pt,oneside]{amsart}
\usepackage[utf8]{inputenc}
\usepackage[english]{babel}

\usepackage[margin=1in]{geometry}

\usepackage{multicol}

\usepackage{amsmath}
\usepackage{amsthm}
\usepackage{amssymb}
\usepackage{graphicx}
\usepackage{mathrsfs}
\usepackage[colorinlistoftodos]{todonotes}
\usepackage[colorlinks=true, allcolors=black]{hyperref}
\usepackage{comment}
\usetikzlibrary{graphs}
\usepackage{float}
\usepackage{xcolor}
\usepackage{soul}
\usepackage{tikz-cd}

\usepackage{phaistos}

\newcommand{\showcomments}{yes}
\newcommand{\showrefcomments}{yes}
\renewcommand{\showrefcomments}{no}

\newsavebox{\commentbox}
%
{\ifthenelse{\equal{\showcomments}{yes}}%
{\footnotemark
        \begin{lrbox}{\commentbox}
        \begin{minipage}[t]{1.5in}\raggedright\sffamily\tiny
        \footnotemark[\arabic{footnote}]}
{\begin{lrbox}{\commentbox}}}%
{\ifthenelse{\equal{\showcomments}{yes}}%
{\end{minipage}\end{lrbox}\marginpar{\usebox{\commentbox}}}
{\end{lrbox}}}

%
{\ifthenelse{\equal{\showrefcomments}{yes}}%
{\footnotemark
        \begin{lrbox}{\commentbox}
        \begin{minipage}[t]{1.25in}\raggedright\sffamily\tiny
        \footnotemark[\arabic{footnote}]}
{\begin{lrbox}{\commentbox}}}%
{\ifthenelse{\equal{\showrefcomments}{yes}}%
{\end{minipage}\end{lrbox}\marginpar{\usebox{\commentbox}}}
{\end{lrbox}}}

\newtheorem{thm}{Theorem}[section]

\newtheorem{theorem}[thm]{Theorem}
\newtheorem{corollary}[thm]{Corollary}
\newtheorem{lemma}[thm]{Lemma}
\newtheorem{proposition}[thm]{Proposition}

\newtheorem{claim}[thm]{Claim}

\newenvironment{customthm}[1]
  {\innercustomthm}
  {\endinnercustomthm}

\newenvironment{customcor}[1]
  {\innercustomcor}
  {\endinnercustomcor}

\newenvironment{customquest}[1]
  {\innercustomquest}
  {\endinnercustomquest}

\newtheorem{conjecture}[thm]{Conjecture}
\newtheorem{question}[thm]{Question}

\theoremstyle{definition}

\newtheorem{definition}[thm]{Definition}

\newtheorem{prob}[thm]{Problem}

\theoremstyle{remark}

\newtheorem{remark}[thm]{Remark}

\newcommand{\scname}[1]{\text{\sf #1}}
\newcommand{\area}{\scname{Area}}

\usepackage{xcolor}

\setlength{\textwidth}{5.55in}
\setlength{\textheight}{8.05in}
\hoffset= .3in
\voffset=.35 in

\title{Taut smoothings of arcs and curves}

\author{Macarena Arenas}
\address{DPMMS, Centre for Mathematical Sciences, Wilberforce Road, Cambridge, CB3 0WB, UK
 and Clare College, University of Cambridge, Cambridge, CB2 1TL, UK}
\email{mcr59@dpmms.cam.ac.uk}

\author{Max Neumann-Coto}
\address{Instituto de Matemáticas, Universidad Nacional Autónoma de México, Ciudad Universitaria, 04510, Mexico City, Mexico}
\email{max.neumann@im.unam.mx}

\subjclass[2020]{57M05, 57M50, 57K20}
\keywords{Curves on surfaces,  self-intersections, geodesic, $k$-systole, length spectrum}
\thanks{The first author was supported by the Denman Baynes Junior Research Fellowship at Clare College, Cambridge.}


\begin{document}

\begin{abstract}
    We study the geometric and combinatorial effect of smoothing an intersection point in a collection of arcs or curves on a surface. We prove that  all taut arcs with fixed endpoints and all taut 1-manifolds with at least two non-disjoint components on an orientable surface with negative Euler characteristic admit a taut  smoothing, and also that all taut arcs with free endpoints admit a smoothing that is either taut or becomes taut after removing at most one  intersection. We deduce that for every Riemannian metric on a surface, the shortest properly immersed arcs  with at least $k$ self-intersections have exactly $k$ self-intersections when the endpoints of the arc are fixed, and at most $k+1$ self-intersections otherwise, and that the arc length spectrum is ``coarsely ordered" by self-intersection number. Along the way, we obtain partial analogous results in the case of curves.
\end{abstract}

\dedicatory{In memory of Peter Scott}
\maketitle

\section{Introduction}

 This paper is about homotopy classes of properly immersed 1-manifolds (collections of immersed curves and arcs) on an orientable surface $\Sigma$ with negative Euler characteristic, and about the connection between distinct homotopy classes resulting from certain surgery operations, known as smoothings.
 A smoothing is a way of resolving an intersection point by deleting a neighbourhood of the point and joining the ends of the remaining arcs in pairs, as shown in Figure~\ref{example intro}. 
 These purely topological operations have deep geometric consequences, and provide a way of relating
 the lengths and intersection numbers of geodesics on the surface.

 \begin{figure}[h]
\centering
\includegraphics[height=0.4in]{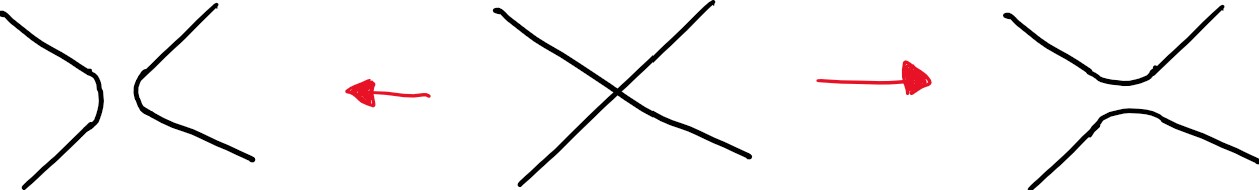}
\caption{Smoothings.}
\label{example intro}
\end{figure}

We focus on \emph{taut} 1-manifolds, that is, immersed 1-manifolds on $\Sigma$ that 
have the minimal number of self-intersections within their free homotopy class (rel boundary). In this work  1-manifolds are not assumed to be connected, and may be comprised of both closed curves and arcs. 

A taut 1-manifold $\alpha$ \emph{admits a taut smoothing} if there is a double point $p$ and a smoothing at $p$ resulting in a taut 1-manifold. So if  $\alpha$  has $k$ self-intersections, the result of the smoothing cannot be homotoped to have less than $k-1$ self-intersections

 We emphasise that in this paper, smoothings are performed at one intersection point at a time. It is a different problem altogether to try to understand the connection between a given $1$-manifold and the possible results of smoothing multiple intersection points at once.

\begin{customquest}{1}\label{qn:1 intro}
Does every taut 1-manifold on a surface admit a taut smoothing? 
\end{customquest}

The existence of a taut smoothing for a taut 1-manifold implies the existence of a taut smoothing for all taut 1-manifolds in the same free homotopy class
(where, in the case of arcs,  the endpoints may or may not be fixed).

The result of a smoothing at a double point $p$ does not always have the same number of connected components as the original 1-manifold. For instance, in the case of a curve or an arc, at every double point, precisely one of the smoothings produces again a curve or an arc. This suggests the following strengthening of Question~\ref{qn:1 intro}, which is particularly important for applications.

\begin{customquest}{1'}\label{qn:1' intro}
Does every taut 1-manifold admit a taut smoothing with no more components than the original 1-manifold? 
In particular, can every taut curve be smoothed to a taut curve, and can every taut arc be smoothed to a taut arc?
\end{customquest}

The main theorems of this work settle Question~\ref{qn:1' intro} in the case of arcs:

\begin{customthm}{A}
[Theorem~\ref{thm: arcs}+Theorem~\ref{cor:arcos libres}]\label{thm: arcs intro}
If $\alpha$ is a non-simple arc on an orientable surface $\Sigma$ that is taut fixing its endpoints, then some smoothing of $\alpha$ produces an arc that is taut fixing its endpoints.
If $\alpha$ is taut  with free endpoints and has $k$ self-intersections, then $\alpha$  has a smoothing that produces an arc that cannot be homotoped to have less than $k-2$ self-intersections. 
\end{customthm}

For arcs that are taut with free endpoints, it is not always the case that one can find a taut smoothing. 
One example is illustrated in Figure~\ref{fig: freearcs}.

There are examples of taut curves and arcs with an arbitrarily large number of self-intersections for which only one smoothing produces a taut curve or arc. 
There are also examples for which some smoothings produce curves or arcs that  are homotopically simple.
Figure~\ref{fig: onlyoneforarcs} hints at how to construct such behaviour.

 \begin{figure}[h]
\centering
\includegraphics[height=.9in]{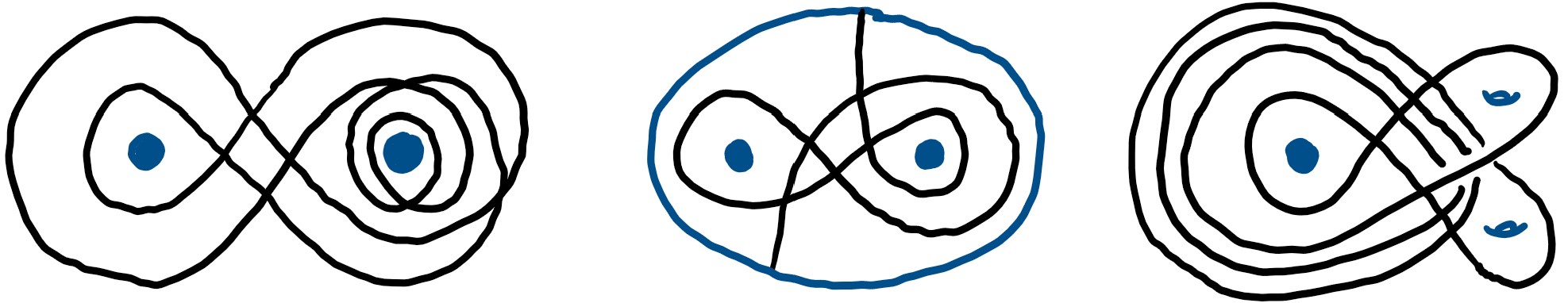}
\caption{Left and centre: A taut curve and a taut arc both of which  only have one connected taut smoothing.  Right: a curve with a smoothing that removes all self-intersections. These examples can be generalised to have an arbitrarily large number of self-intersections.}
\label{fig: onlyoneforarcs}
\end{figure}

A positive answer to Question~\ref{qn:1' intro}  for curves or arcs  implies, respectively, a positive answer (see the proof of Corollary~\ref{cor: arcs geometryfull}) to the following question for curves or arcs:

\begin{customquest}{2}\label{qn:2 intro}
Is it true that on every Riemannian surface the shortest geodesics with at least $k$ self-intersections have exactly $k$ self-intersections? And that the shortest geodesic arcs with at least $k$ self-intersections have exactly $k$ self-intersections?
\end{customquest}

A shortest geodesic with at least $k$ self-intersections  is also  called a \emph{k-systole}.

For curves,  Question~\ref{qn:2 intro} is quite well-known, and has been asked many times in the literature (see for instance~\cite{Basmajian93,Bus92}). In this case, part of the significance of Question~\ref{qn:2 intro}   stems from the fact that it translates to a monotonicity question for the ordered \emph{length spectrum} of a surface -- thas is, the ordered set
of  lengths of shortest geodesics in each  free homotopy class of curves. It is clear that an (ordered) \emph{arc length spectrum} can also be defined for geodesic arcs (both with fixed and with free endpoints), and that Question~\ref{qn:2 intro} for arcs also relates to the monotonicity of this spectrum.

For each Riemannian metric on $\Sigma$, it is easy to show that the shortest essential geodesics are simple if $\Sigma$ is not a 3-holed sphere, and in~\cite{Bus92}  Buser proved that the shortest non-simple geodesics always have $1$ self-intersection. It can also be shown via a simple counting argument 
that for each Riemannian metric on $\Sigma$ there are infinitely many $k$'s (dependent on the metric!) such that the shortest geodesics with at least $k$ self-intersections have exactly $k$ self-intersections. For compact surfaces, no other results characterising the intersection numbers of $k$-systoles independently of the metric are present in the literature.  See~\cite{EP20} for a related discussion.

Theorem~\ref{thm: arcs intro} shows:

\begin{customcor}{B}[Corollary~\ref{cor: arcs geometryfull}]\label{cor: arcs geometry intro} For every orientable Riemannian surface $\Sigma$ with convex boundary, the shortest taut arcs with  distinct fixed endpoints and at least $k$ self-intersections have exactly $k$ self-intersections. The shortest taut arcs with free endpoints and at least $k$ self-intersections have at most $k+1$ self-intersections.
\end{customcor}

This settles Question~\ref{qn:2 intro}  in the case of arcs, and constitutes, to the best of our knowledge, the only  result to date computing the minimal self-intersection numbers of shortest  geodesic arcs or curves precisely for all values of $k$ and for all compact orientable surfaces: previous results in this direction either  describe asymptotic behaviour,  give  bounds depending on $k$ for the number of self-intersections of a $k$-systole, or produce exact results only in the case of cusped hyperbolic surfaces  -- where the geometry is rather special --  and for large enough values of $k$ dependent on the metric. See Subsection~\ref{subsec: related} for more details on related results.

Corollary~\ref{cor: arcs geometry intro} puts no restriction on the Riemannian metric on $\Sigma$: the only properties  being used are that primitive geodesics intersect transversally, and that rounding-out corners reduces length.

\begin{figure}[h]
\centering
\includegraphics[height=.63in]{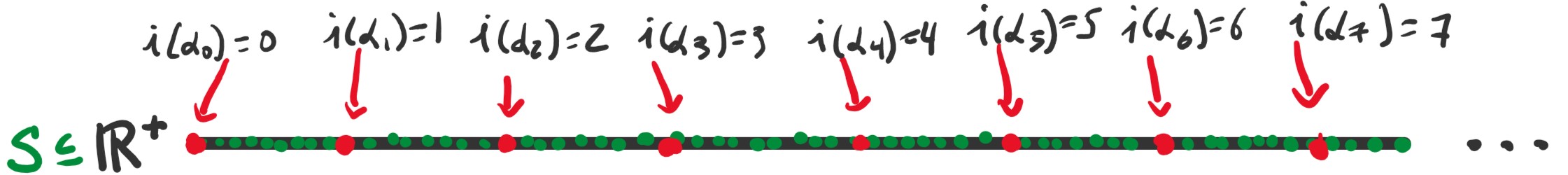}
\caption{For all Riemannian metrics, Corollary~\ref{cor: arcs geometry intro} implies that
there is an increasing sequence of points in the arc length spectrum $S$ that is graded by intersection numbers.}\label{fig: spectrum}
\end{figure}

We note that, both for curves and for arcs, a positive answer to Question~\ref{qn:1' intro} is strictly stronger than a positive answer to Question~\ref{qn:2 intro}. 
In fact, the first implies that for each free homotopy class (resp. free homotopy class rel endpoints) of curves (resp. arcs) with $k$ self-intersections, there is another class of curves (resp. arcs) with $k-1$ self-intersections such that for every Riemannian metric on $\Sigma$, the shortest geodesic (resp. geodesic arc) in the first class is longer that the shortest geodesic (resp. geodesic arc) in the second class. See Corollaries~\ref{cor:universalarc}  
 and~\ref{cor:universalarcfree} for the precise statements.

By iteratively applying Theorem~\ref{thm: arcs intro} and Theorem~\ref{cor:arcos libres}, our results imply that the arc length spectrum is ``coarsely ordered" by intersection numbers.

Along the way to proving Theorem~\ref{thm: arcs intro}, we answer Question~\ref{qn:1' intro} in the positive for taut curves satisfying a fairly general condition:

\begin{customthm}{C}
[Lemma~\ref{lem: smoothing invariants htpy class}+Corollary~\ref{cor:maincurvessmooth} +Theorem~\ref{thm:mainsmooth}]\label{thm:taut smooth curve if no trisq intro}
     Let $\Sigma$ be an orientable surface.  If a taut curve $\alpha$ in $\Sigma$ satisfies that either 
     \begin{enumerate}
         \item $\alpha$ has an unoriented corner that is not the corner of a  quadrilateral or a pentagon, or
         \item   $\alpha$ admits a decomposition as $\alpha=\beta \gamma$ where $\gamma$ is a  subloop that only intersects $\beta$ at its endpoints, and no triangle of $\alpha$ has $\gamma \cap \beta$ as a vertex,
     \end{enumerate}
     then  every taut curve homotopic to $\alpha$ can be smoothed to a taut curve (after potentially deleting a simple curve component).  
\end{customthm}

While the conditions in Theorem~\ref{thm:taut smooth curve if no trisq intro}  seem somewhat restrictive (see Figure~\ref{fig: regions}), many curves satisfy them or can be homotoped to satisfy them. For instance, if $\alpha$ does not fill the surface, then some complementary region $R$ is not a disc, so if the boundary of $R$ is not cyclically oriented, then Theorem~\ref{thm:mainsmooth} can be applied to $\alpha$. 

 \begin{figure}[h]
\centering
\includegraphics[height=1.1in]{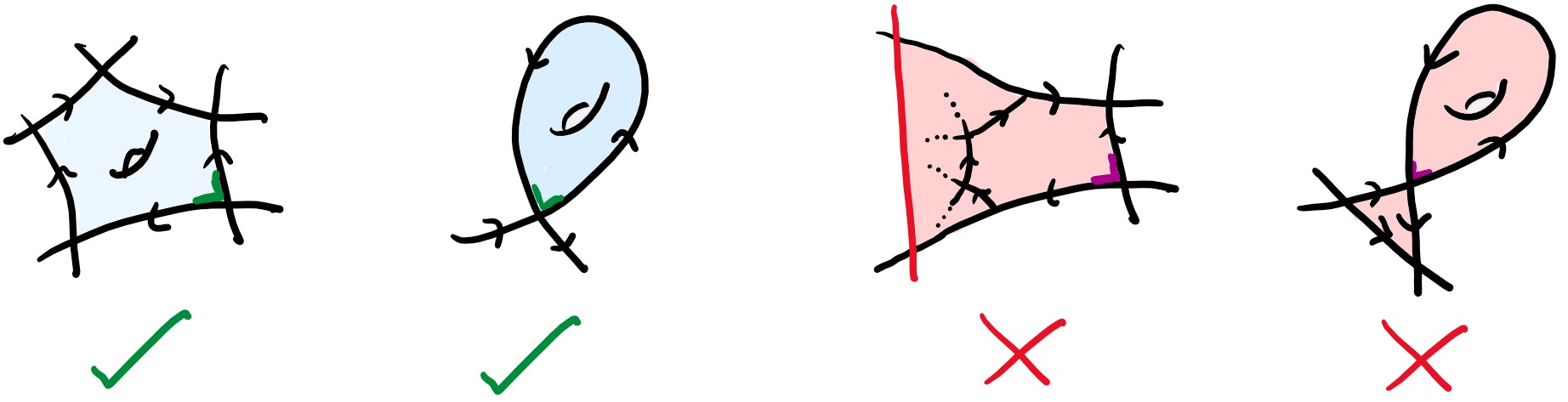}
\caption{The corners marked on the two leftmost figures satisfy the hypotheses of Theorem~\ref{thm:taut smooth curve if no trisq intro}; the corners marked on the two rightmost figures do not.}
\label{fig: regions}
\end{figure}

The proofs of  Theorems~\ref{thm: arcs intro} and~\ref{thm:taut smooth curve if no trisq intro} use Proposition~\ref{prop:pathologies}, which  classifies small bigons and monogons arising from smoothings, together with Theorem~\ref{thm:mainsmooth}, which is the main technical theorem of this work, and  a notion of ``combinatorial angle" which has the advantage of being independent of the metric and of the configuration of the curve. One subtlety that complicates arguments is that when  homotoping a 1-manifold to find a ``better" configuration, we need to be able to guarantee the existence of a sequence of triangular moves that preserves a number of additional  properties (see for instance Lemma~\ref{lem: moveable triangles}). 

The proofs of Theorems~\ref{thm: arcs intro} and~\ref{thm:taut smooth curve if no trisq intro}  have the advantage of being constructive, so they not only show the existence of taut smoothings under the appropriate hypotheses, but also show how to find such smoothings explicitly. 
Finally, we point out that our results do not require the surface to be of finite type.

Using again Proposition~\ref{prop:pathologies} and some hyperbolic geometry, we additionally answer Question~\ref{qn:1 intro} completely in the case of 1-manifolds with at least $2$ non-disjoint connected components.

\begin{customthm}{D}[Theorem~\ref{prop:taut pair}]\label{thm:multi-intro} 
If a taut 1-manifold $\alpha$ on an orientable surface $\Sigma$ contains $2$ non-disjoint primitive components, then $\alpha$ has a smoothing that is taut fixing its endpoints.
\end{customthm}

Geometric consequences analogous to Corollary~\ref{cor: arcs geometry intro}  can also be derived from Theorems~\ref{thm:taut smooth curve if no trisq intro} and~\ref{thm:multi-intro} for certain classes of closed geodesics, for multi-arcs, and for geodesic 1-manifolds with intersecting components.

\subsection{Related works}\label{subsec: related}

  Question~\ref{qn:2 intro} has been investigated by a number of authors. In~\cite{EP20}, Erlandsson--Parlier show that the number of self-intersections of $k$-systoles can be bounded above by a linear function of $k$, and also show that in the case of orientable finite-type surfaces with cusps, the number of self-intersections of $k$-systoles behaves asymptotically like $k$ as $k$ grows. In~\cite{Vo22}, Vo shows that  if the surface $\Sigma$ is hyperbolic and has at least one cusp, then for  sufficiently large $k$ depending on the hyperbolic metric on $\Sigma$, the shortest geodesics with at least $k$ self-intersections have exactly $k$ self-intersections. 
 In~\cite{BPV22}, Basmajian--Parlier--Vo use this result to bound, in terms of $k$, the length of  $k$-systoles for sufficiently large $k$.
 For surfaces of small complexity, there are also some computational results~\cite{ChasPhillips10,ChasPhillips12, ChasMcMullenPhillips19, DGS21} investigating the relation between (geometric or combinatorial) length and self-intersection numbers for small enough values of $k$. 

Taut smoothings, and more generally smoothing without monogons, are also related to the study of skein algebras~\cite{Przytycki91,Turaev89}, and in particular to the question of finding positive bases~\cite{ThurstonD14} for them. See Corollary~\ref{cor:nomomogons} and the ensuing paragraph.
 
 In the preprint~\cite{DTh}, the same author proves the \emph{Smoothing Lemma}, a formula that relates the number of intersections between a simple curve $s$ and an immersed curve $c$ with the number of intersection between $s$ and the smoothings of $c$. He then uses this result to study Dehn--Thurston coordinates.

Recent work of Martinez-Granado--Thurston~\cite{MGT21} explores continuous functions of geodesic currents, and shows that in many cases, these behave similarly to the functions of closed curves that they extend. To determine whether the functions on geodesic currents are ``nice'' they investigate their behaviour under performing smoothings. 

In a rather different direction, it was pointed out to us by Ivan Smith that there is work in the symplectic setting  dealing with similar questions, in particular in relation with combinatorial Floer homology and tautologically unobstructed $1$-manifolds~\cite{AurouxSmith21,dSRS14,HKK17}. In that framework, however, it is important that isotopies be Hamiltonian, or at the very least area preserving;  we have not attempted to engage with these properties in the current work.

\subsection{Structure of the paper}
In Section~\ref{sec: background} we introduce the necessary terminology and make a few basic observations. In Section~\ref{sec: classifying} we classify the possible bigons and monogons that can be produced by smoothings, and  explore the relationship between taut smoothings of multi-component 1-manifolds and maximal hyperbolic angles. The main result in this section is Theorem~\ref{thm:multi-intro}. In Section~\ref{sec:taut smoothing} we prove the main results of this work, namely Theorems~\ref{thm: arcs intro} and~\ref{thm:taut smooth curve if no trisq intro}. In Section~\ref{sec: geometry} we discuss some geometric consequences of the results in the previous sections.  Finally, in Section~\ref{sec: further}, we state and  discuss some related questions.

\subsection*{Acknowledgements} The second author is grateful to Joel Hass for introducing him to some of these questions. The first author thanks Ivan Smith for pointing out the relation with symplectic topology, and Henry Wilton  for helpful conversations. The authors thank Dylan Thurston for pointing out the connection between Question~\ref{qn:1' intro} and the main theorem in~\cite{ThurstonD14}.

\section{Preliminaries}\label{sec: background}

\subsection{Minimal intersections and configurations}\label{subsec: basicstuff}

In this paper, $\Sigma$  always denotes an orientable surface with negative Euler characteristic. We refer to the closed properly immersed curves on $\Sigma$ simply as \emph{curves}, and to the properly immersed arcs as \emph{arcs}. A \emph{1-manifold} in $\Sigma$ is a finite collection of curves and arcs. All  \mbox{1-manifolds} are assumed to be in \emph{general position}, so they intersect transversally at double points, all of which lie in the interior of $\Sigma$. A 1-manifold is \emph{simple} if it has no self-intersections. 

A \emph{taut} curve in $\Sigma$ is an essential curve with the minimum number of self-intersections within its free homotopy class. Similarly, a \emph{taut} arc is an essential arc with the minimum number of self-intersections, but we must specify whether the endpoints are fixed or allowed to move freely along $\partial \Sigma$.
Finally, a 1-manifold $\alpha$ is \emph{taut} if each component self-intersects minimally, and distinct components intersect minimally (either with fixed or free endpoints).

We will assume that all curves are primitive (not homotopic to proper powers of other curves) and, more generally, that all 1-manifolds are primitive (so every curve component is primitive).

By work of Freedman--Hass--Scott~\cite{FHS82} and Neumann-Coto~\cite{NC01}, a primitive 1-manifold on a surface $\Sigma$ is taut if and only if there exists a Riemannian metric\footnote{If one restricts to hyperbolic metrics, it is true that every hyperbolic 1-manifold is taut, but not that every taut configuration can be realised by geodesics in such a metric~\cite{HassScott99}.} on $\Sigma$ for which its components are the shortest geodesic curves and arcs in their free homotopy classes.

\subsection{Smoothings}

\begin{definition} 
Let $\alpha$ be a 1-manifold. A \emph{smoothing of $\alpha$ at a double point $p$} is the result of deleting a neighbourhood $N_\epsilon(p)$ of $p$ in $\Sigma$ and connecting pairs of endpoints of $\alpha - N_\epsilon(p)$, as in Figure~\ref{fig: smoothingsdetail}. 
\end{definition}

 \begin{figure}[h]
\centering
\includegraphics[height=0.5in]{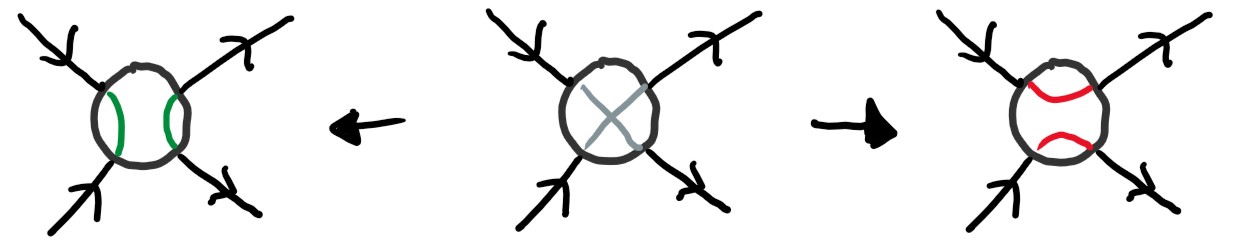}
\caption{The two possible kinds of smoothings.}
\label{fig: smoothingsdetail}
\end{figure}

We say that a smoothing is \emph{taut} if it produces a taut 1-manifold.

At times it will be convenient to abuse notation and use the term \emph{smoothing} both for the operation described above and for its result.

 For curves, smoothings can be described more concretely as follows. Let $\alpha:S^1 \rightarrow \Sigma$ be a curve in $\Sigma$, let $p$ be a double point of $\alpha$,  let $S^1=A\cup B$ where $A$ and $B$ are the intervals whose endpoints map to $p$ via $\alpha$, and finally let $\bar \beta$ denote the arc or curve $\beta$ traversed in the opposite direction. 

Then a  \emph{smoothing of $\alpha$ at $p$} is either
\begin{enumerate}
    \item \label{smt 1} the concatenation
 $\alpha ^+= \alpha |_A \ \bar\alpha |_B$, or
 \item \label{smt 2}
 the union $\alpha ^-= \alpha |_A \cup \alpha |_B$.
\end{enumerate}

Whether a smoothing at a double point $p$ is of type~\eqref{smt 1} or~\eqref{smt 2} depends on how the endpoints of $\alpha - N_\epsilon(p)$ are being rejoined:  $\alpha$ comes with an orientation, which can be read locally on the arcs around $p$. 
If the smoothing connects pairs of arcs with consistent orientation, as in the right of Figure~\ref{fig: smoothingsdetail}, then the result is disconnected, and thus of type~\eqref{smt 2}; if the smoothing connects pairs of arcs with opposite orientations, as in the left of Figure~\ref{fig: smoothingsdetail}, then the result is connected, and thus of type~\eqref{smt 1}. 
This is because of how the $4$ endpoints of the arcs around $p$ in $N_\epsilon(p)$ connect to the rest of the curve. We discuss this in more detail in Subsection~\ref{subsec: repel}.

The following result follows from the work of Hass and Scott ~\cite{HassScott85}:

\begin{lemma}\label{lem: smoothing invariants htpy class}
    If $\alpha$ and $\alpha'$ are two freely homotopic taut 1-manifolds with primitive and non-homotopic components on a surface $\Sigma$, then
    \begin{enumerate}
        \item there is a natural correspondence between the intersection points of $\alpha$ and $\alpha'$,
        \item each smoothing of $\alpha$ is freely homotopic to the corresponding smoothing of $\alpha'$.
    \end{enumerate}
\end{lemma}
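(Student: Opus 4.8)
The plan is to reduce everything to bookkeeping along a well-chosen homotopy from $\alpha$ to $\alpha'$. First I would invoke the well-known fact that a free homotopy between two taut $1$-manifolds on $\Sigma$ whose components are primitive and pairwise non-homotopic can be realised by a finite sequence of ambient isotopies of $\Sigma$ (isotopic to the identity) together with triangular moves, each triangular move being supported in an embedded disc $D\subset\Sigma$ (this is the surface analogue, for minimal-position diagrams, of the statement that isotopic link diagrams differ by Reidemeister moves, with the bigon/monogon moves not needed because no configuration along the way fails to be taut). Both kinds of elementary move induce a bijection between the double points before and after: an ambient isotopy is a homeomorphism, and a triangular move fixes the crossing across which it slides a strand, moves the other two crossings of the triangle without destroying or creating any, and leaves every crossing outside $D$ untouched. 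Composing these bijections along the sequence produces a correspondence between the double points of $\alpha$ and those of $\alpha'$, proving (1); and since every elementary move also identifies, at each double point, the four germs of strands (hence the two possible reconnections), the correspondence carries every smoothing of $\alpha$ to a well-defined ``corresponding'' smoothing of $\alpha'$.

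To check that this correspondence is genuinely natural --- independent of the chosen homotopy --- I would fix an auxiliary hyperbolic metric $\rho$ on $\Sigma$ and replace $\alpha$ by the taut $1$-manifold $\alpha_\rho$ whose components are the geodesic representatives of those of $\alpha$. In the universal cover $\mathbb{H}^2$, the self- and mutual intersections of $\alpha_\rho$ are in canonical bijection with the $\pi_1(\Sigma)$-orbits of pairs of lifts whose pairs of endpoints on $S^1_\infty$ link, and this combinatorial data depends only on the free homotopy classes of the components. Applying the previous paragraph to the pair $(\alpha,\alpha_\rho)$, and likewise to $(\alpha',\alpha'_\rho)$ with $\alpha'_\rho=\alpha_\rho$, identifies the double points of $\alpha$ and of $\alpha'$ with the same such combinatorial set; this is the asserted natural correspondence.

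For part (2) I would induct on the length of the sequence of elementary moves, so that it suffices to treat a single move $\alpha\rightsquigarrow\alpha_1$ together with a double point $p$ of $\alpha$, its corresponding double point $p_1$ of $\alpha_1$, and corresponding choices of smoothing. If the move is an ambient isotopy realised by a homeomorphism $h$ isotopic to the identity, then, since a homeomorphism commutes with the smoothing operation, the smoothing of $\alpha_1$ at $p_1=h(p)$ equals the image under $h$ of the smoothing of $\alpha$ at $p$, so the two are ambient isotopic and in particular freely homotopic. If the move is a triangular move supported in a disc $D$, then the smoothing $\alpha_p$ and the corresponding smoothing $(\alpha_1)_{p_1}$ agree outside $D$ (the move is supported in $D$, the smoothing alters only a neighbourhood of the relevant intersection point, and these interact only inside $D$), and inside $D$ they determine the same pairing of the points of $\partial D$ through which strands enter and leave --- neither the triangular move nor the passage to the corresponding smoothing changes that pairing. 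Since $D$ is simply connected, any two $1$-manifolds in $\overline D$ with the same pairing on $\partial D$ are homotopic rel $\partial D$; combined with the agreement outside $D$ this shows $\alpha_p$ is freely homotopic to $(\alpha_1)_{p_1}$, and the induction closes.

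The one genuine obstacle is the input to the first step: that the homotopy between $\alpha$ and $\alpha'$ can be taken to consist only of ambient isotopies and triangular moves, with no bigon or monogon births and deaths. This is precisely where tautness and the primitivity and non-homotopy hypotheses on the components are used, and it is the one place where I would rely on the literature on homotopies of minimal curve systems on surfaces (e.g. the works of Hass--Scott, de Graaf--Schrijver, and Neumann-Coto in the spirit of those already cited) rather than argue from scratch; once it is granted, the rest is the routine tracking described above.
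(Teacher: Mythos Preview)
Your proof is correct and follows essentially the same approach as the paper's: both invoke the Hass--Scott result that two taut configurations are connected by a sequence of triangle moves (the paper cites \cite{HassScott85} directly), and both then verify that smoothing commutes with a single triangle move. The paper handles the commutation case-by-case with a picture (Figure~\ref{fig: commute}), whereas you phrase it more abstractly via the induced pairing on $\partial D$; these are the same argument in different clothing. Your additional paragraph establishing independence of the chosen homotopy (via the geodesic representative and linking at infinity) is a genuine enrichment---the paper simply asserts the correspondence is ``natural'' without justifying independence---but it does not change the overall strategy.
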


\begin{proof}
It was shown in~\cite{HassScott85} that under the assumptions there is a homotopy from $\alpha$ to $\alpha'$ that does not add or remove intersections at any time. So each intersection point can be traced along the homotopy from $\alpha$ to $\alpha'$.
The homotopy consists of a sequence of triangle moves at innermost triangles. To show that the smoothings at an intersection point of $\alpha$ and the corresponding point of $\alpha'$ are freely homotopic we need to see that the smoothing operation commutes with the triangle moves. This is clear if the smoothing is not performed at a vertex of the triangle $\Delta$.
If it is, the smoothing and the triangle move do not change the curve outside a neighbourhood of $\Delta$, and the results of doing the two operations in different order inside the neighbourhood are homotopic as can be seen in Figure~\ref{fig: commute}.
\end{proof}

It follows from Lemma~\ref{lem: smoothing invariants htpy class} that the existence of taut smoothings is an invariant of freely homotopic taut curves, and moreover, that if a smoothing at a vertex $v$ of $\alpha$ is taut then the corresponding smoothing at the image $v'$ in any other taut configuration $\alpha'$ is also taut.

 \begin{figure}
\centering
\includegraphics[height=1.4in]{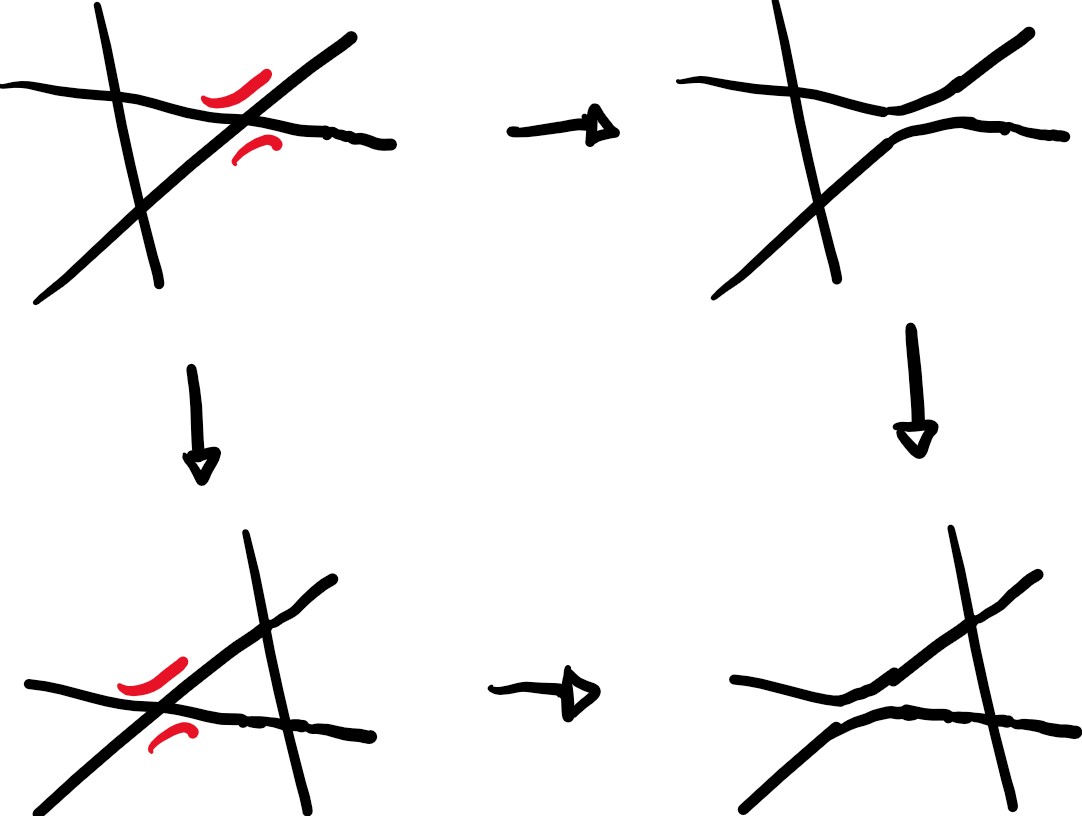}
\caption{Smoothing a double point of a small triangle commutes with performing a triangle move on the same triangle. }
\label{fig: commute}
\end{figure}

\begin{remark}
Lemma~\ref{lem: smoothing invariants htpy class} does not hold for more than $1$ smoothing: i.e., if two or more vertices of  taut freely homotopic 1-manifolds are smoothed simultaneously, then the resulting 1-manifolds may not be freely homotopic. An example is given in Figure~\ref{fig: no commute}.
\end{remark}

 \begin{figure}
\centering
\includegraphics[height=1.7in]{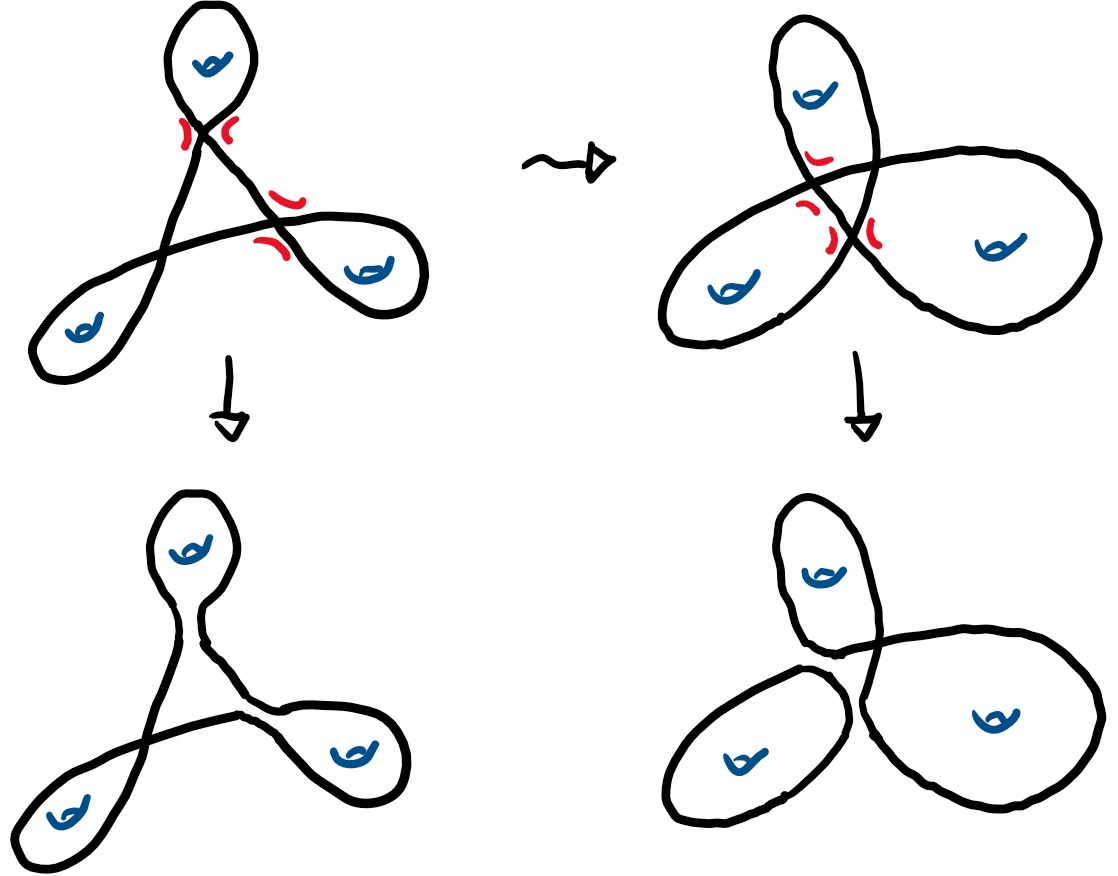}
\caption{Smoothings at two points of  freely homotopic curves which yield non-freely homotopic results.}
\label{fig: no commute}
\end{figure}

In general, non-primitive curves cannot be smoothed to taut curves (see Figure~\ref{fig onlyonedis}). In fact, the only taut smoothings of a curve representing a power are those that disconnect it into curves that represent smaller powers.

  \begin{figure}[h]
\centering
\includegraphics[height=1.1in]{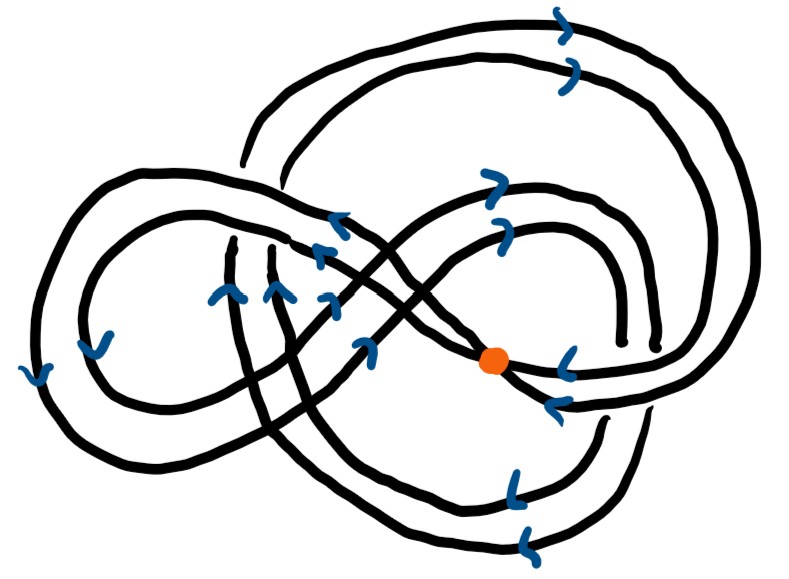}
\caption{A taut curve representing a square in $\pi_1\Sigma$ has only one taut smoothing, which disconnects the curve.}
\label{fig onlyonedis}
\end{figure}

\subsection{Oriented and unoriented corners}\label{subsec: repel}

Viewing the image of an oriented 1-manifold $\alpha$ as an embedded, 4-regular graph in $\Sigma$ whose edges are labelled with orientations, a  \emph{corner} $c$ of $\alpha$ is a pair of  half-edges  of  $\alpha$  that correspond to adjacent vertices in  the link of $p$ in $\Sigma$, where $p$ is a double point of $\alpha$. 
We introduce some necessary terminology:

\begin{definition}\label{def: corner orientations}
A corner $c$ is \emph{unoriented} if the two arcs forming $c$ point towards $p$, or both point away from $p$; otherwise $c$ is \emph{oriented}. 
\end{definition}

Since we only consider curves in general position, around each intersection point of a 1-manifold there are  exactly $4$ corners, which naturally fall into $2$ opposite pairs: one pair of unoriented corners and one pair of oriented corners. So, locally, if an arc ``sees'' an oriented corner to one side, then it sees an unoriented corner to the other side.

A smoothing is \emph{oriented} if it is determined by a pair of oriented corners and it is  \emph{unoriented} otherwise.
The ensuing observation
will be used many times throughout the rest of the paper.

\begin{remark}\label{rmk:orientations and connectedness}
For  1-manifolds, smoothings do not change the number of arc components.
The number of curve components goes up by 1 for oriented smoothings on a single component, goes down by 1 for all smoothings between two components, and remains the same in all other cases.
\end{remark}

We also note that, in view of the above, the result of a smoothing does not necessarily come with an induced orientation. Indeed, for a smoothing on a curve or arc, the result only inherits an orientation if the smoothing was oriented; for a smoothing between distinct connected components, whether the connected components of the result inherit  an orientation depends on the choice of orientations on the original 1-manifold.

\section{Classifying pathologies and chasing maximal hyperbolic angles}\label{sec: classifying}

In this section we classify the small monogons and  bigons that can arise from a smoothing of a taut 1-manifold.
Additionally, we give an argument using maximal angles to show that most 1-manifolds in an orientable surface with a pair of intersecting components
admit taut smoothings, and explain why this argument fails for a single curve or arc. These results and observations serve to motivate our approach in later sections by exposing some of the difficulties intrinsic to Question~\ref{qn:1 intro}.

We start with a few definitions.

Recall that if $\alpha$ is an essential 1-manifold on a surface $\Sigma$, then its preimage $\tilde \alpha$ in $\widetilde \Sigma$ consists of immersed topological lines and/or arcs. 

An \emph{n-gon} of $\tilde \alpha$ is a simple closed curve in $\widetilde \Sigma$ made of $n$ subarcs of the lines above $\alpha$. Thus, a $1$-gon, also called a \emph{monogon}, is a simple loop of an immersed line $\tilde \alpha$, and a $2$-gon, also called a \emph{bigon}, is a pair of simple arcs in $\tilde \alpha$ that meet only at their endpoints. We will also refer to $3$-gons and $4$-gons as \emph{triangles} and \emph{quadrilaterals}, respectively.
When discussing arcs, we  must also consider \emph{$\partial$-n-gons}, which are simple closed curves in $\widetilde \Sigma$ made of $n$ subarcs of the lines and arcs above of $\alpha$ and 1 arc of $\partial \widetilde \Sigma$. A $\partial$-2-gon is called a \emph{half-bigon}.

An $n$-gon of $\alpha$ is the projection of an $n$-gon of $\tilde \alpha$. Notice that
 n-gons of $\alpha$ are generally not embedded: they may have self-intersections and they may have overlapping arcs.

It follows from~\cite{FHS82} that a primitive 1-manifold $\alpha$ on a surface $\Sigma$ is taut with fixed endpoints if and only if $\alpha$ does not contain monogons (so the lines of $\tilde \alpha$ are embedded) nor bigons (so the lines of $\tilde \alpha$ cross each other in at most one point). Similarly, $\alpha$ is taut with free endpoints if and only if $\alpha$ does not contain monogons, bigons, or half-bigons.


\begin{definition}\label{def:small ngons}
A \emph{small} $n$-gon of $\alpha$ is the projection of an $n$-gon of $\tilde \alpha$ that is injective except possibly at a finite set of double points, which are not vertices of the $n$-gon.
\end{definition}

In other words, the sides of a small $n$-gon do not overlap and its corners are not identified in $\alpha$.
The relevance of small monogons, bigons, and triangles is that they give rise to equivariant moves on $\tilde \alpha$ that in some cases correspond to generalised Reidemeister moves on $\alpha$.

Much of this paper hinges on the following result:

\begin{proposition}\label{prop:small}~\cite{HassScott94}
    In an orientable surface $\Sigma$:
 \begin{enumerate}
    \item     If a curve is not taut then it has a small monogon or bigon.
    \item     If an arc is not taut with fixed endpoints then it has a small monogon or bigon.
    \item    If an arc is not taut with free endpoints then it has a small monogon,  bigon, or  half-bigon.   
 \end{enumerate}   
\end{proposition}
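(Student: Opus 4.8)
The statement to prove is Proposition~\ref{prop:small}: in an orientable surface $\Sigma$ with $\euler(\Sigma)<0$, a non-taut curve contains a small monogon or bigon, a non-taut arc with fixed endpoints contains a small monogon or bigon, and a non-taut arc with free endpoints contains a small monogon, bigon, or half-bigon. This is attributed to Hass--Scott, so I should present the proof in the spirit of their lifting arguments.

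\medskip

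\textbf{The plan.} The strategy is to lift to the universal cover $\widetilde\Sigma$ and extract an \emph{innermost} $n$-gon. By the Freedman--Hass--Scott / Hass--Scott criterion already quoted in the excerpt, if $\alpha$ is not taut (rel the appropriate boundary condition) then $\tilde\alpha$ must contain a monogon, a bigon, or (in the free-endpoint case) a half-bigon — i.e.\ some lift of a line of $\tilde\alpha$ is non-embedded, or two lifts cross twice, or a lift together with a boundary line of $\partial\widetilde\Sigma$ bounds a disc. So the real content is promoting an arbitrary such $n$-gon (with $n\in\{1,2\}$, plus the $\partial$-2-gon case) to a \emph{small} one in the sense of Definition~\ref{def:small ngons}: one whose sides do not overlap and whose corners are not identified in $\alpha$.

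\medskip

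\textbf{Key steps, in order.} First I would set up the notion of the \emph{complexity} or \emph{area} of an $n$-gon $D$: for instance, take the number of double points of $\alpha$ in the interior of the disc $D$ bounds in $\widetilde\Sigma$, together with the number of ``overlap'' arcs and the number of self-intersections of the projection, ordered lexicographically. Among all monogons, bigons, and half-bigons of $\tilde\alpha$ (a nonempty family by the tautness criterion), choose one, call it $D$, minimising this complexity. Second, I would argue that $D$ is \emph{innermost}: if a line $\ell$ of $\tilde\alpha$ entered the open disc $D$, then — because $\ell$ is a properly embedded line (or arc) and the sides of $D$ are subarcs of lines that it may cross transversally — one produces a strictly smaller monogon or bigon from the pieces of $\ell$ and the arcs of $\partial D$ it cuts off. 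This is the standard innermost-disc surgery; one has to check the bookkeeping that cutting off a smaller region genuinely lowers the chosen complexity and that the new region is again one of the allowed types (monogon, bigon, half-bigon). Third, with $D$ innermost, its interior contains no arcs of $\tilde\alpha$ at all, so in particular its projection to $\Sigma$ has no interior self-intersections \emph{coming from crossing strands} — any remaining self-intersections of the projection are forced to lie on the boundary, i.e.\ they are among the vertices. Fourth, I would rule out side-overlaps: two sides of $D$ overlapping would mean two lifts of the same line share a subarc, contradicting properness/embeddedness of lines in $\widetilde\Sigma$ (a deck transformation argument: a line and its translate sharing a subarc must coincide, forcing $\alpha$ to represent a proper power or forcing a shorter $n$-gon). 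Fifth, the vertices-not-identified condition: if the corners of $D$ mapped to the same point of $\alpha$, one can use the local picture to find a strictly smaller $n$-gon (or a monogon inside a bigon, etc.), again contradicting minimality; the excerpt's emphasis that small $n$-gons' corners are ``not identified in $\alpha$'' is exactly what this step delivers. Finally, I would treat the three cases of the proposition in parallel, noting that the half-bigon case only arises for arcs with free endpoints and only requires carrying along one arc of $\partial\widetilde\Sigma$ in all the surgeries (the boundary line plays the role of an extra ``side'' that is never crossed, only abutted).

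\medskip

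\textbf{Main obstacle.} The delicate point is the innermost-disc reduction in the presence of \emph{self}-intersections and \emph{overlaps}: unlike the classical embedded-surface setting, the $n$-gon's projection can be immersed-but-not-embedded, and a naive area argument can fail to decrease when one does surgery along an overlapping arc rather than a transverse crossing. I expect the bulk of the work is choosing the complexity measure carefully (a lexicographic triple of \emph{interior double points}, then \emph{overlap length/arcs}, then \emph{boundary identifications}) so that every one of the reduction moves — cutting along an interior strand, pushing off an overlap, resolving an identified vertex — strictly decreases it, and verifying that none of the moves can take us outside the class $\{$monogon, bigon, half-bigon$\}$. One should also be slightly careful that the hypothesis $\euler(\Sigma)<0$ (so $\widetilde\Sigma$ is a disc or half-plane and $\pi_1$ is torsion-free, with $\alpha$ primitive) is what prevents degenerate configurations — e.g.\ a line invariant under a deck transformation that could otherwise produce a ``monogon'' with no way to shrink it.
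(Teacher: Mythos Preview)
The paper does not prove this proposition; it is quoted from Hass--Scott, and the closely related Lemma~\ref{lem: minimalarea} (a monogon or bigon of minimal combinatorial area is small) is likewise cited without proof. Your strategy---pick a minimiser of some complexity and show it is small---is precisely the route Lemma~\ref{lem: minimalarea} encodes, so the overall plan is sound.

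There is, however, a genuine error in your steps~2--3. You assert that if any line $\ell$ of $\tilde\alpha$ enters the open disc $D$ bounded by a minimal bigon, one can cut off a strictly smaller monogon or bigon. This is true for monogons (there is only one side to exit through) but \emph{false} for bigons and half-bigons: if $\ell$ crosses each side of $D$ exactly once, the two resulting pieces of $D$ are triangles, and no smaller bigon is produced. A minimal-area bigon can therefore be traversed by many lines of $\tilde\alpha$, and its interior in $\widetilde\Sigma$ need not be empty; the paper in fact distinguishes ``innermost'' from ``minimal area'' and explicitly warns that an innermost bigon may fail to be small. Fortunately the empty-interior claim is irrelevant: smallness only requires that the sides not overlap in $\alpha$ and that the corners not be identified. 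So the substance of the proof lies entirely in your steps~4--5, and there your sketch is too thin. Overlapping sides means some deck transformation $g$ carries $\ell_1$ to $\ell_2$ (not that two lifts ``share a subarc''), and one must compare $D$ with its translate $gD$ to manufacture a strictly smaller $n$-gon; identified corners is handled similarly. The remark following the proposition---that it fails for multi-curves and on non-orientable surfaces (Figure~\ref{fig: nosmall})---tells you that connectedness and orientability must be used essentially in exactly these two reductions, and your outline invokes neither.
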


\begin{remark}
   Proposition~\ref{prop:small}  does not hold for multi-curves, for non-primitive curves, nor for curves or arcs on non-orientable surfaces. See Figure~\ref{fig: nosmall}.
\end{remark}

 \begin{figure}[h]
\centering
\includegraphics[height=1in]{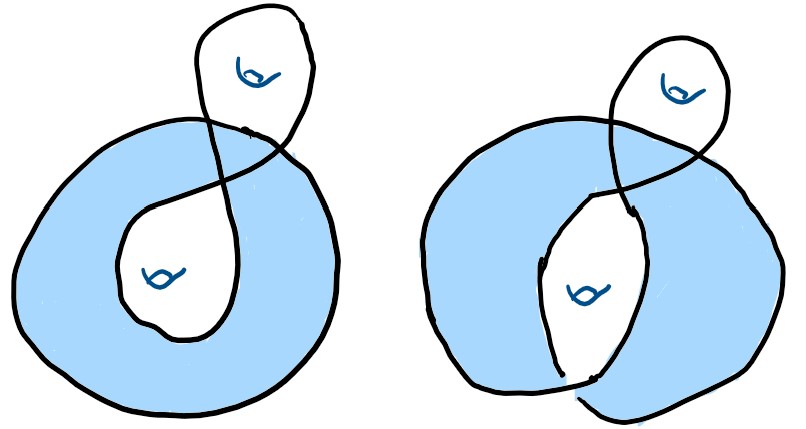}
\caption{A non-taut multi-curve (left) and a non-taut curve in a non-orientable surface (right) without small monogons or bigons
(the ``missing intersection'' is a twist in a Möbius band).}
\label{fig: nosmall}
\end{figure}

\medskip

If $D$ is a polygon of $\alpha$ in $\Sigma$, its \emph{combinatorial area} $\area(D)$ is the number of regions of $\widetilde \Sigma-\widetilde \alpha$ inside $\widetilde D$. 

The following lemma, due to the second author and P. Scott, is proven in~\cite{NCS24+};  we include an outline of the proof for completeness.

\begin{lemma}\label{lem: minimalarea}
    If a curve or arc $\alpha$ in an orientable surface is not taut and  does not contain any monogons, then any  bigon of $\alpha$ with minimal combinatorial area is small.
\end{lemma}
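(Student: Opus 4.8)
\textbf{Proof plan for Lemma~\ref{lem: minimalarea}.}

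The plan is to argue by contradiction: suppose $\alpha$ is not taut but that $D$ is a monogon or bigon of $\alpha$ of minimal combinatorial area which fails to be small. By Definition~\ref{def:small ngons}, failing to be small means one of two things happens in the lift $\widetilde D$: either two sides of $D$ overlap along a common subarc of $\widetilde\alpha$, or a corner of $\widetilde D$ is identified with some other point of $D$ under the projection $\widetilde\Sigma\to\Sigma$ (i.e.\ a vertex of the polygon is a self-intersection point of the projected polygon). In either case I want to produce a \emph{strictly smaller} monogon or bigon, contradicting minimality of $\area(D)$.

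First I would dispose of the overlapping-sides case. If two sides of $D$ share a subarc of a line of $\widetilde\alpha$, then because distinct lines of $\widetilde\alpha$ cross transversally and at most in isolated points, the overlap forces the two sides to lie on the \emph{same} line $\ell$ of $\widetilde\alpha$. Walk along $\partial D$ until the two sides first diverge from $\ell$; the point where they leave $\ell$ is a vertex of $D$ lying in the interior of a side's carrying line, which is impossible for a vertex, unless in fact the two ``sides'' meet $\ell$ in a way that lets me cut $\widetilde D$ along an innermost subarc of $\ell$ inside $\widetilde D$ and split off a polygon with strictly fewer regions of $\widetilde\Sigma-\widetilde\alpha$ inside it. Concretely, an innermost arc of $\widetilde\alpha\cap\interior\widetilde D$ together with part of $\partial\widetilde D$ bounds a sub-monogon or sub-bigon of smaller combinatorial area. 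The key point is that $\widetilde D$ is a disc, so any properly embedded arc of $\widetilde\alpha$ in it separates it into two discs each of which is again bounded by subarcs of $\widetilde\alpha$ (and possibly $\partial\widetilde\Sigma$), hence is a polygon of $\alpha$; counting regions, at least one of the two pieces is a monogon or bigon and has strictly smaller area since the other piece contains at least one region.

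The identified-corner case is similar in spirit: if a vertex $v$ of $\widetilde D$ projects to a point that coincides with another point $w$ of the projected polygon $D$, then in $\widetilde\Sigma$ there is a lift $\widetilde w\in\widetilde D$ of $w$ distinct from $v$ but in the same $\pi_1\Sigma$-orbit, and $\widetilde w$ lies on $\partial\widetilde D$ or in $\interior\widetilde D$; using the deck transformation sending $v$ to $\widetilde w$ one transports a portion of $\partial\widetilde D$ inside $\widetilde D$, again producing a properly embedded arc of $\widetilde\alpha$ in the disc $\widetilde D$ and hence, by the same innermost-disc argument, a monogon or bigon of strictly smaller combinatorial area. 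I expect the main obstacle to be the bookkeeping in this last step: one has to verify that after cutting, the smaller piece really is a \emph{monogon or bigon} (not, say, a triangle that happens to have small area), which uses that $D$ started life as a $1$-gon or $2$-gon so the number of ``corners introduced by cutting'' is tightly controlled — an innermost arc cut on a monogon yields a monogon and a bigon, and on a bigon yields either two bigons or a monogon and a triangle, and in the latter case the monogon is the smaller piece. Throughout, I would lean on Proposition~\ref{prop:small} and the fact from~\cite{FHS82} that for taut $\alpha$ the lines of $\widetilde\alpha$ are embedded and pairwise cross at most once, to know that the only way non-tautness manifests is through these polygons, so such a minimal-area monogon or bigon exists in the first place.
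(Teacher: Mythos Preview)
The paper does not prove this lemma; it is quoted from the forthcoming work~\cite{NCS24+}, so there is no in-paper argument to compare your plan against.

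Your overall shape --- assume a minimal-area $\widetilde D$ is not small, then manufacture a strictly smaller monogon or bigon --- is the natural one, but the core step has a genuine gap, stemming from a conflation of the picture in $\widetilde\Sigma$ with the picture in $\Sigma$. In the overlapping-sides case you write that the overlap ``forces the two sides to lie on the same line $\ell$ of $\widetilde\alpha$''. It does not: $\widetilde D$ is by definition a \emph{simple} closed curve in $\widetilde\Sigma$, so its sides meet only at the vertices there. What ``sides overlap'' actually means is that two disjoint subarcs $s_1,s_2\subset\partial\widetilde D$ have the same image in $\Sigma$, i.e.\ there is a nontrivial deck transformation $g$ with $g(s_1)=s_2$. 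That hands you a translated disc $g(\widetilde D)$ sharing the boundary arc $s_2$ with $\widetilde D$; it does \emph{not} hand you an arc of $\widetilde\alpha$ properly embedded in $\interior\widetilde D$, which your ``innermost arc of $\widetilde\alpha\cap\interior\widetilde D$'' argument presupposes. The identified-corner case has the same defect: nothing you say forces $g(\partial\widetilde D)$ to enter $\widetilde D$ rather than sit outside it or engulf it. The work your plan omits is the analysis of the mutual position of $\widetilde D$ and $g(\widetilde D)$ --- same side of $s_2$ or opposite, nested or crossing --- and the extraction of a smaller monogon or bigon from each configuration (equal areas rules out strict nesting, for instance, but that has to be argued).

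There is also a concrete error in your closing bookkeeping. A properly embedded arc in a bigon with one endpoint on each side cuts it into two \emph{triangles}, not ``two bigons''; an arc with both endpoints on the same side cuts off a bigon and a \emph{quadrilateral}, not ``a monogon and a triangle''. So even granting an arc of $\widetilde\alpha$ inside $\widetilde D$, the case where it runs from one side to the other would not produce a smaller monogon or bigon by your cut, and the argument would stall.
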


\begin{remark}\label{rk:minimal area is innermost}
Note that a bigon with minimal combinatorial area is \emph{innermost} in the sense that it does not contain other bigons, but an innermost bigon may not have minimal combinatorial area and it may not be small.
\end{remark}

\begin{proof}
Let $B$ be a bigon of $\widetilde  \alpha$ with minimal combinatorial area. Then $B$ is innermost, and if $B$ is not small, then the two sides $s$ and $s'$ of $B$ overlap in  subarcs of $\alpha$ (they cannot overlap only at the endpoints because $\alpha$ is primitive).

Let $c$ and $c'$ be maximal subarcs of the sides with the same image in $\alpha$, and let $b$ and $d$ be their complementary arcs in $s$ and $s'$. These arcs inherit orientations from the orientation of $\alpha$.
The possible configurations of these arcs  depend on the relative orientations of $s$ and $s´$ given by the orientation of $\alpha$ and are illustrated in Figure~\ref{fig: twobigons}.

\begin{figure}[h]
\centering
\includegraphics[height=.8in]{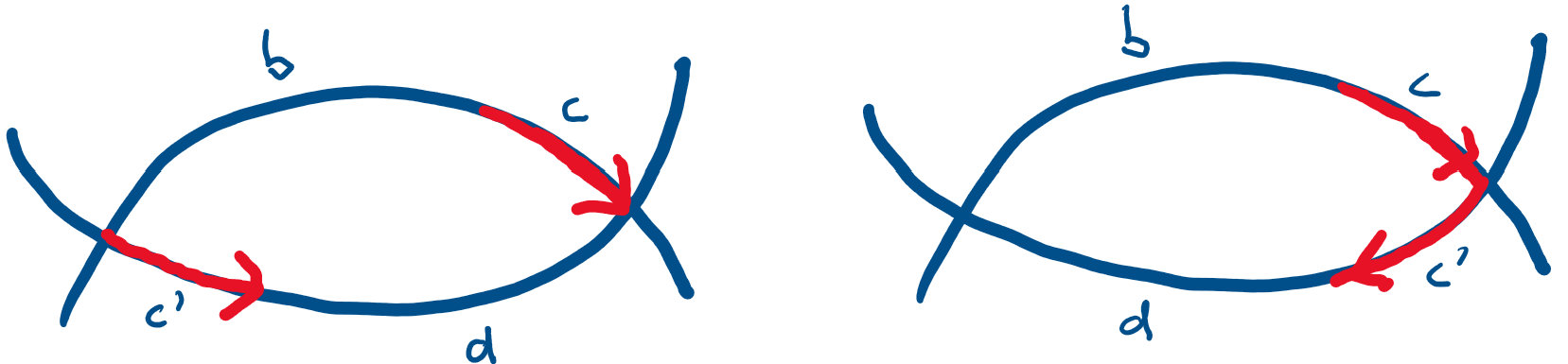}
\caption{Possible configurations of the maximal subarcs with the same image in $\alpha$.}
\label{fig: twobigons}
\end{figure}

If $s$ and $s'$ have opposite orientations then the covering translation taking $c$ to $c'$  has a fixed point, so we may assume that $s$ and $s'$  are oriented in the same way.
Now consider the quotient space $A= B / c \sim c'$ obtained by identifying $c$ and $c'$. Then $A$ is an annulus, and the image of $B$ in $\Sigma$ is a quotient of $A$. We give the images of the arcs $b$, $c$ and $d$ in $A$ the same names. Now
$bcd$ is an arc without corners in $A$. Let $a$ and $e$ be the arcs in the preimage of $\alpha$ in $A$ that lie before and after $bcd$
so that $abcde$ projects to an arc without corners on $\alpha$, as in Figure~\ref{fig: annulusbigon}.

\begin{figure}[h]
\centering
\includegraphics[height=1in]{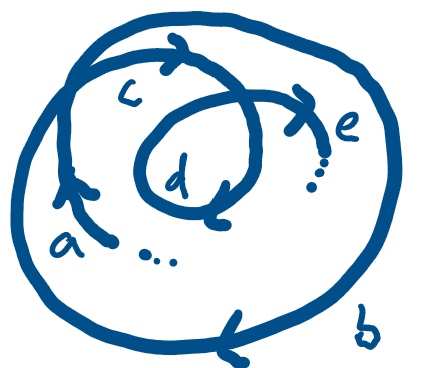}
\caption{The arc $abcde$.}
\label{fig: annulusbigon}
\end{figure}

We will show that the arc $abcde$ contains a bigon in $A$. Its location depends on whether or not $a$ and $e$ are embedded and how they intersect each other (the arc $c$ will not be used in the argument).

Observe that the arc $abcde$  cannot form monogons in $A$ by hypothesis (otherwise $\alpha$ would have them in $\Sigma$), so every simple subloop of $abcde$ goes once around $A$.
If $e$ is not embedded, let $e_1$ be the first subloop of $e$ that closes up after $d$, and let $e_0$ be the subarc of e before $e_1$.
If $e_1$ winds around $A$ in the same direction as $d$ then the arcs $de_0$ and $e_0e_1$ form a bigon in $A$.
If $e_1$ winds around $A$ in the opposite direction, then $e_0$ and $de_0e_1$ form a bigon in $A$.
See Figure~\ref{fig: notembedded annulus}. The same applies if $a$ is not embedded.

\begin{figure}[h]
\centering
\includegraphics[height=1in]{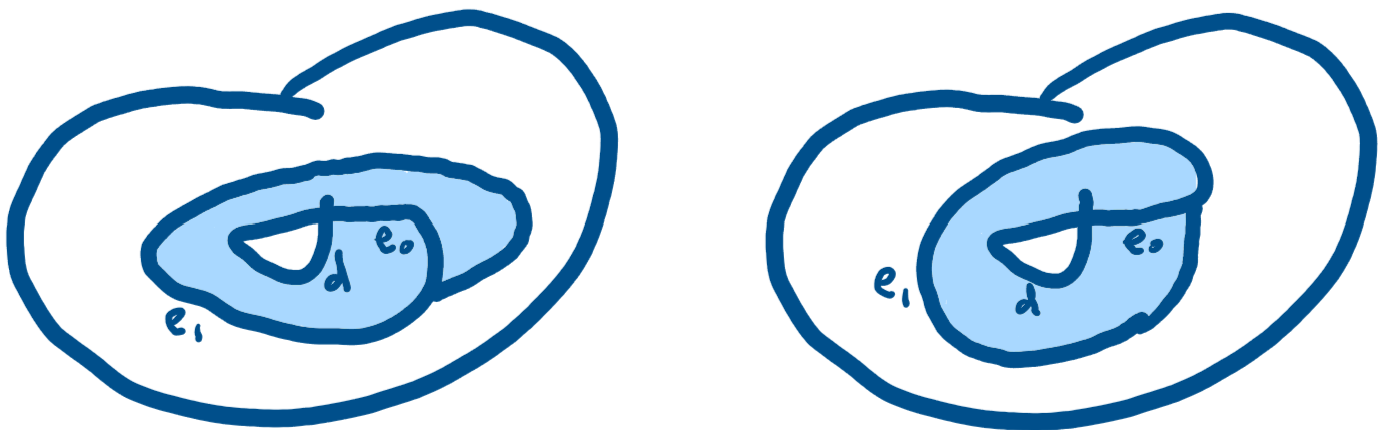}
\caption{Contradictions that arise if either $a$ or $e$ are not embedded.}
\label{fig: notembedded annulus}
\end{figure}

We can therefore assume that $a$ and $e$ are embedded. The arcs $a$, $c$ and $e$ can wind around $A$, and they may intersect each other multiple times, but as $e$ is simple, the arc $e$ does not wind around $A$.
We have four possible configurations depending on the position of $a$ (that of $c$ will be irrelevant):

 \begin{enumerate}
    
\item  If $a$ and $e$ are disjoint then $a$ and $de$ form a bigon in $A$, as otherwise $ab$ would form a monogon in $A$. See Figure~\ref{fig: 4cases},  leftmost and second to left.

\item If $e$ crosses $a$ in both directions then a subarc of $e$ between two consecutive crossings in opposite directions and the subarc of $a$ with the same endpoints bounds a bigon in $A$.

\item  If $a$ crosses $e$ always in the same direction as $b$, and $a_0$ is the initial subarc of $a$ that ends at the first intersection with $e$,
then $a_0$ and $de$ form a bigon. See Figure~\ref{fig: 4cases},  second to right.

\item If $a$ crosses $e$ always in the opposite direction as $b$, then the
and $a_1$ is the final subarc of $a$ that starts at the last intersection with $e$, then $a_1b$ and $e$ form a bigon in $A$.
then $a_0$ and $de$ form a bigon. See Figure~\ref{fig: 4cases},  rightmost.
\end{enumerate}

\begin{figure}[h]
\centering
\includegraphics[height=.8in]{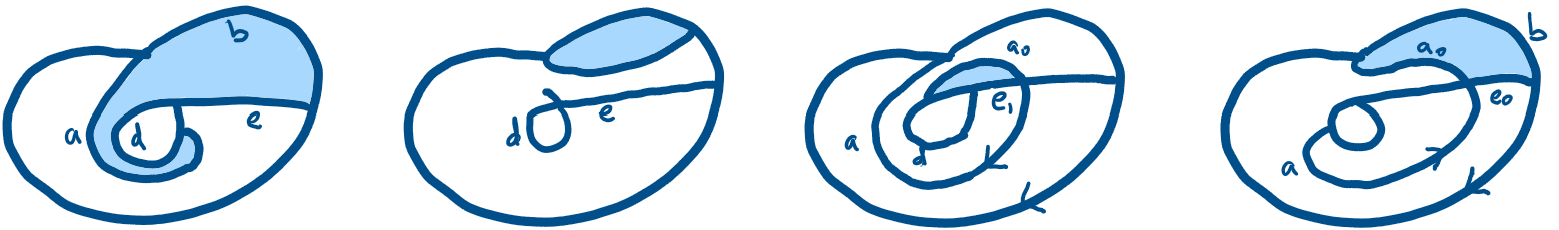}
\caption{The four possible configurations.}
\label{fig: 4cases}
\end{figure}
\end{proof}

\begin{definition}
    A quadrilateral of $\alpha$ with non-overlapping sides and two corners on the same vertex is a \emph{trapezium} if these corners are adjacent in the quadrilateral, and is a \emph{rhombus} if they are not adjacent.
\end{definition}

The next lemma will be used repeatedly throughout the paper.

\begin{lemma}\label{prop:pathologies}
    If $\alpha$ is a taut 1-manifold, then the small monogons and bigons that can arise in a smoothing of $\alpha$ come from triangles, rhombi or trapeziums of $\alpha$.
\end{lemma}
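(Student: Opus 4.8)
The plan is to analyze what happens to the preimage $\tilde\alpha$ in $\widetilde\Sigma$ when we smooth a double point $p$ of the taut $1$-manifold $\alpha$. Since $\alpha$ is taut, before the smoothing there are no monogons, bigons, or (in the free-endpoint case) half-bigons; the smoothing only alters $\tilde\alpha$ in the orbit of a small neighbourhood of the lifts of $p$, by reconnecting the two crossing strands in one of the two ways. So any small monogon or bigon $P$ that appears in the smoothed $1$-manifold must use one of the newly-created ``turns'' at (a lift of) $p$ — otherwise $P$ would already have been present as a monogon or bigon of $\tilde\alpha$, contradicting tautness. This localizes the problem: I want to reconstruct, from the small monogon or bigon $P$ of the smoothing, a polygon $D$ of the original $\alpha$, and show $D$ must be a triangle, rhombus, or trapezium.

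The key step is the reconstruction. Suppose the smoothing at $p$ replaces the two strands $s_1, s_2$ crossing at $p$ by two arcs $s_1', s_2'$ that ``cut the corner''; in the universal cover this corresponds to, at each lift $\tilde p$, reconnecting the four half-lines. A small monogon $P$ of the smoothed manifold is a simple loop; since it is not a monogon of $\tilde\alpha$, it traverses at least one reconnected corner. If I ``undo'' the smoothing along $P$ — i.e. push $P$ back across $\tilde p$ at each reconnected corner it uses — I produce a closed curve $D$ in $\widetilde\Sigma$ made of subarcs of $\tilde\alpha$ together with the crossing point(s) $\tilde p$ as its corners. Because $P$ is \emph{small} (embedded away from finitely many non-vertex double points, sides non-overlapping), and because the reconnection is supported near $\tilde p$, the curve $D$ is again a polygon of $\alpha$ whose sides do not overlap. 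Counting: a monogon $P$ has one corner, which is a turn; undoing one smoothing adds one genuine corner at $\tilde p$, so $D$ has the single turn-corner of $P$ plus one crossing corner, but a turn-corner of a smoothing is exactly a place where $D$ ``turns through'' $\tilde p$ as well, so in fact $P$ reconstructs to a polygon with two corners both lying over $p$, i.e. a bigon of $\alpha$ with both corners at the same vertex — and tautness forbids that unless the two sides between them bound extra structure, forcing $D$ to be a triangle or quadrilateral with two corners on $p$, i.e. a triangle, a trapezium, or a rhombus. The bigon case of $P$ is analogous but produces a polygon with one or two crossing-corners over $p$ depending on whether $P$ uses one or two reconnected turns: either a triangle (one side of $P$ across $\tilde p$, giving one new corner, plus the two corners of the bigon, but one of those is a turn that becomes a through-point) or a quadrilateral with two corners on $p$, which is a trapezium or rhombus by Definition~\ref{def:small ngons} and the preceding definition.

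The main obstacle I expect is bookkeeping the corner/turn count carefully and ruling out the remaining combinatorial possibilities — in particular making precise the claim that a ``turn'' created by the smoothing, when the smoothing is undone along $P$, contributes exactly one corner of $D$ at $\tilde p$, so that a monogon yields a polygon with $2$ corners on $p$ and a bigon yields one with $2$ or $3$ corners one or two of which are on $p$; and then showing that a bigon of $\alpha$ with both corners on the same vertex cannot be a bigon (tautness), so the smallest such $D$ with non-overlapping sides is a triangle with two corners on $p$ or a quadrilateral with two corners on $p$. I would also need to check the boundary case (half-bigons for arcs with free endpoints), which runs the same way with one side of $D$ replaced by a boundary arc — though rereading the statement of Lemma~\ref{prop:pathologies}, it only concerns monogons and bigons, so half-bigons can be deferred; and I should double-check, using Lemma~\ref{lem: minimalarea} or Lemma~\ref{lem: smoothing invariants htpy class}, that it suffices to treat the small ones. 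Everything else (that $P$ must use a reconnected corner, that the reconstruction stays within a single fundamental-domain-scale neighbourhood, that ``small'' is preserved) is routine once the local picture near $\tilde p$ is drawn, exactly as in Figure~\ref{fig: smoothingsdetail}.
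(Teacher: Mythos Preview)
Your approach is the same as the paper's: reconstruct from a small monogon or bigon of $\alpha'$ an $n$-gon $P$ of $\alpha$ by undoing the smoothing, then count corners. But your corner count is tangled, and that is where the argument actually lives.

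Here is the clean count. After smoothing at $v$, the point $v$ is no longer a crossing, so the vertex of a monogon (or the two vertices of a bigon) of $\alpha'$ is \emph{not} at $v$; it is a genuine crossing of $\alpha$. The only change in passing from the monogon/bigon back to $P$ is that each time the monogon/bigon runs along one of the two new smoothing arcs, you restore a corner of $P$ at $v$. Smallness (non-overlapping sides) means each of the two smoothing arcs is used at most once, so $P$ gains $k\in\{1,2\}$ corners at $v$; tautness of $\alpha$ forces $P$ to have at least $3$ sides. For a monogon: $1+k\ge 3$ forces $k=2$, so $P$ is a triangle with two corners at $v$. For a bigon: $2+k$ with $k\in\{1,2\}$ gives a triangle with one corner at $v$ or a quadrilateral with two corners at $v$, i.e.\ a rhombus or trapezium. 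That is the whole proof.

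Your monogon paragraph goes wrong at ``a turn-corner of a smoothing is exactly a place where $D$ turns through $\tilde p$ as well, so in fact $P$ reconstructs to a polygon with two corners both lying over $p$'': you are conflating the monogon's own vertex (which is not at $v$) with a smoothing corner, and then trying to salvage the count with ``unless the two sides between them bound extra structure'', which has no content. Likewise in the bigon paragraph, ``one of those is a turn that becomes a through-point'' is false --- the bigon's two vertices stay as corners of $P$. Finally, the worries at the end are misplaced: the lemma is \emph{about} small monogons and bigons by hypothesis, so there is nothing to reduce to, and half-bigons are not in the statement.
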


\begin{proof}
    Let $\alpha'$ be the result of smoothing $\alpha$ at a vertex $v$. If $\alpha'$ has a small monogon or bigon, it comes from an n-gon $P$ of $\alpha$. Since $\alpha$ is taut, $P$ has at least 3 sides and contains at least one corner of $v$; and since the monogon or bigon is small, it has no overlapping arcs, so $P$ uses each corner of $v$ at most once. Therefore $P$ contains either one corner or the two opposite corners of $v$, and these are not corners of the monogon or bigon of $\alpha'$. Thus, if $P$ becomes a monogon after the smoothing, then $P$ is a triangle with two corners at $v$, and if $P$ becomes a bigon after the smoothing, then $P$ is either a triangle one corner of $v$, or a quadrilateral with two corners at $v$.
\end{proof}

We emphasise a special case of Lemma~\ref{prop:pathologies}:

\begin{remark}\label{rmk:connectedsmooth4cases}
    If a smoothing $\alpha'$ connects two distinct components of a taut 1-manifold $\alpha$, or does not disconnect a connected 1-manifold, and if $\alpha'$ is not taut, then  it must contain a small monogon, bigon, or half-bigon.
\end{remark}

Remark~\ref{rmk:connectedsmooth4cases} does not hold  for a smoothing that increases the number of connected components of $\alpha$, as distinct components may form bigons without forming small bigons.

The reverse of the previous observation is also useful:

\begin{remark}\label{rmk:connectedsmooth4casesreverse}
    A smoothing of $\alpha$ at a corner of a triangle, or at the touching corners of a trapezium or a rhombus cannot be taut.
\end{remark}

 \begin{figure}[h]
\centering
\includegraphics[height=1in]{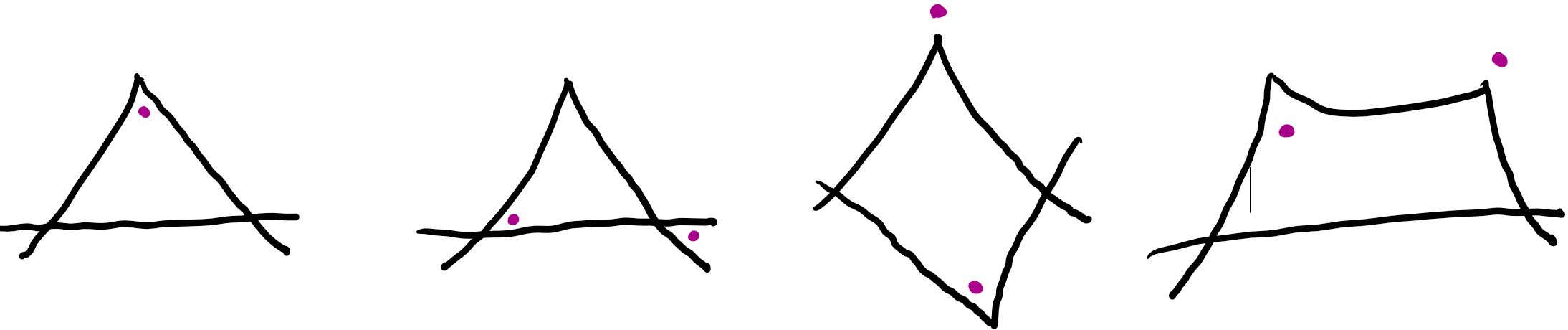}
\caption{The $4$ kinds of small monogons and bigons.}
\label{fig: smalls}
\end{figure}

All the pathologies in Lemma~\ref{prop:pathologies} can occur in taut curves, and in fact, there exist taut curves exhibiting any combination of them; Figure~\ref{fig: onlyoneforarcs} is an example.
As is the case for the curve in Figure~\ref{example maximal}, it can even happen that every self-intersection point is a vertex of a triangle, trapezium or rhombus.

The following simple corollary illustrates the applicability of Lemma~\ref{prop:pathologies}.

\begin{corollary}\label{cor:maincurvessmooth}
     Let $\Sigma$ be an orientable surface. If $\alpha$ is a taut curve on $\Sigma$ that admits a decomposition as $\alpha=\beta \gamma$ where $\beta$ and $\gamma$ intersect only at their endpoints, and no triangle of $\alpha$ has $p=\gamma \cap \beta$ as a vertex, then the smoothing at $p$ that separates $\alpha$ is taut.   
\end{corollary}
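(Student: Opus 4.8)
The plan is to apply Lemma~\ref{prop:pathologies} directly to rule out the three possible pathologies that could obstruct tautness of the smoothing at $p$. Let $\alpha'$ denote the result of the smoothing at $p=\gamma\cap\beta$ that separates $\alpha$ into the two curves $\beta$ and $\gamma$ (this is the smoothing of type~\eqref{smt 2}, which is unoriented since it disconnects the curve). Note that since $\beta$ and $\gamma$ intersect only at $p$, this smoothing really does produce the disjoint-ish union of the closed loops $\beta$ and $\gamma$ (they may still intersect each other elsewhere? no—$\beta$ and $\gamma$ meet only at their endpoints, which is $p$, so after the smoothing they become two curves meeting nowhere). Suppose for contradiction that $\alpha'$ is not taut. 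Then $\alpha'$ has a monogon or bigon; choosing one of minimal combinatorial area, Lemma~\ref{lem: minimalarea} tells us it is small, and Lemma~\ref{prop:pathologies} tells us it comes from a triangle, rhombus, or trapezium of $\alpha$ having $p$ as a vertex (a triangle with one or two corners at $p$, or a quadrilateral with two corners at $p$).

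The hypothesis immediately kills the triangle case: no triangle of $\alpha$ has $p$ as a vertex. So I need to eliminate the rhombus and trapezium cases as well, and this is where the decomposition $\alpha=\beta\gamma$ does the work. The key point is that at $p$, the four half-edges alternate between "$\beta$-side" and "$\gamma$-side" around the link of $p$ — more precisely, two opposite half-edges belong to $\beta$ and two opposite half-edges belong to $\gamma$ (reading $\alpha$ as $\beta$ followed by $\gamma$, the point $p$ is where $\beta$ ends and $\gamma$ begins, and also where $\gamma$ ends and $\beta$ begins). The separating smoothing $\alpha'$ is the one that joins the two $\beta$-half-edges to each other and the two $\gamma$-half-edges to each other; equivalently it is the unoriented smoothing at the unoriented corners that are the two corners of $p$ lying "between" a $\beta$-half-edge and a $\gamma$-half-edge. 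By Remark~\ref{rmk:connectedsmooth4casesreverse}, a smoothing at the touching corner of a trapezium, or at the touching (i.e. non-adjacent in the quadrilateral, but identified) corners of a rhombus, cannot be taut; but conversely, if a rhombus or trapezium of $\alpha$ with two corners at $p$ were responsible for the bigon of $\alpha'$, then the pair of corners of $p$ used by that quadrilateral would have to be exactly the pair at which we performed the smoothing. I would argue that the two corners of $p$ used by a trapezium (adjacent in the quadrilateral) or by a rhombus (opposite in the quadrilateral) must be the two corners through which the smoothing $\alpha'$ passes, and then show that in each case this forces a subloop of $\alpha$ — namely a proper sub-arc of the quadrilateral's boundary together with part of $\alpha$ — to be contained in $\beta$ or in $\gamma$ alone, which in turn produces a genuine triangle or bigon of $\alpha$ with $p$ as a vertex, contradicting either tautness of $\alpha$ or the hypothesis.

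Let me be more careful about the structure of that last step, as it is the main obstacle. The cleanest route is probably: a small bigon $B$ of $\alpha'$ arising from a quadrilateral $Q$ of $\alpha$ with two corners at $p$ has, as its two sides, two sub-arcs $s_1,s_2$ of $\alpha'$; each $s_i$ is a sub-arc of $\alpha'$ lifting a sub-arc of $\alpha$ that turns the corner at (a copy of) $p$. Because the smoothing joins $\beta$-ends to $\beta$-ends and $\gamma$-ends to $\gamma$-ends, each $s_i$, being a sub-arc of $\alpha'$ passing through a copy of $p$, lies entirely within $\beta$ or entirely within $\gamma$ in a neighbourhood of that copy of $p$; tracking which corner of $Q$ this corresponds to shows $Q$'s two corners at $p$ are precisely the pair "cut" by the separating smoothing. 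Now undo the smoothing: $Q$ is then seen to be built from an arc of $\beta$ and an arc of $\gamma$ (or an arc of one of them together with a return through $p$), and the bigon $B$ of $\alpha'$ re-assembles in $\alpha$ into either a triangle with a vertex at $p$ (contradicting the hypothesis) or a bigon of $\alpha$ (contradicting tautness of $\alpha$ via Proposition~\ref{prop:small}). I expect the bookkeeping of exactly which sub-arcs belong to $\beta$ versus $\gamma$, and verifying that re-inserting the deleted neighbourhood $N_\epsilon(p)$ genuinely closes up a triangle or bigon of $\alpha$, to be the only delicate part; everything else is a direct citation of Lemma~\ref{prop:pathologies}, Lemma~\ref{lem: minimalarea}, and Proposition~\ref{prop:small}.
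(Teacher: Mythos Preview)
Your overall strategy is the same as the paper's: invoke Proposition~\ref{prop:small} and Lemma~\ref{prop:pathologies} to reduce to the triangle/rhombus/trapezium trichotomy, kill triangles by hypothesis, and then use the decomposition $\alpha=\beta\gamma$ to kill quadrilaterals. Two minor points and one real gap.

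\emph{Minor.} The disconnecting smoothing is the \emph{oriented} one (see Remark~\ref{rmk:orientations and connectedness}), not the unoriented one; this has no bearing on the argument. More importantly, Proposition~\ref{prop:small} and Lemma~\ref{lem: minimalarea} are stated for a single curve or arc, not for multi-curves (and the remark after Proposition~\ref{prop:small} warns that it can fail for multi-curves). You already observed that $\beta'$ and $\gamma'$ are \emph{disjoint}, so any monogon or bigon of $\alpha'$ lies entirely in one component and you can apply Proposition~\ref{prop:small} to that component; you should make this step explicit rather than applying the lemmas to $\alpha'$ as a whole.

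\emph{The gap.} Your elimination of the rhombus/trapezium is not correct as written. You claim that the bigon $B$ ``re-assembles in $\alpha$ into either a triangle with a vertex at $p$ or a bigon of $\alpha$''. But Lemma~\ref{prop:pathologies} already tells you what $B$ re-assembles into: the quadrilateral $Q$ itself. It does not collapse further to a triangle or bigon of $\alpha$, so neither of your proposed contradictions fires. The correct argument is much simpler and you already have the ingredient for it. The two corners of $p$ used by $Q$ are the two \emph{opposite} smoothing corners, and for the separating smoothing these are precisely the ``$\beta$-corner'' (between the two $\beta$-half-edges) and the ``$\gamma$-corner'' (between the two $\gamma$-half-edges). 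Hence the sides of $Q$ adjacent to one of these corners lie in $\beta$ and the sides adjacent to the other lie in $\gamma$. For a rhombus, the two non-$p$ vertices of $Q$ are then crossings of a $\beta$-arc with a $\gamma$-arc; for a trapezium, the side of $Q$ joining the two $p$-corners would have to run from $\beta$ to $\gamma$. Either way you need $\beta$ and $\gamma$ to meet away from $p$, contradicting the hypothesis --- or equivalently, after the smoothing the ``bigon'' would have to involve both of the disjoint curves $\beta'$ and $\gamma'$. This is exactly the paper's one-line argument: such a quadrilateral is impossible because $\gamma$ does not intersect any subarc of $\beta$.
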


In particular, if $\gamma$ is simple, the smoothing produces a taut curve with one intersection less than $\alpha$ (after deleting a simple curve component).

\begin{proof}
   The smoothing at $p$ that separates $\alpha$ produces two disjoint curves $\beta'$ and $\gamma'$. If $\beta'$ has a monogon or bigon then by Proposition~\ref{prop:small} it must have a small monogon or bigon, and by Lemma~\ref{prop:pathologies} this must come from a triangle, a trapezium, or a rhombus of $\alpha$. But by hypothesis, there are no triangles with vertex $p$ in $\alpha$, and there cannot be any rhombi or trapeziums with vertex $p$ because they would have to use two opposite corners of $\alpha$, and $\gamma$ does not intersect any subarc of $\beta$. For the same reason $\gamma$ cannot have monogons or bigons. There are also no bigons between $\beta$ and $\gamma$ because both curves are disjoint, so the proof is complete. 
\end{proof}

\subsection{Using hyperbolic geometry}

Since the existence of taut smoothings for a taut 1-manifold $\alpha$ depends only on the free homotopy class of $\alpha$, a plausible strategy for finding taut smoothings is to choose taut representatives with special properties. If the components of $\alpha$ are non-parallel, then $\alpha$ is freely homotopic to a geodesic 1-manifold
for some hyperbolic metric on $\Sigma$, and we can choose a smoothing in a corner with a maximal hyperbolic angle. We shall see that this works in many cases, but not always.

\begin{theorem}\label{prop:taut pair}
  Let $\Sigma$ be an orientable surface with $\chi(\Sigma)<0$. If a primitive, taut 1-manifold $\alpha$ in $\Sigma$ has at least $2$ non-disjoint components, then $\alpha$ has a smoothing that is taut fixing its endpoints.
\end{theorem}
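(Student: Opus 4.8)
The plan is to pass to a hyperbolic metric and argue by chasing a maximal angle. Since $\alpha$ is primitive, taut, and has non-parallel components (tautness and primitivity force non-homotopic components among those that intersect), it is freely homotopic to a geodesic $1$-manifold for some complete hyperbolic metric on $\Sigma$; by Lemma~\ref{lem: smoothing invariants htpy class} it suffices to find a taut smoothing of this geodesic representative, which I will also call $\alpha$. Now consider, among all self-intersection and inter-component intersection points of $\alpha$, a corner $c$ at which the hyperbolic angle is maximal — more precisely, among the two ``unoriented-type'' choices of smoothing at each point, pick one realising the largest angle between the two geodesic arc-germs. The key geometric fact (which is exactly the content of the ``rounding-out corners reduces length'' philosophy, and can be proved directly in $\mathbb{H}^2$) is that a monogon or bigon of geodesics must contain a corner whose angle is strictly smaller than $\pi$ in a controlled way; more usefully, a geodesic bigon in $\mathbb{H}^2$ has \emph{both} its angles strictly less than $\pi$, and a geodesic monogon has its single angle strictly less than $\pi$ as well, with the sum of exterior angles exceeding $2\pi$ by Gauss–Bonnet.

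The heart of the argument is then: suppose the smoothing $\alpha'$ at the maximal corner $c$ is not taut. By Remark~\ref{rmk:connectedsmooth4cases}, since $c$ lies at an intersection of two distinct components or at a self-intersection of a single component and the smoothing is of unoriented type between two components (hence connects them and does not increase component count — here is where the hypothesis of two non-disjoint components enters, via Remark~\ref{rmk:orientations and connectedness}, guaranteeing we can select an unoriented smoothing that merges components rather than splitting one), $\alpha'$ contains a small monogon, bigon, or half-bigon. By Lemma~\ref{prop:pathologies} this pathology comes from a triangle, rhombus, or trapezium $P$ of $\alpha$ with a corner (or two opposite corners) at $c$. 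I now look at $P$ as a geodesic polygon in $\mathbb{H}^2$ (lifting to $\widetilde\Sigma$): it is a geodesic triangle or quadrilateral one of whose vertices is the lift of $c$. The angle of $P$ at the $c$-vertex is the angle \emph{opposite} to the corner we smoothed, which is the oriented angle — but by the remark in Subsection~\ref{subsec: repel}, the oriented and unoriented angles at a point are supplementary, so the angle of $P$ at $c$ equals $\pi$ minus our maximal angle, i.e.\ it is the \emph{smallest} such supplementary angle over all intersection points. Comparing with Gauss–Bonnet for the geodesic $n$-gon $P$ (whose angle sum is strictly less than $(n-2)\pi$), and using that every \emph{other} vertex of $P$ contributes an angle that is at least as large as the $c$-angle by maximality, yields a numerical contradiction: a geodesic triangle with all three angles $\ge \pi - \theta_{\max}$ together with $\theta_{\max}$ being maximal forces $\theta_{\max}$ too large, violating that at the corner we smoothed the angle was the largest.

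The main obstacle I anticipate is making the angle bookkeeping airtight across the three polygon types and the three pathology types (monogon from a triangle with two corners at $c$; bigon from a triangle with one corner at $c$; bigon from a rhombus or trapezium with two corners at $c$), since in the ``two corners at $c$'' cases one must track which corners of the geodesic polygon are the ones corresponding to the maximal-angle choice and which are their supplements, and the half-bigon case for arcs requires a version of Gauss–Bonnet with a boundary geodesic arc (using convexity of $\partial\Sigma$, or just working in the double). A secondary subtlety is the selection step: one must choose the metric and the maximal corner so that the smoothing it prescribes is genuinely of the component-merging unoriented type; this is where it is essential that $\alpha$ has at least two non-disjoint components, because for a single curve the only unoriented smoothing at a self-crossing is the one that might disconnect it, and the maximal-angle corner need not be the right one — precisely the failure illustrated by the examples the authors flag (Figure~\ref{example maximal}), which is why the single-curve case is left to the later, combinatorial sections. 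I would handle the arc endpoints by first doubling $\Sigma$ along $\partial\Sigma$ to reduce to the case of curves on a surface without boundary, noting that tautness rel boundary and the classification of small pathologies are compatible with doubling.
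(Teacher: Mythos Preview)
Your strategy—pass to a hyperbolic representative and smooth at a corner of maximal angle—is indeed the paper's, but the execution has real gaps.

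The central one is the trapezium case, tied to your choice of which corners to maximise over. You maximise over \emph{all} unoriented corners; with that choice the trapezium obstruction is genuinely fatal, exactly as the paper explains in the paragraphs following this proof and in Proposition~\ref{prop:nomaxmetric}. The fix is not a more careful Gauss--Bonnet count but a different selection: the paper maximises only over unoriented corners \emph{between two distinct components}, and then uses orientability to exclude trapeziums outright—a trapezium has two \emph{adjacent} corners at the smoothed vertex $v$, so the side joining them is a loop based at $v$ lying in a single component, contradicting that the two branches at $v$ come from different components. Your Gauss--Bonnet sketch cannot replace this, and your angle bookkeeping is also off: the interior angle of $P$ at the smoothed vertex is the smoothing angle $\theta_{\max}$ itself, not its supplement. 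There is a second, independent gap. Proposition~\ref{prop:small} and Remark~\ref{rmk:connectedsmooth4cases} only guarantee a \emph{small} monogon or bigon for a single curve or arc; here $\alpha'$ still has other components, and the merged component $\alpha_1'$ can form \emph{large} bigons with them. The paper handles this separately: since $\Sigma$ is orientable, $\alpha_1'$ lifts to a zigzag of geodesic arcs turning alternately left and right, so an innermost bigon between it and another (geodesic) component must be a triangle, which has already been ruled out. Finally, your claim that intersecting taut primitive components are automatically non-homotopic is false—two copies of a non-simple primitive curve meet in a positive minimal number of points—and the paper handles this case by passing to a reduced $1$-manifold $\alpha_r$ with one representative per homotopy class and transferring the argument back via the correspondence of $n$-gons.
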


\begin{remark}
    Theorem~\ref{prop:taut pair} does not hold on a torus: for a taut manifold consisting of two simple curves $\alpha$ an $\beta$ that intersect more than once, the smoothings of $\alpha \cup \beta$ produce curves that are non-simple, and for most choices of $\alpha$ an $\beta$ they are primitive. Thus, they cannot be   taut. 
\end{remark}

\begin{proof}
    For simplicity, we will give a detailed proof when $\alpha$ is a collection of curves, and at the end we indicate what to do if $\alpha$ has arc components.

    If $\alpha$ is a taut collection of primitive, non-homotopic curves, then $\alpha$ can be homotoped to a collection $\alpha_h$ of (primitive, non-homotopic) hyperbolic geodesics,
    and by Lemma~\ref{lem: smoothing invariants htpy class}
    the taut smoothings of $\alpha_h$ correspond to the taut smoothings of $\alpha$. We claim that a smoothing $\alpha'$ of $\alpha_h$ at an unoriented corner forming a maximal angle between two distinct components is taut.

    To prove that $\alpha'$ is taut we need to show that the component $\alpha_1'$ that contains the 2 corners of the smoothing self-intersects minimally and that $\alpha_1'$ intersects every other component of $\alpha'$ minimally.
    
    If $\alpha_1'$ does not have minimal self-intersections then by Proposition~\ref{prop:small} it must have a small monogon or bigon. 
    If the monogon or bigon comes from a triangle $T$, then a corner of the smoothing corresponds to an interior angle of $T$, but as $T$ is a hyperbolic triangle the other two exterior angles of $T$ are larger, and at least one of them is the corner of two distinct components of $\alpha_h$, contradicting our initial hypothesis.

    If the bigon comes from a quadrilateral $Q$, then it must be a rhombus rather than a trapezium, since a trapezium has two consecutive vertices that map to the same vertex in $\Sigma$, and as the surface is orientable, the arcs that meet at this vertex correspond to the same component of $\alpha_h$.
    Also, since the surface is orientable, the two corners of $Q$ that do not correspond to the smoothing must be corners of different components of $\alpha_h$, so their complementary corners are also corners of different components. Since $Q$ is a hyperbolic quadrilateral, the angle at one of these corners must be larger than the angle at $c$, contradicting again our initial hypothesis. See Figure~\ref{fig: anglecases}, second row.

    The previous argument shows that $\alpha_1'$ is taut and also that $\alpha_1'$ does not form small bigons with other components of $\alpha'$. We still need to show that $\alpha_1'$ cannot form large bigons with any other component of $\alpha'$.
    If $\alpha_1'$ and another component $\alpha_2$ form a bigon, then it lifts to a bigon $B$ in $\widetilde \Sigma$ whose sides lie on two preimages $\widetilde \alpha_1'$ and $\widetilde\alpha_2$. 
    Since $\widetilde\alpha_2$ is a geodesic line, $\alpha_1'$ is made of arcs of two components of $\alpha$, and because $\Sigma$ is orientable, then $\widetilde \alpha_1'$ is a zigzag of geodesic arcs that turn left and right alternately.
    Now an innermost bigon $B$ of this type must be a triangle, as otherwise the side of $B$ on $\widetilde \alpha_1'$ would have a corner that points out of $B$, and the opposite corner would lie on another preimage of $\alpha_1'$ that forms a bigon $B'$ with $\widetilde \alpha_2$ that is contained in $B$.
    But we already showed that a smoothing at a corner with maximal angle cannot form a bigon that comes from a triangle, so we are done.

    If $\alpha$ has some freely homotopic components, let $\alpha_r$ be the reduced 1-manifold that has one curve from each homotopy class in $\alpha$. Then $\alpha_r$ is taut and is homotopic to a collection of (primitive and non-homotopic) hyperbolic geodesics. 
    The previous argument shows that $\alpha_h$ (and therefore $\alpha_r$) has a taut smoothing, provided that $\alpha_h$ (and therefore $\alpha_r$) has some non disjoint components. 
    Assume that this is the case, and let us prove that a corresponding smoothing on $\alpha$ is taut. Since the lines of $\widetilde \alpha$ run parallel to the lines of $\widetilde \alpha_r$ in $\widetilde \Sigma$ (each line of $\widetilde \alpha$  has the same points at infinity as a line of $\widetilde \alpha_r$, and the lines of $\widetilde \alpha$ with the same points at infinity are disjoint) so each corner of $\alpha$ determines a corner of 
    $\alpha_r$, and each n-gon of $\widetilde \alpha$ determines an n-gon of $\widetilde \alpha$.
    Therefore, if a smoothing of $\alpha$ produces a monogon or bigon then the corresponding smoothing  in $\alpha_r$ also produces a monogon or bigon.

    If all the components of $\alpha_r$ are disjoint, two non-disjoint components of $\alpha$ must be homotopic to a single non-simple component of $\alpha_r$. 
    In the corresponding non-simple component of $\alpha_h$, take an unoriented corner $c$ with largest hyperbolic angle, and perform a smoothing of $\alpha$ at a corner between two different components corresponding to $c$. If the result of this smoothing is not taut then it must contain a monogon or bigon. 
    We proved before that if a smoothing between two distinct curves creates monogons of bigons then it must create small ones that come from triangles or rhombi. But the triangles and rhombi of $\alpha$ correspond to triangles and rhombi of $\alpha_r$, which correspond to triangles and rhombi of $\alpha_h$, and a smoothing of $\alpha_h$ at a corner with maximal hyperbolic angle can only create bigons from trapeziums. This contradiction shows that the smoothing of $\alpha$ is taut.

    The same proof works when some or all of the components of $\alpha$ are arcs.
    If $\alpha$ does not contain homotopic curves then $\alpha$ is freely homotopic to a collection $\alpha_h$ of hyperbolic geodesic curves and arcs with the same endpoints as those in $\alpha$, and a smoothing of $\alpha_h$ at an unoriented corner of maximal hyperbolic angle is taut and so the corresponding smoothing of $\alpha$ is taut. If $\alpha$ contains some homotopic curves, consider a collection $\alpha_r$ that has all the arcs of $\alpha$ and one component of each homotopy class of curves, so $\alpha_r$ is freely homotopic to a collection $\alpha_h$ of hyperbolic geodesic curves and arcs.

    Then, as before, we can do a smoothing at a corner between two non-homotopic components of $\alpha$  corresponding to a corner with maximal hyperbolic angle between two components of $\alpha_h$. If all non-homotopic components of $\alpha$ are disjoint, then two of them are homotopic and non-simple, and we can do a smoothing between them corresponding to a corner with maximal hyperbolic angle in the corresponding component of $\alpha_h$.
    \end{proof}

 \begin{figure}[h]
\centering
\includegraphics[height=2.4in]{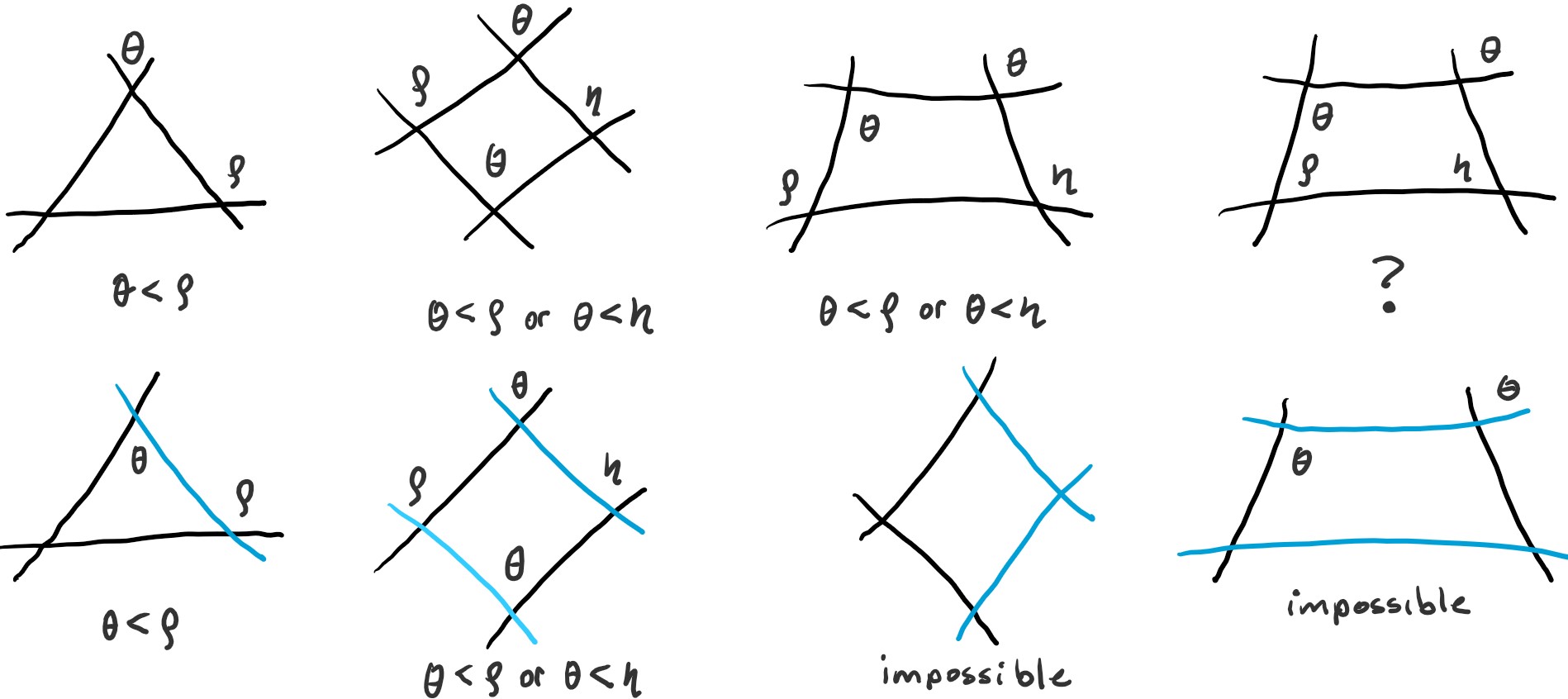}
\caption{Possible and impossible scenarios in the proof of Theorem~\ref{prop:taut pair}.}
\label{fig: anglecases}
\end{figure}

The reader might wonder why this proof fails in the case of a single curve or arc, so let us point out the problem. If $\alpha$ has a single component, we may try to mimic the proof of Theorem~\ref{prop:taut pair}, and choose an unoriented smoothing with maximal hyperbolic angle, so the result has a single component $\alpha'$. If $\alpha'$ is not taut, it must contain a small monogon or bigon $P$ by Proposition~\ref{prop:small}.
Up to reorientation of $\alpha$, and as $\Sigma$ is oriented, there are exactly 5 possibilities for $P$, all shown in the top row of Figure~\ref{fig: anglecases} (really there are $2$ cases corresponding to a triangle -- for one of them two of the internal angles will be equal, but this does not affect the argument).

Since the triangles and quadrilaterals are hyperbolic, the sum of the interior angles of $P$ is either $\pi$ or $2\pi$, so one of the exterior angles is larger than the one chosen for the smoothing, and in the first $4$ cases it corresponds to an unoriented corner, contradicting our choice. But in the last case (a trapezium with 4 unoriented corners) the exterior angle that can be shown to be larger then the original angle corresponds to an oriented corner, so a priori there is no contradiction to the choice of a corner with maximal unoriented angle.
Thus, the smoothings with maximal unoriented angle may not be taut. 

Nevertheless, the discussion above recovers the following corollary:

\begin{corollary}\label{cor:nomomogons}
    Let $\Sigma$ be an orientable surface with $\chi(\Sigma)<0$.   If $\alpha$ is a 1-manifold in $\Sigma$ with  primitive curve components  and $\alpha$ is taut fixing its endpoints, then $\alpha$ has a smoothing that has no monogons. 
\end{corollary}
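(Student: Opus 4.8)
The plan is to recycle the hyperbolic-angle argument from the proof of Theorem~\ref{prop:taut pair}, exploiting the remark made just after it: the one configuration that argument cannot rule out --- a trapezium with four unoriented corners --- produces a \emph{bigon}, never a monogon. First I would dispose of the case in which $\alpha$ has two non-disjoint components: there Theorem~\ref{prop:taut pair} already supplies a smoothing of $\alpha$ that is taut, hence has no monogons. So I may assume the components of $\alpha$ are pairwise disjoint; if $\alpha$ is simple there is nothing to smooth, so some component self-intersects and every double point of $\alpha$ is a self-intersection. Discarding all but one curve from each parallel family (these are disjoint simple curves, left untouched; a self-intersecting component of a taut $1$-manifold admits no disjoint parallel copy) and homotoping, I would then --- exactly as in the proof of Theorem~\ref{prop:taut pair}, via Lemma~\ref{lem: smoothing invariants htpy class} --- replace $\alpha$ by a hyperbolic geodesic $1$-manifold, so that it suffices to produce a monogon-free smoothing of this representative. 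Now I would pick an unoriented corner $c$, sitting at a self-intersection of some component $\alpha_0$, whose hyperbolic angle $\theta$ is maximal among all such corners (the two unoriented corners at any double point are opposite, so have equal angle), and let $\alpha'$ be the smoothing of $\alpha$ at $c$. By Remark~\ref{rmk:orientations and connectedness}, $\alpha'$ agrees with $\alpha$ except that $\alpha_0$ is replaced by a single curve $\alpha_0'$; the other components are unchanged geodesics, still disjoint from $\alpha_0'$ and monogon-free, so a monogon of $\alpha'$ would be a monogon of $\alpha_0'$.

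The heart of the argument is to rule out a monogon in $\alpha_0'$. If there were one, then (Lemma~\ref{lem: minimalarea}) a monogon of minimal combinatorial area would be small, and (Lemma~\ref{prop:pathologies}) it would be the image of a triangle $T$ of $\alpha$ having two corners at the smoothed vertex $v$, namely the two corners merged by the smoothing at $c$. Each of these projects to $c$ or to its opposite unoriented corner, so both are unoriented with hyperbolic angle $\theta$; as $T$ is a geodesic triangle in a hyperbolic surface, its third angle $\psi$ then satisfies $2\theta + \psi < \pi$. A triangle of the single oriented curve $\alpha_0$ has either zero or two unoriented corners, and the two at $v$ are unoriented, so the corner of $T$ at its third vertex $w$ is oriented; hence the two corners at $w$ adjacent to it are unoriented, each with hyperbolic angle $\pi - \psi > 2\theta > \theta$. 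Since these are unoriented corners at a self-intersection of $\alpha_0$, this contradicts the maximality of $\theta$, and I would conclude that $\alpha'$ has no monogons.

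I expect the orientation bookkeeping in the second paragraph to be the only real obstacle: it is precisely because the offending polygon $T$ is a \emph{triangle} --- not the trapezium that blocks the full statement in Figure~\ref{fig: anglecases} --- that the enlarged corner one produces is unoriented and therefore competes for the maximal unoriented hyperbolic angle. The remaining step, the reduction to a hyperbolic geodesic representative, would be carried out exactly as in the proof of Theorem~\ref{prop:taut pair}; as there, one should verify that ``producing a monogon'' (and not merely ``failing to be taut'') is transported correctly between corresponding smoothings of freely homotopic taut $1$-manifolds.
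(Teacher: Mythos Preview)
Your proposal is correct and follows essentially the same route as the paper's own argument. The corollary is stated immediately after the discussion explaining why the maximal-unoriented-hyperbolic-angle strategy from Theorem~\ref{prop:taut pair} fails for tautness but succeeds for ``no monogons'': the only obstruction is the trapezium with four unoriented corners, which yields a bigon, not a monogon. Your write-up simply unpacks this, first invoking Theorem~\ref{prop:taut pair} for non-disjoint components and then running the single-component argument; your parity observation (that a triangle of an oriented curve or arc has an even number of unoriented corners, so the third corner at $w$ is oriented and hence the adjacent corners at $w$ are unoriented with angle $\pi-\psi>\theta$) is exactly the orientation bookkeeping the paper packages into the phrase ``in the first 4 cases it corresponds to an unoriented corner'' and Figure~\ref{fig: anglecases}. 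The one point you flag --- that ``has a monogon'' must transport along the correspondence of Lemma~\ref{lem: smoothing invariants htpy class} --- is indeed needed and does hold: the triangle-move homotopy between taut representatives commutes with smoothing via Reidemeister II/III type moves only (as in Figure~\ref{fig: commute}), and these never create or destroy monogons.
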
 

Corollary~\ref{cor:nomomogons} was previously obtained by D. Thurston, who proved it without the primitivity assumption, and used it in~\cite{ThurstonD14} to show that the \emph{bracelet basis} is a  positive bases for the skein algebra $Sk(\Sigma)$.

 If we pick the maximal angle without regard for orientations, the preceding analysis allows us to deduce that there is a smoothing with taut components (but note that, in general, we cannot say anything about how the components intersect each other).

\begin{corollary}\label{cor:tautcomponents}
    Let $\Sigma$ be an orientable surface with $\chi(\Sigma)<0$.   If $\alpha$ is a primitive taut curve, or an arc that is taut fixing its endpoints, then $\alpha$ has a smoothing for which each connected component is taut. 
\end{corollary}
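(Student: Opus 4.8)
The plan is to adapt the argument in the proof of Theorem~\ref{prop:taut pair}, but with the maximal hyperbolic angle chosen among \emph{all} corners rather than only among unoriented ones. Since $\alpha$ is primitive, it is freely homotopic to a geodesic curve or arc $\alpha_h$ for some hyperbolic metric on $\Sigma$; by Lemma~\ref{lem: smoothing invariants htpy class} it suffices to produce the desired smoothing of $\alpha_h$. Pick a corner $c$ of $\alpha_h$ realising the maximal hyperbolic angle over all corners at all double points, and let $\alpha'$ be the smoothing of $\alpha_h$ at $c$. We want to show that each connected component of $\alpha'$ is taut, i.e.\ self-intersects minimally (and, if it is an arc, is taut fixing its endpoints).

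The key step is: if some component $\alpha_i'$ of $\alpha'$ is not taut, then by Proposition~\ref{prop:small} it contains a small monogon or bigon, and by Lemma~\ref{prop:pathologies} this arises from a triangle, rhombus, or trapezium $P$ of $\alpha_h$ having (one or two opposite) corners equal to $c$. Now I invoke hyperbolic geometry: $P$ is a hyperbolic polygon, so the sum of its interior angles is strictly less than $(n-2)\pi$ (namely $<\pi$ for a triangle, $<2\pi$ for a quadrilateral). The interior angles of $P$ that sit at the smoothed vertex are at most the angle at $c$ by maximality; a short arithmetic argument (exactly as in the top row of Figure~\ref{fig: anglecases}, but now \emph{without} having to track orientations) then shows that some other corner of $P$ — one of the ``exterior'' corners at the vertices of $P$ — has hyperbolic angle strictly larger than the angle at $c$. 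This contradicts the maximality of $c$. The point of dropping the orientation constraint is precisely that the one problematic case in the discussion after Theorem~\ref{prop:taut pair} — the trapezium with four unoriented corners, where the larger exterior angle was an \emph{oriented} corner — is no longer an obstruction, since we are now allowed to compare against oriented corners too.

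To finish, I must also rule out \emph{non-small} bigons formed by $\alpha_i'$ with itself or with another component $\alpha_j'$ of $\alpha'$. Here I follow the innermost-bigon zigzag argument at the end of the proof of Theorem~\ref{prop:taut pair}: lift to $\widetilde\Sigma$, where $\widetilde\alpha_i'$ (resp.\ $\widetilde\alpha_j'$) is a concatenation of geodesic arcs of the preimages of $\alpha_h$ with alternating turns (orientability of $\Sigma$ is used here), so an innermost such bigon, having no inward-pointing corner on one side, must in fact be a triangle of $\alpha_h$ with a corner at the smoothed vertex — and that was excluded by the angle argument above. If $\alpha$ (hence $\alpha_h$) contains homotopic curve components, I reduce to the non-homotopic case via $\alpha_r$ exactly as in Theorem~\ref{prop:taut pair}: the lines of $\widetilde\alpha$ run parallel to those of $\widetilde\alpha_r$, so polygons of $\alpha$ correspond to polygons of $\alpha_r$ and hence to hyperbolic polygons of $\alpha_h$, and the same angle contradiction applies. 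For the arc case, $\alpha_h$ is taken to be a hyperbolic geodesic with the same endpoints (convex boundary is not needed since we are only asserting that components are taut, and we still only ever need that primitive geodesics meet transversally and that a hyperbolic polygon has angle defect).

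\textbf{Main obstacle.} The delicate point is the careful bookkeeping of which corner of the polygon $P$ can be shown to exceed the angle at $c$, and verifying that it is genuinely a corner of $\alpha_h$ available for comparison (rather than, say, a vertex identified under the projection or a corner lying ``outside'' the polygon). Enumerating the finitely many configurations of $P$ — triangle with one smoothed corner, triangle with two smoothed corners, rhombus, trapezium — and checking the angle inequality in each (this is where Figure~\ref{fig: anglecases} does the real work) is the crux; once orientations are dropped, however, every case yields a strictly larger angle, so the contradiction goes through uniformly.
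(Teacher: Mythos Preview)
Your core argument is correct and is exactly the paper's approach: pick the corner with maximal hyperbolic angle among \emph{all} corners (not just unoriented ones), smooth there, and note that the five configurations in Figure~\ref{fig: anglecases} each yield an exterior corner with strictly larger angle, giving a contradiction. Once orientations are dropped, the trapezium case is no longer problematic, and by Proposition~\ref{prop:small} and Lemma~\ref{prop:pathologies} each component of the smoothing is taut.

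However, several of your additional steps are unnecessary or misapplied. First, the corollary only asserts that each \emph{component} is taut; it says nothing about how the components intersect one another (the paper explicitly remarks on this). So your effort to rule out bigons between $\alpha_i'$ and $\alpha_j'$ is superfluous. Second, the reduction via $\alpha_r$ for homotopic components is vacuous here: by hypothesis $\alpha$ is a single curve or arc, so there are no parallel components to collapse. Third, the zigzag argument you borrow from Theorem~\ref{prop:taut pair} relied on the smoothing joining two \emph{distinct} components, so that the resulting lift alternates between arcs of two different curves and hence turns left and right alternately; for a smoothing of a single curve this alternation need not hold, so that argument does not transfer. Fortunately you do not need it: once Proposition~\ref{prop:small} applies to each component, the absence of small monogons and bigons (which your angle argument establishes) already gives tautness, and there is no separate ``non-small'' case to handle.
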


To finish this section, we show that there are examples of homotopy classes of curves for which the approach in Theorem~\ref{prop:taut pair} will definitely fail. 

\begin{proposition}\label{prop:nomaxmetric}
There exists a taut curve $\alpha$ on a surface $\Sigma$ with the following properties

\begin{enumerate}

    \item $\alpha$ can be smoothed to a taut curve,
    
    \item every unoriented corner of $\alpha$ is the corner of a quadrilateral,

    \item the unoriented corners of $\alpha$ that correspond to taut smoothings and those that do not are indistinguishable in the universal cover of $\Sigma$, and

    \item there are continuously many hyperbolic metrics on $\Sigma$ for which the smoothings of $\alpha$ that produce taut curves are not in the corners with largest unoriented angles.
\end{enumerate}   
\end{proposition}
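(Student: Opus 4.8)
The plan is to construct $\alpha$ by hand, exhibiting an explicit taut curve that simultaneously admits a taut smoothing and whose ``good'' and ``bad'' unoriented corners are combinatorially indistinguishable, and then to argue that the maximal-angle heuristic fails on an open set of metrics. The starting point is the analysis following Theorem~\ref{prop:taut pair}: the obstruction to the maximal-angle argument is a trapezium with four unoriented corners, so I would engineer $\alpha$ so that the intersection point carrying a taut smoothing sits at the non-touching corner of such a trapezium, while another intersection point with the same local picture in $\widetilde\Sigma$ carries a non-taut smoothing. Concretely, I would take a curve along the lines of Figure~\ref{fig: onlyoneforarcs} (left) or Figure~\ref{example maximal}, arranging that every unoriented corner is the corner of a quadrilateral (this is property (2)), and then pick the subloop decomposition $\alpha=\beta\gamma$ so that Corollary~\ref{cor:maincurvessmooth} guarantees one specific smoothing is taut (property (1)).

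For property (3), the key observation is that whether a smoothing at a vertex $v$ is taut is \emph{not} detected by the local combinatorial data visible in a bounded neighbourhood of a lift $\tilde v$: by Lemma~\ref{prop:pathologies} the failure of tautness is governed by whether $v$ is a vertex of a triangle, rhombus, or trapezium, and by Lemma~\ref{lem: smoothing invariants htpy class} this is a genuine homotopy-class invariant rather than a feature of the configuration. So I would exhibit two vertices $v_1, v_2$ of $\alpha$ together with lifts $\tilde v_1, \tilde v_2$ such that an orientation-preserving homeomorphism of $\widetilde\Sigma$ carries a large metric ball around $\tilde v_1$ to one around $\tilde v_2$ matching up $\tilde\alpha$ on the nose, yet $v_1$ lies on a trapezium realizing a taut smoothing and $v_2$ lies on a rhombus (or triangle) forbidding one — using Remark~\ref{rmk:connectedsmooth4casesreverse}. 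The point is that the rhombus/trapezium distinction is about which \emph{global} corners get identified downstairs, invisible upstairs in any bounded window.

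For property (4) I would argue by a perturbation/openness argument rather than an explicit metric computation. Choose the hyperbolic metric $m_0$ on $\Sigma$ in which $\alpha$ is geodesic (possible since $\alpha$ is primitive and $\Sigma$ has $\chi<0$). If in $m_0$ the largest unoriented angle already occurs at a vertex whose smoothing is not taut, we are done for $m_0$; in any case, the set of metrics for which this happens is open in Teichmüller space, since hyperbolic angles at intersection points of a fixed geodesic curve vary continuously with the metric, and a strict inequality between two such angles persists under small deformations. So it remains to produce \emph{one} such metric, after which ``continuously many'' follows. To produce one, I would use the flexibility of Teichmüller space to pinch or stretch the relevant subsurfaces so as to force a bad corner to carry the strictly largest unoriented angle — here the freedom to choose $\alpha$ with many quadrilaterals helps, because each trapezium contributes a corner whose angle we can inflate.

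The main obstacle is property (4): making rigorous that we can steer the maximal unoriented angle onto a prescribed ``bad'' vertex. The safe route is to avoid computing angles and instead exploit a symmetry: build $\alpha$ with an orientation-preserving isometry (for a suitable $m_0$) that swaps $v_1$ and $v_2$ but \emph{not} their smoothing types — impossible for an isometry of the curve, so instead build $\alpha$ with a near-symmetry, i.e.\ arrange that in $m_0$ the angles at $v_1$ and $v_2$ are equal by symmetry, then break the symmetry with an arbitrarily small deformation $m_\epsilon$ tilting the balance toward the bad vertex $v_2$; since nothing combinatorial changes, the smoothing at $v_2$ remains non-taut, giving an open family of metrics with the desired failure. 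Wrapping up, one assembles (1)--(4) and checks that the curve drawn genuinely has negative Euler characteristic complement and is taut (no small monogons or bigons, via Proposition~\ref{prop:small}), completing the construction.
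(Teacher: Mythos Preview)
Your overall plan---build one explicit curve and then argue about metrics---matches the paper's, and your instincts about where the difficulty lies (property (4)) are correct. But the perturbation argument you propose for (4) has a genuine gap. You want to start from a symmetric metric $m_0$ in which the angles at a ``good'' vertex $v_1$ and a ``bad'' vertex $v_2$ coincide, and then perturb so that the bad angle becomes strictly larger. The problem is that you have not established that these two angles can be varied \emph{independently}: a priori, every deformation of the hyperbolic structure might move them together, or might always favour $v_1$. Saying ``use the flexibility of Teichm\"uller space to pinch or stretch'' does not resolve this, because the angles at intersection points of a fixed closed geodesic are highly constrained functions on Teichm\"uller space, and exhibiting a direction in which $\angle v_2 - \angle v_1$ increases is exactly the content of the proposition. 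Your symmetry-breaking idea runs into the same wall: once you note (correctly) that no isometry can swap $v_1$ and $v_2$ while preserving $\alpha$, the ``near-symmetry'' fallback becomes vague precisely at the step where you need to control the sign of the perturbation.

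The paper sidesteps this entirely by making the construction of the metric explicit. The curve $\alpha$ is chosen so that its complementary regions are one hexagon, three quadrilaterals, and four annuli; one then builds the hyperbolic structure by hand, starting from a hyperbolic hexagon with prescribed angles $\theta$ and $\rho$ at the two types of corners (any $\pi/2<\theta,\rho<2\pi/3$ works), attaching hyperbolic trapezia with complementary angles, and filling in the annuli. Because $\theta$ and $\rho$ are free parameters in this gluing, the independence you need is built into the construction, and choosing $\theta<\rho$ immediately gives continuously many metrics where the maximal unoriented angle sits at a non-taut corner. This is more work up front but avoids the soft argument altogether.

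Two smaller points. For property~(1), Corollary~\ref{cor:maincurvessmooth} gives an \emph{oriented} (disconnecting) smoothing, whereas properties~(2)--(4) are about unoriented corners; you need at least one \emph{unoriented} smoothing to be taut, so that corollary is not the right tool here. For property~(3), your description has the roles reversed: by Remark~\ref{rmk:connectedsmooth4casesreverse}, being the touching corner of a trapezium \emph{obstructs} tautness, it does not realise it; in the paper's example the taut and non-taut unoriented corners are both corners of (ordinary, embedded) quadrilateral regions, and it is the global identification pattern---not a trapezium/rhombus dichotomy---that distinguishes them downstairs while leaving them indistinguishable upstairs.
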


\begin{proof} 
Let $\alpha$ be the curve on a sphere with four holes $\Sigma$ shown in Figure~\ref{example maximal}. The unoriented smoothings at the corners marked with $\theta$ are taut, those at the corners marked with $\rho$ are not. However all the corners look the same in $\widetilde \Sigma$.

Note that $\alpha$ cuts $\Sigma$ into a hexagon, 3 quadrilaterals, and 4 annuli. The taut smoothings are at the corners marked $\theta$.
We can give $\Sigma$ different hyperbolic structures in which $\alpha$ is a geodesic by taking a hyperbolic hexagon $H$ with equal sides and equal angles $\theta$ and $\rho$ at the marked corners for some $\pi/2<\theta, \rho<2/3\pi$ and taking for each quadrilateral a hyperbolic trapezium $T$ with 2 sides matching those of $H$ and interior angles $\pi-\theta$ at the top and and $\pi-\rho$ at the base. It is then easy to choose the shapes of the annuli to give $\Sigma$ a hyperbolic structure with geodesic boundary (or a complete hyperbolic metric with 4 cusps and finite area).
The angles $\theta$ and $\rho$ can be chosen independently, and if $\pi/2<\theta<\rho<2/3\pi$ the taut smoothings are not in the corners with largest hyperbolic angles.
\end{proof}

 \begin{figure}[h]
\centering
\includegraphics[height=2in]{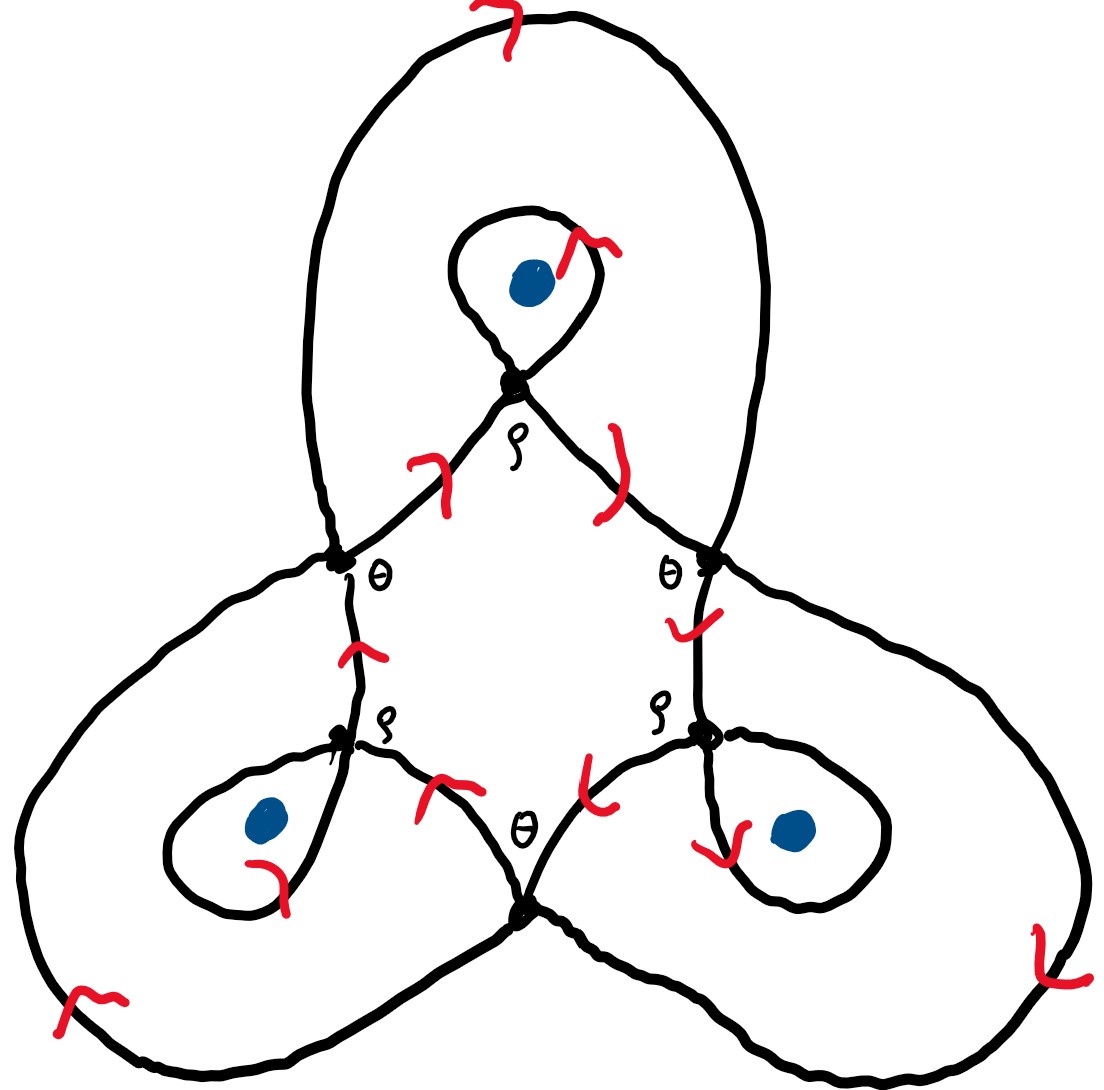}
\caption{A curve that can be endowed with hyperbolic metrics where the maximal angle does not produce a taut smoothing.}
\label{example maximal}
\end{figure}

\section{Finding taut smoothings directly}\label{sec:taut smoothing}

In order to find taut smoothings for some families of curves and arcs on a surface $\Sigma$, we will need to perform regular isotopies that change their configurations, 
and we need a combinatorial measure of angles that is invariant under these moves.

If $\alpha$ is a taut multi-curve on a surface $\Sigma$, its preimage $\widetilde \alpha$ in the universal cover $\widetilde \Sigma$ consists of embedded lines.
If the components of $\alpha$ are primitive, the lines cross each other in at most one point, and if $\chi(\Sigma)<0$ then each pair of lines is crossed only by a finite number of lines. 
One can use these properties to define a combinatorial measure for the angles at the corners of $\alpha$. 
If $\alpha$ is a taut 1-manifold on a surface with $\chi(\Sigma) < 0$ then its preimage in $\widetilde \Sigma$ consists of a mix of embedded lines and embedded arcs with similar properties, so an analogous definition also works in this case. 

\begin{definition}[Combinatorial angle]
 Let $l_1$ and $l_2$ be two lines of $\widetilde \alpha$ that intersect at a point $p$, and let $l_3,l_4,...,l_n$ be lines of $\widetilde \alpha$ that cross both $l_1$ and $l_2$.
 The points at infinity of $l_1,l_2,l_3,...,l_n$ cut the circle at infinity into $2n$ arcs, which belong to one of the four sectors at infinity determined the 4 corners of $p$. Define the combinatorial angle of a corner $c$ of $p$ as the number of arcs at infinity that lie in the sector determined by $c$, divided by the total number of arcs.
 The \emph{combinatorial angle of a corner of $\alpha$} is the combinatorial angle of a corresponding corner of $\widetilde \alpha$.
\end{definition}

It follows from the definition that opposite corners at a vertex of $\alpha$ have the same combinatorial angle, and that the combinatorial angles of  pairs of adjacent corners add up to $\frac{1}{2}$.

\begin{lemma}\label{lem:combinatorial angle config}
    The combinatorial angles of a taut 1-manifold $\alpha$ on a surface $\Sigma$ only depend on the free homotopy class of $\alpha$.
\end{lemma}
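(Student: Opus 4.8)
The plan is to reduce, via Lemma~\ref{lem: smoothing invariants htpy class}, to checking that a single triangle move preserves combinatorial angles, and to make that check routine by first rewriting the combinatorial angle of a corner purely in terms of the lines of $\widetilde\alpha$ and their endpoints at infinity. Throughout I fix (as the definition of combinatorial angle implicitly does) a hyperbolic metric on $\Sigma$, so that $\widetilde\Sigma$ has a circle at infinity $\partial_\infty\widetilde\Sigma$ and every line of the preimage of a taut $1$-manifold has two endpoints at infinity.

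First I would record the following reformulation. Let $p=l_1\cap l_2$ be a vertex of $\widetilde\alpha$, let $c$ be a corner at $p$, let $l_3,\dots,l_n$ be the lines of $\widetilde\alpha$ crossing both $l_1$ and $l_2$, and let $S_c\subset\partial_\infty\widetilde\Sigma$ be the open arc cut out by the four endpoints at infinity of $l_1$ and $l_2$ that faces $c$. Since each $l_i$ $(i\geq3)$ meets $l_1$ exactly once and $l_2$ exactly once, one of its endpoints at infinity lies on each side of $l_1$ and one on each side of $l_2$, so both endpoints of $l_i$ lie in two of the four arcs $S_{c'}$ and never at one of the four endpoints at infinity of $l_1$ and $l_2$. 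Hence $S_c$ contains exactly $k_c:=\#\{i\geq3 : l_i\text{ has an endpoint in }S_c\}$ of the $2(n-2)$ endpoints of $l_3,\dots,l_n$, so it is subdivided into $k_c+1$ of the $2n$ arcs at infinity, and the combinatorial angle of $c$ equals $\frac{k_c+1}{2n}$. Thus the combinatorial angles of $\alpha$ are a function of the data consisting of the set of lines of $\widetilde\alpha$, their endpoints at infinity, and which pairs of these lines cross. (As a check, this recovers that opposite corners have equal combinatorial angle and adjacent ones sum to $\tfrac12$, since $l_1$, respectively $l_2$, separates the two endpoints of each $l_i$.)

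Next I would apply Lemma~\ref{lem: smoothing invariants htpy class}: any two taut representatives $\alpha,\alpha'$ of the same free homotopy class are joined by a sequence of triangle moves at innermost triangles, inducing natural bijections between their intersection points and between their corners. A triangle move modifies each line of $\widetilde\alpha$ only within a compact set, so it does not alter the set of lines (which are in natural bijection before and after), nor the endpoints at infinity of any line, nor --- being a Reidemeister-III move --- which pairs of lines cross; by the reformulation above it therefore preserves all combinatorial angles, and the lemma follows. An alternative, homotopy-free argument is to note directly that, $\alpha$ and $\alpha'$ being taut, the lines of $\widetilde\alpha$ and $\widetilde\alpha'$ lie within bounded Hausdorff distance of geodesic lines or arcs whose endpoints at infinity (fixed points of the corresponding elements of $\pi_1\Sigma$ for closed components, the prescribed lifts of the endpoints for fixed-endpoint arcs) depend only on the homotopy class and the metric; this gives an endpoint-preserving equivariant bijection $\widetilde\alpha\leftrightarrow\widetilde\alpha'$, and by~\cite{FHS82} the lines of a taut configuration are embedded and meet pairwise at most once, so two of them cross if and only if their endpoints at infinity link --- whence crossings are preserved too. (This version moreover shows the combinatorial angle is independent of the metric: only the cyclic order of the ends of the proper arcs $\widetilde\alpha$ in the disk $\widetilde\Sigma$ was used.)

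The step I expect to be the main obstacle is the reformulation in the second paragraph --- it is the one genuinely combinatorial input, and everything after it is bookkeeping. Two secondary points also need attention. The definition tacitly assumes the endpoints of $l_1,\dots,l_n$ are $2n$ distinct points, which holds once the relevant components are pairwise non-homotopic (exactly the hypothesis under which Lemma~\ref{lem: smoothing invariants htpy class} applies); in general one first passes to a maximal pairwise-non-homotopic subcollection of components, to which the argument applies verbatim. And for arcs with free endpoints, the endpoints at infinity of the arc-lifts are not literally determined by the homotopy class --- they may slide within a lift of a boundary component --- but they cannot change their cyclic position relative to the other endpoints of $\widetilde\alpha$: at the instant two endpoints coincided, two lines of $\widetilde\alpha$ would share an endpoint at infinity, changing the linking type of that pair while the configuration remained taut, contradicting the homotopy-invariance of intersection data via the characterisation of crossings by linking.
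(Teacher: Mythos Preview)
Your proof is correct. Your ``alternative, homotopy-free argument'' is essentially the paper's proof: the paper observes that the combinatorial angle is determined by the cyclic ordering of the ends of the relevant lines/arcs of $\widetilde\alpha$, and that this ordering is invariant under proper homotopies (with the caveat for free endpoints that they must move without forming half-bigons). Your explicit formula $(k_c+1)/2n$ makes this precise and in fact anticipates the computation the paper carries out in Lemma~\ref{lem:combinatorial angle geometry}.

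Your primary route via Lemma~\ref{lem: smoothing invariants htpy class} and triangle moves is also valid but is a slight detour: once you have the reformulation in terms of endpoints-at-infinity and linking, the triangle moves are unnecessary, since the ends are already invariants of the homotopy class and not just of the taut configuration. One minor point of language: for arc components the relevant ends lie on $\partial\widetilde\Sigma$ rather than on the circle at infinity, so ``endpoints at infinity'' should be read as ``ends in the boundary of the disc $\widetilde\Sigma\cup\partial_\infty\widetilde\Sigma$''; your argument goes through unchanged with this interpretation.
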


\begin{proof}
    The combinatorial angle between two lines or arcs of $\widetilde \alpha$ is determined by the cyclic ordering of the ends of the lines and arcs that cross them in $\widetilde \Sigma$. These ends are invariant under proper homotopies of $\alpha$ (if the endpoints of the arcs are fixed)
    and the order of the ends is invariant if the endpoints of the arcs move without forming half-bigons. 
\end{proof}

Lemma~\ref{lem:combinatorial angle config} highlights an advantage of working with combinatorial angles rather than fixing a metric and working with hyperbolic angles: when fixing a metric one looses control over the configuration.

Combinatorial angles also have the following useful property:

\begin{lemma}\label{lem:combinatorial angle geometry}
If $\alpha$ is a taut 1-manifold, then the sum of the internal combinatorial angles of every triangle of $\alpha$ is at most $\frac{1}{2}$,
so the internal angle at each vertex is smaller than the external angles at the other two vertices.
\end{lemma}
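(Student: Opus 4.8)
\textbf{Proof proposal for Lemma~\ref{lem:combinatorial angle geometry}.}

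The plan is to work in the universal cover $\widetilde\Sigma$ and compare the sectors at infinity cut out by the three vertices of a triangle $T$ of $\alpha$. Lift $T$ to a triangle $\widetilde T$ in $\widetilde\Sigma$ with vertices $p_{12}=\ell_1\cap\ell_2$, $p_{23}=\ell_2\cap\ell_3$, $p_{13}=\ell_1\cap\ell_3$, bounded by subarcs of three lines $\ell_1,\ell_2,\ell_3$ of $\widetilde\alpha$. The internal combinatorial angle at, say, $p_{12}$ is measured by counting the arcs at infinity falling in the sector spanned by that corner of $\widetilde T$, normalised by the total number of arcs at infinity determined by all lines crossing both $\ell_1$ and $\ell_2$. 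The key geometric observation is that the four endpoints at infinity of $\ell_1$ and $\ell_2$ split $S^1_\infty$ into four arcs, and the interior of $\widetilde T$ lies in one of these sectors — the one spanned by the \emph{internal} corner at $p_{12}$; correspondingly the lines $\ell_1,\ell_2,\ell_3$ bounding $T$ have their relevant endpoints arranged so that the internal sector at $p_{12}$ is ``opposite'' to the side of $T$ on which the other two vertices lie.

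The main step is a counting/inclusion argument: I would show that the set of arcs at infinity contributing to the internal angle at $p_{12}$ is contained in (and typically properly contained in) the complement of the set contributing to the internal angles at $p_{13}$ and $p_{23}$, once everything is normalised to the same denominator. More precisely, the three internal sectors, together with the three external sectors, tile $S^1_\infty$; but the sectors need not all be measured against the same population of crossing lines, so one must pass to a common refinement — take all lines of $\widetilde\alpha$ crossing at least two of $\ell_1,\ell_2,\ell_3$ (or, more carefully, handle the denominators vertex by vertex and compare). The cleanest route is: since $\alpha$ is taut, $\ell_1,\ell_2,\ell_3$ are embedded lines pairwise crossing in exactly one point, so $\widetilde T$ is genuinely an ideal-triangle-like region, and any line crossing two sides of $\widetilde T$ must enter and exit, hence contributes an endpoint to at most the external sectors or is ``internal'' to exactly one vertex's sector. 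Summing the three internal-angle counts therefore gives at most half of the total arc count (the other half being accounted for by the three external sectors and the endpoints of $\ell_1,\ell_2,\ell_3$ themselves), which yields $\sum_{i}\text{(internal angle at vertex }i) \le \tfrac12$. The final clause — that the internal angle at one vertex is strictly smaller than each external angle at the other two — follows because, since opposite corners have equal combinatorial angle and adjacent corners sum to $\tfrac12$, the external angle at $p_{13}$ equals $\tfrac12$ minus the internal angle at $p_{13}$, which is at least the internal angle at $p_{12}$ plus the internal angle at $p_{23}$, hence strictly exceeds the internal angle at $p_{12}$ (using that the internal angle at $p_{23}$ is positive, as $\ell_2$ and $\ell_3$ genuinely cross inside an essential configuration, contributing at least their own endpoints to that sector).

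The hard part will be bookkeeping the different normalising denominators at the three vertices: the combinatorial angle at $p_{12}$ is divided by the number of arcs coming from lines crossing \emph{both} $\ell_1$ and $\ell_2$, which is a different (though overlapping) collection than the one used at $p_{13}$ or $p_{23}$. I expect to resolve this by observing that a line crossing two sides of $\widetilde T$ automatically crosses two of the three lines $\ell_1,\ell_2,\ell_3$ in the relevant positions, so its endpoint lands in a determined sector at each of the two incident vertices, and then running the count on the common set of such ``transversal'' lines while checking that lines crossing only one side of $\widetilde T$ (or none) only help the inequality. One should also double-check the degenerate case where $T$ has two corners at the same vertex of $\alpha$ downstairs (these still lift to a bona fide triangle in $\widetilde\Sigma$ by Definition~\ref{def:small ngons} and tautness, so the universal-cover argument is unaffected), and the boundary case for arcs, where some of the $\ell_i$ are arcs of $\widetilde\alpha$ ending on $\partial\widetilde\Sigma$ rather than bi-infinite lines — here the ``points at infinity'' include the boundary endpoints, and the same sector-counting goes through verbatim since Lemma~\ref{lem:combinatorial angle config} already guarantees these endpoints and their cyclic order are well defined.
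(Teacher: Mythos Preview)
Your plan is correct and matches the paper's proof: work in $\widetilde\Sigma$, let $L$ be the set of lines of $\widetilde\alpha$ (other than $\ell_1,\ell_2,\ell_3$) crossing at least two of the $\ell_i$, and compare the three angle-fractions on this common set. The paper executes this cleanly by writing $\angle c_i = \dfrac{1+|L_i^+|}{2(3+|L_i|)}$, where $L_i\subset L$ consists of the lines crossing both $\ell_j$ and $\ell_k$, and $L_i^+\subset L_i$ of those whose endpoints land in the $c_i$-sector pair; when every line of $L$ crosses all three $\ell_i$ one has $L_1=L_2=L_3=L$ and $|L_1^+|+|L_2^+|+|L_3^+|=|L|$, giving sum exactly $\tfrac12$, while any $l'\in L$ that misses $\ell_i$ lies in $L_i\setminus L_i^+$ (and in neither $L_j$ nor $L_k$), which strictly lowers the sum.

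One point to tighten: you repeatedly refer to lines ``crossing two sides of $\widetilde T$'', but the combinatorial angle is governed by crossings with the \emph{full} lines $\ell_i$, not with the sides of the triangle, and a line of $L$ may well cross zero or one sides of $\widetilde T$. The key fact that resolves your ``hard part'' is that the $c_i$-sector and its opposite at $p_{jk}$ lie on opposite sides of $\ell_i$, so a line of $L_i$ lies in $L_i^+$ only if it also crosses $\ell_i$. Your derivation of the final strict inequality (using positivity of each internal angle) is correct.
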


\begin{proof}
    We give a proof for multi-curves (it is the same for multi-arcs and arbitrary $1$-manifolds, changing lines by arcs or by the appropriate mix of both). 
    
   Take a triangle $\Delta$ of $\widetilde \alpha$. 
   Let $c_1$, $c_2$, $c_3$ be the inner corners of $\Delta$, and let $l_1$, $l_2$, $l_3$ be the lines that contain the opposite sides of $\Delta$.
    Let $L$ be the set of all the other lines of $\widetilde \alpha$ that cross at least two lines of $\Delta$, let $L_i$ be the subset of lines that cross $l_j$ and $l_k$, and $L_i^+$ the subset of these lines whose ends lie in the sectors determined by $c_i$ and its opposite corner. Then $ \angle c_i = \frac{1+|L_{i}^+|}{2(3+L_{i}|)}$.


If all the lines in $L$ cross $l_1$, $l_2$, $l_3$ then $L_i=L_j=L_k=L$ and $|L_{i}^+|+|L_{j}^+|+|L_{k}^+|=|L|$ so $ \angle c_i + \angle c_j + \angle c_k=1/2$

If some line $l'$ of $L$ does not cross $l_i$ then $l'$ lies in $L_i$ but $l'$ does not lie in $L_i^+$, 
so the fraction that gives $\angle c_i$ becomes smaller and the sum of the angles becomes smaller than $\frac{1}{2}$.
\end{proof}

The triangle moves of a taut 1-manifold $\alpha$ on a surface $\Sigma$ are generally defined for triangles of $\alpha$ that are embedded and innermost, as isotopies of $\alpha$ that push a neighbourhood of one side of the triangle across the triangle, flipping it. The resulting 1-manifold $\alpha'$ is taut and does not depend on which side of  the triangle $\Delta$ is being pushed.

If a triangle $\Delta$ is embedded but is crossed by other arcs of $\alpha$, we can do a triangle move that pushes one side of the triangle across $\Delta$, but to preserve tautness we must assume that the opposite corner  $c$ is not the corner of a triangle contained in $\Delta$ (we then say that $\Delta$ is an \emph{innermost triangle with corner $c$}). See Figure~\ref{fig: big1}. In this case,  the result does depend on the side $s$ that is being pushed across $\Delta$.

\begin{figure}[h]
\centering
\includegraphics[height=1.05in]{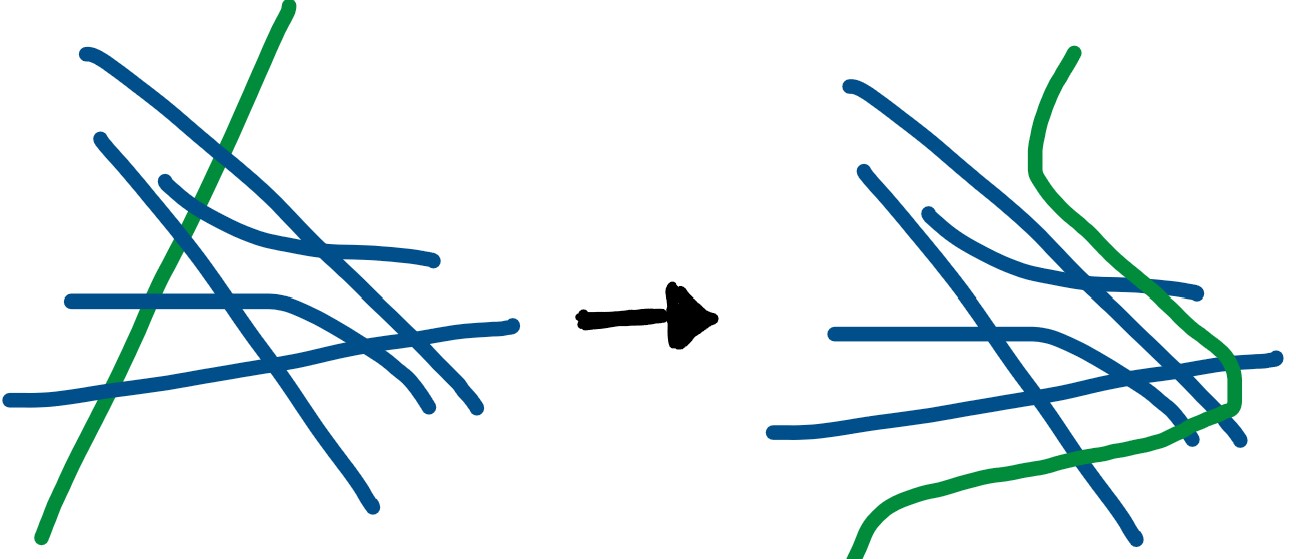}
\caption{Pushing an arc across a big triangle.}
\label{fig: big1}
\end{figure}

If a triangle $\Delta$ is not embedded but  is small (so its sides can be moved independently), we can similarly do a triangle move that pushes one side across $\Delta$, assuming that the opposite corner is not the corner of another triangle contained in $\Delta$. However, the resulting 1-manifold $\alpha'$ may not be taut, and even if $\alpha'$ is taut, the isotopy that transforms $\alpha$ into $\alpha'$ looks complicated, because $s$ may cross $\Delta$ many times. See Figure~\ref{fig: big2}.

\begin{figure}[h]
\centering
\includegraphics[height=1.05in]{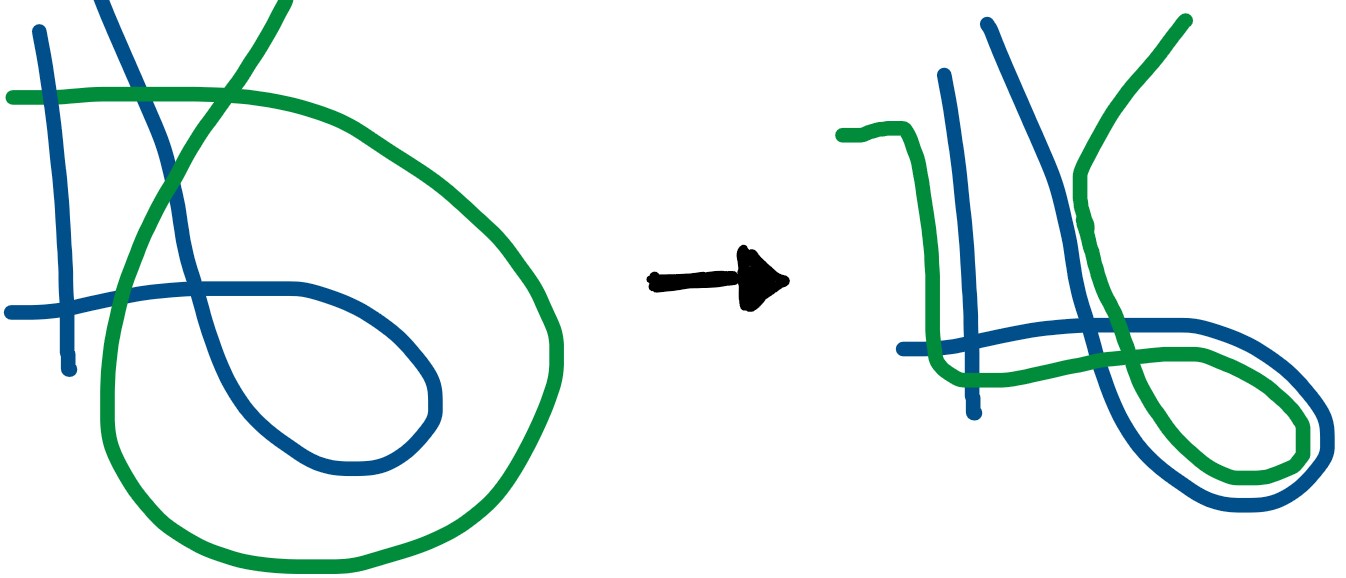}
\caption{Pushing an arc across a non-embedded triangle.}
\label{fig: big2}
\end{figure}

\begin{lemma}\label{lem: moveable triangles} Let $\alpha$ be a taut curve or arc on an orientable surface $\Sigma$. If $c$ is a corner of a small triangle $\Delta$ of $\alpha$, then there is a triangle move of $\alpha$ that pushes the opposite side across $\Delta$ and preserves tautness, or $c$ is the corner of another triangle with smaller area (which may not be contained in $\Delta$). \end{lemma}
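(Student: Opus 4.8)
The plan is to analyze what obstructs a tautness-preserving triangle move at the corner $c$ of the small triangle $\Delta$, and show that every obstruction produces a triangle with corner $c$ of strictly smaller area. Write $\Delta = s_1 \cup s_2 \cup s_3$ with $c = s_2 \cap s_3$ and $s_1$ the side opposite $c$. Since $\Delta$ is small, its three sides can be moved independently in $\widetilde\Sigma$, and pushing the side $s_1$ across $\Delta$ is a well-defined equivariant isotopy of $\widetilde\alpha$ provided $c$ is not the corner of a triangle strictly contained in $\Delta$; if it is, that contained triangle is the desired smaller-area triangle with corner $c$ and we are done. So assume $c$ is not the corner of any triangle contained in $\Delta$, and perform the push of $s_1$ across $\Delta$ to obtain $\alpha'$. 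By Proposition~\ref{prop:small} and Lemma~\ref{lem: minimalarea}, if $\alpha'$ is not taut then $\alpha'$ has a monogon or bigon of minimal combinatorial area, which is small; I would lift this to $\widetilde\alpha'$ and trace it back to a configuration in $\widetilde\alpha$.

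The key step is the following: since the move only changes $\widetilde\alpha$ inside the region $\widetilde\Delta$ swept by $s_1$, any small monogon or bigon $P$ of $\alpha'$ that did not already exist in $\alpha$ must have one of its sides running along the image $s_1'$ of the pushed side, and that side, together with the part of $P$ lying outside $\widetilde\Delta$, bounds a region that in $\widetilde\alpha$ is a bigon or triangle having $c$ (or the two corners of $v = s_2\cap s_3$, i.e. $c$ and its opposite) among its vertices. Because $\alpha$ is taut it has no monogons or bigons, so the only possibility is a triangle of $\alpha$ with a vertex at $c$; and since $P$ has strictly smaller combinatorial area than $\Delta$ — the move strictly shrinks the relevant region, as $s_1'$ lies on the far side of $s_1$ — this triangle has smaller area than $\Delta$ (it need not be contained in $\Delta$, since the third side comes from the line carrying $s_1$, which may exit $\widetilde\Delta$). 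This is exactly the second alternative in the statement. Combinatorial area, rather than hyperbolic area, is used here precisely so the bookkeeping is invariant under the isotopies, via Lemma~\ref{lem:combinatorial angle config} and the minimal-area characterization of smallness in Lemma~\ref{lem: minimalarea}.

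I expect the main obstacle to be the careful case analysis of \emph{where} the new small monogon or bigon $P$ of $\alpha'$ can sit relative to $\widetilde\Delta$: $P$ could meet $s_1'$ in more than one subarc, the line carrying $s_1$ may re-enter $\widetilde\Delta$, and one must rule out that $P$ is a genuine monogon (which would contradict tautness of $\alpha$ outright) or that it avoids $c$ altogether (impossible, since outside $\widetilde\Delta$ nothing changed and $\alpha$ was taut). A clean way to organize this is to observe that $\widetilde\alpha'$ differs from $\widetilde\alpha$ by a single "flip" supported in $\widetilde\Delta$, so any innermost bigon or monogon of $\widetilde\alpha'$ either is disjoint from $\widetilde\Delta$ (hence present in $\widetilde\alpha$, contradiction) or crosses $\partial\widetilde\Delta$, and in the latter case an innermost-arc argument on $P \cap \widetilde\Delta$ reduces to the single-subarc picture, giving the triangle of $\alpha$ with vertex $c$. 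Once the position of $P$ is pinned down, the area comparison is immediate from the fact that the flip of $s_1$ across $\Delta$ moves it off of $\widetilde\Delta$, so the new region is a proper sub-collection of the regions of $\widetilde\Sigma - \widetilde\alpha$ formerly counted by $\area(\Delta)$ minus at least the regions adjacent to $s_1$ inside $\widetilde\Delta$.
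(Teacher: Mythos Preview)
Your approach has a genuine gap. You treat the triangle move as if it changes $\widetilde\alpha$ inside a \emph{single} lift $\widetilde\Delta$, and then argue that any new small bigon $P$ of $\alpha'$ must have one side along the pushed arc $s_1'$ while the other side lies on an unchanged line of $\widetilde\alpha$. But the move is equivariant: \emph{all} lifts of $s_1$ are pushed across \emph{all} lifts of $\Delta$ simultaneously, and the problematic new intersections can arise from the interaction of two \emph{different} moved lifts. Concretely, two lifts $\widetilde\Delta_i^k$ and $\widetilde\Delta_j^l$ may overlap so that after the move the corresponding broken arcs $\Lambda_i^k$ and $\Lambda_j^l$ cross each other twice, or one crosses the other and also the complementary part of the same line. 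Your single-subarc and innermost-arc reductions do not see these configurations at all, and this is precisely the hard case.

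The paper handles this by a direct case analysis of how two moved lifts can interact: one configuration (a ``prism'' where $\Lambda_i^k$ and $\Lambda_j^l$ cross twice with the inner corners being translates of $\tilde c$) is excluded because the covering translation taking one to the other would have a fixed point; the remaining configuration (each $\widetilde\Delta$ contains a non-$\tilde c$ corner of the other) produces two auxiliary triangles $\Delta_i'$, $\Delta_j'$ with corner $\tilde c$, and an inclusion--exclusion computation on combinatorial areas shows $\area(\Delta_i')+\area(\Delta_j') < 2\,\area(\Delta)$, so one of them is strictly smaller. Your area argument (``the flip moves $s_1$ off $\widetilde\Delta$, so the new region is a proper sub-collection of the old regions'') is not adequate even as a sketch for this case: neither $\Delta_i'$ nor $\Delta_j'$ is contained in a single lift of $\Delta$, and individually either one could have area larger than $\Delta$.
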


\begin{proof}
As $\Delta$ is small, its sides are subarcs of $\alpha$ that do not overlap, so we can push the side $s_1$ opposite to $c$ across $\Delta$ without moving the other two sides. But this move may not preserve tautness, even if $\Delta$ is an innermost triangle with corner $c$. To see why, observe that the preimage $\widetilde \alpha$ of $\alpha$ in the universal cover $\widetilde \Sigma$ is a collection of lines or arcs $\tilde \alpha_i$ that cross each other in at most one point.
The isotopy of $\alpha$ lifts to an isotopy of $\widetilde \alpha$ that pushes all the preimages of $s_1$ across all the preimages of $\Delta$, and these simultaneous moves can create new intersections between the resulting lines or arcs $\tilde \alpha'_i$.

Let  $s_i^k$ be the preimages of $s$ in $\tilde \alpha_i$, let $t_i^k$ be the complementary arcs of $\tilde \alpha_i$, let $\widetilde \Delta_i^k$ be the preimages of $\Delta$ adjacent to $s_1^k$, and let
$\Lambda_i^k= \widetilde \Delta_i^k -s_i^k$.

Replacing all $s_l^k$ by $\Lambda_l^k$  can create more intersections between $\tilde \alpha'_i$ and $\tilde \alpha'_j$ when
$\tilde \alpha_i$ crosses some $\Lambda_j^k$ twice, when 
$\tilde \alpha_i$ crosses $t_j^k$ and $\Lambda_j^k$ (because then it crosses $\Lambda_j^k$ again), when $\Lambda_i^k$ crosses $\Lambda_j^l$ twice, 
or when $\Lambda_i^l$ crosses $t_j^k$ and $\Lambda_j^k$.
If $\tilde \alpha_i$ or any of the lines that contain the sides of $\Lambda_i^k$ cross $\Lambda_j^l$ twice, then $\Delta$ is not innermost with corner $c$, and there is a triangle with smaller area. 

\begin{figure}[h]
\centering
\includegraphics[height=1in]{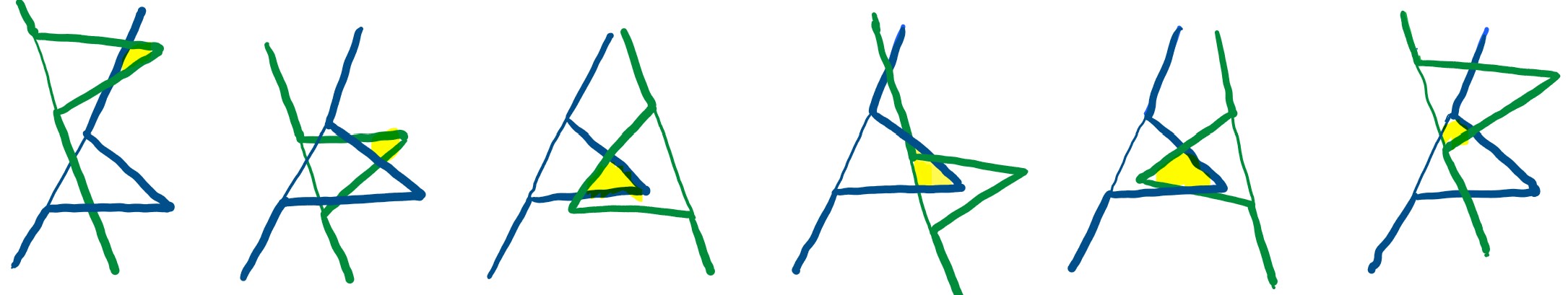}
\caption{Different ways in which a triangle move on $\alpha$ can produce extra intersections on $\widetilde \alpha$.}
\label{triangle1}
\end{figure}

This leaves only two cases: when $\Lambda_i^k$ and $\Lambda_j^l$ cross twice, forming a prism with inner corners $\tilde c_i^k$ and 
$\tilde c_j^k$, as in the second to last configuration in Figure~\ref{triangle1}, and when 
$\widetilde \Delta_i^k$ contains a corner of $\widetilde \Delta_j^k$ different from $\tilde c_i^k$  and vice-versa, as in the rightmost configuration of Figure~\ref{triangle1}.

The first case  can be ruled out because it would mean that the covering translation that sends $\Lambda_i^k$ and $\Lambda_j^l$ has a fixed point. 

In the second case there are two other triangles $\Delta_i'$ and $\Delta_j'$
with corner $\tilde c$ contained in $\widetilde \Delta_i^k \cup \widetilde \Delta_j^l$. 
The triangle $\Delta_i'$ 
is bounded by $\tilde \alpha_i$ and $\widetilde \Lambda_j^l$
and $\Delta_j'$ is bounded by $\tilde \alpha_j$ and $\widetilde \Delta_i^k$ as in Figure~\ref{triangle2}. 
We will show that one of these triangles has combinatorial area smaller than that of $\Delta$, contradicting our initial assumption.
With the complementary regions labelled as in the figure we get:

$$\area (\Delta_i') = \area ( \widetilde \Delta_j^l) - \area  (A) - \area ( B) + \area  (C)$$

$$\area  (\Delta_j') = \area  (\widetilde \Delta_i^k) - \area (C) - \area(D) + \area (B)$$

\begin{figure}[h]
\centering
\includegraphics[height=1.6in]{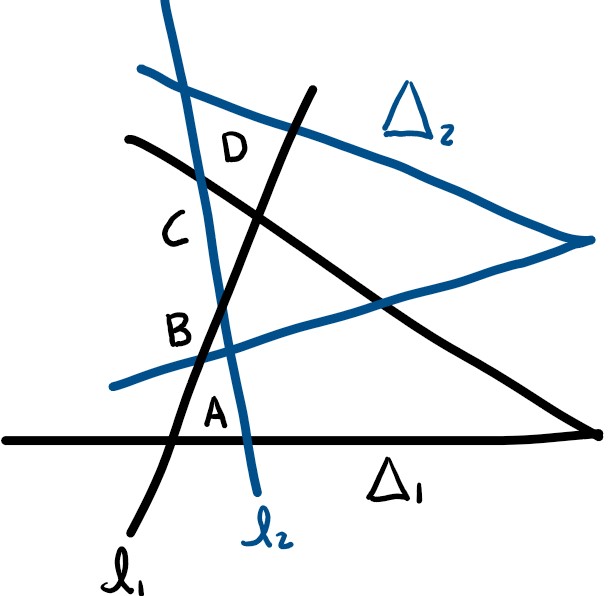}
\caption{$\Delta$ does not have minimal combinatorial area.}
\label{triangle2}
\end{figure}

Therefore 

$$\area (\Delta_i') + \area  (\Delta_j') = \area (\widetilde \Delta_i^k) + \area  (\widetilde \Delta_j^l) - \area  (A) - \area  (B)$$

So either $\area  (\Delta_i') < \area (\Delta)$ or $\area (\Delta_j') < \area (\Delta)$,
contradicting our hypothesis.
\end{proof}

We now state one of the main results of this work:

\begin{theorem}   \label{thm:mainsmooth}
Let $\alpha$ be a taut curve or arc on an orientable surface $\Sigma$ that has an unoriented corner that is not the corner of a quadrilateral or a pentagon. Then $\alpha$ has a connected taut smoothing.
\end{theorem}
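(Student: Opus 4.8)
The plan is to choose an unoriented corner $c$ at a vertex $v$ that is not the corner of a quadrilateral or pentagon, perform the unoriented smoothing $\alpha'$ of $\alpha$ at $v$, and argue that $\alpha'$ is taut. By Remark~\ref{rmk:orientations and connectedness}, an unoriented smoothing of a single curve (or arc) keeps the number of curve components the same, so $\alpha'$ is connected, which gives the ``connected'' part of the conclusion for free. The substance is tautness. If $\alpha'$ is not taut, then by Proposition~\ref{prop:small} it contains a small monogon, bigon, or (in the arc case) half-bigon, and by Lemma~\ref{prop:pathologies} the offending $n$-gon $P$ of $\alpha'$ arises from a triangle, rhombus, or trapezium of $\alpha$ having $v$ as a vertex --- with $P$ a monogon only if it comes from a triangle with \emph{two} corners at $v$, i.e. a triangle $T$ such that $c$ and its opposite corner are two of $T$'s corners.

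\textbf{Ruling out the triangle cases.} Suppose $P$ comes from a triangle $T$ of $\alpha$ with $v$ as a vertex. First I would dispose of the ``two corners at $v$'' case: if $c$ and the opposite corner at $v$ are both corners of $T$, then since the surface is orientable and the two strands through $v$ belong to the same curve, the local picture forces $c$ to actually be an \emph{oriented} corner, contradicting our choice (this is the standard orientability argument used in the proof of Theorem~\ref{prop:taut pair}). So $T$ has exactly one corner at $v$, and that corner is $c$ (up to passing to the opposite corner, which has the same combinatorial angle). Now invoke Lemma~\ref{lem:combinatorial angle geometry}: the internal combinatorial angle of $T$ at $v$ is strictly smaller than the external angle at each of the other two vertices of $T$. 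Smoothing at $c$ amounts to ``opening up'' the corner $c$; the resulting bigon/monogon of $\alpha'$ is controlled by the other two vertices of $T$, and at (at least) one of those vertices the relevant corner is an unoriented corner of $\alpha$ whose combinatorial angle exceeds $\angle c$. But here I must be careful: a larger combinatorial angle elsewhere is not in itself a contradiction --- combinatorial angles are not what the hypothesis controls. The actual contradiction comes from the shape of $T$: since $T$ has only three sides and $c$ is one corner, after smoothing at $c$ the triangle $T$ becomes a small bigon (or monogon) of $\alpha'$ only if $\alpha'$ genuinely fails to be taut there; the combinatorial-angle inequality, combined with the fact that the corner opposite to the ``problematic'' one is unoriented and is itself the corner of a quadrilateral or pentagon of $\alpha'$ pulling back to a quadrilateral/pentagon of $\alpha$ with corner $c$, is exactly what is excluded by hypothesis. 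I expect the clean way to phrase this is: a triangle of $\alpha$ with corner $c$, upon smoothing at $c$, exhibits $c$ as a corner of a smaller-complexity configuration in a way that, after a short case analysis of how the third strand of $T$ meets $v$, makes $c$ the corner of a quadrilateral or pentagon of $\alpha$, contrary to assumption.

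\textbf{Ruling out the quadrilateral cases and large bigons.} Next suppose $P$ comes from a rhombus or trapezium $Q$ of $\alpha$ with two corners at $v$. Since $\alpha$ is a single curve and $\Sigma$ is orientable, the trapezium case (two \emph{adjacent} corners at $v$) is impossible by the same orientability argument as in Theorem~\ref{prop:taut pair} (the two strands meeting at the repeated adjacent corner would be the same strand, forcing a monogon or contradicting tautness of $\alpha$). So $Q$ is a rhombus: $c$ and its opposite corner are two \emph{non-adjacent} corners of $Q$. But then $Q$ is a quadrilateral of $\alpha$ having $c$ as a corner --- directly contradicting the hypothesis that $c$ is not the corner of a quadrilateral. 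Finally, for the arc case one must also exclude small half-bigons of $\alpha'$; these pull back, by the half-bigon analogue of Lemma~\ref{prop:pathologies} (a $\partial$-$n$-gon of $\alpha$ with a corner at $v$, necessarily a $\partial$-triangle or $\partial$-quadrilateral), to $\partial$-configurations of $\alpha$ with corner $c$, and the same orientability plus combinatorial-angle plus ``not a quadrilateral/pentagon'' bookkeeping rules them out --- this is where the \emph{pentagon} exclusion in the hypothesis is presumably needed, to absorb the boundary-arc contributions that make a $\partial$-quadrilateral count as a pentagon's worth of sides. There remains the possibility, as in Theorem~\ref{prop:taut pair}, that $\alpha'$ has a \emph{large} (non-small) bigon or half-bigon: I would handle this exactly as there, taking an innermost such bigon $B$, observing that the side of $B$ lying on a preimage of $\alpha'$ is a zigzag of subarcs of $\tilde\alpha$, and arguing that an innermost $B$ must in fact be small (otherwise a corner pointing out of $B$ produces a strictly smaller bigon or half-bigon), reducing to the small case already handled.

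\textbf{Main obstacle.} The hard part is the precise combinatorial-geometric bookkeeping in the triangle case: translating ``smoothing at $c$ creates a small bigon from a triangle $T$ of $\alpha$'' into ``$c$ is the corner of a quadrilateral or pentagon of $\alpha$,'' keeping careful track of the several ways the third strand of $T$ can pass through $v$ and of the orientations of all four corners at $v$. The combinatorial angle (Lemma~\ref{lem:combinatorial angle geometry}) is the tool that makes one of the relevant corners provably unoriented, but matching up the resulting configuration with a genuine quadrilateral or pentagon of $\alpha$ --- and seeing why a pentagon, and not merely a quadrilateral, can appear --- is the delicate step. If that matching fails to land cleanly, the fallback is to perform a triangle move (via Lemma~\ref{lem: moveable triangles}) to simplify $T$ before smoothing, iterating on combinatorial area until no obstructing small $n$-gon survives.
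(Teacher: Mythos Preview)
Your proposal has the right ingredients but assembles them incorrectly, and the central mechanism of the paper's proof is missing. The key gap is that you never invoke an \emph{extremal} choice: the paper does not smooth at an arbitrary unoriented corner satisfying the hypothesis, but at one whose \emph{combinatorial angle is maximal} among such corners. This maximality is what rules out the triangle case. If $c$ were the corner of a triangle $\Delta$, one of the external corners $c'$ of $\Delta$ has strictly larger combinatorial angle (Lemma~\ref{lem:combinatorial angle geometry}), is unoriented, and---this is the actual bookkeeping---is also not the corner of a quadrilateral or pentagon (because any such $n$-gon at $c'$, together with $\Delta$, would produce one at $c$), contradicting maximality. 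Your attempted direct implication ``triangle at $c$ forces a quadrilateral or pentagon at $c$'' is simply false in general; it only works once you know the triangle would yield a \emph{better} corner $c'$, and ``better'' means larger combinatorial angle while still satisfying the hypothesis.

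Even after this, $c$ may fail to give a taut smoothing because the \emph{opposite} corner $c^-$ can be the corner of a triangle (the hypothesis says nothing about $c^-$). This is where the triangle moves (your ``fallback'') are not a fallback but the heart of the argument: push the triangle across using Lemma~\ref{lem: moveable triangles}, obtain a new corner $c'$ with strictly larger combinatorial angle, and verify that $c'$ again satisfies the no-quadrilateral/no-pentagon hypothesis so the process can be iterated. The pentagon exclusion is used precisely here---to control what $n$-gons can appear at $c'$ after the move, and to guarantee no lift of $c$ sits inside the triangle being pushed---not for half-bigons. Two further corrections: since $\alpha'$ is a single curve or arc, Proposition~\ref{prop:small} already gives a \emph{small} monogon or bigon, so your large-bigon zigzag argument is unnecessary; and for arcs the statement is about tautness with fixed endpoints, so half-bigons do not enter at all.
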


\begin{proof}
Among the unoriented corners of $\alpha$ that are not corners of quadrilaterals or pentagons, let $c$ be one with maximal combinatorial angle.
We claim that $c$ is not the corner of a triangle.
If it is, choose a triangle $\Delta$ with corner $c$ that is not contained in a larger triangle (the hypothesis implies that $\chi(\Sigma)<0$, so there are only finitely many triangles).
$\Delta$ has an exterior unoriented corner $c'$ with larger combinatorial angle than $c$. Moreover, $c'$ is not the corner of a quadrilateral or a pentagon, as otherwise $c$ would also be a corner of one, as can be seen by lifting $\Delta$ and the possible quadrilaterals and pentagons with corner $c'$ to the universal covering of $\Sigma$ (as in Figure \ref{triangle3}). This contradicts our choice of $c$.
So we can assume that $c$ is not the corner of any triangles, quadrilaterals or pentagons.

\begin{figure}
\centering
\includegraphics[height=2in]{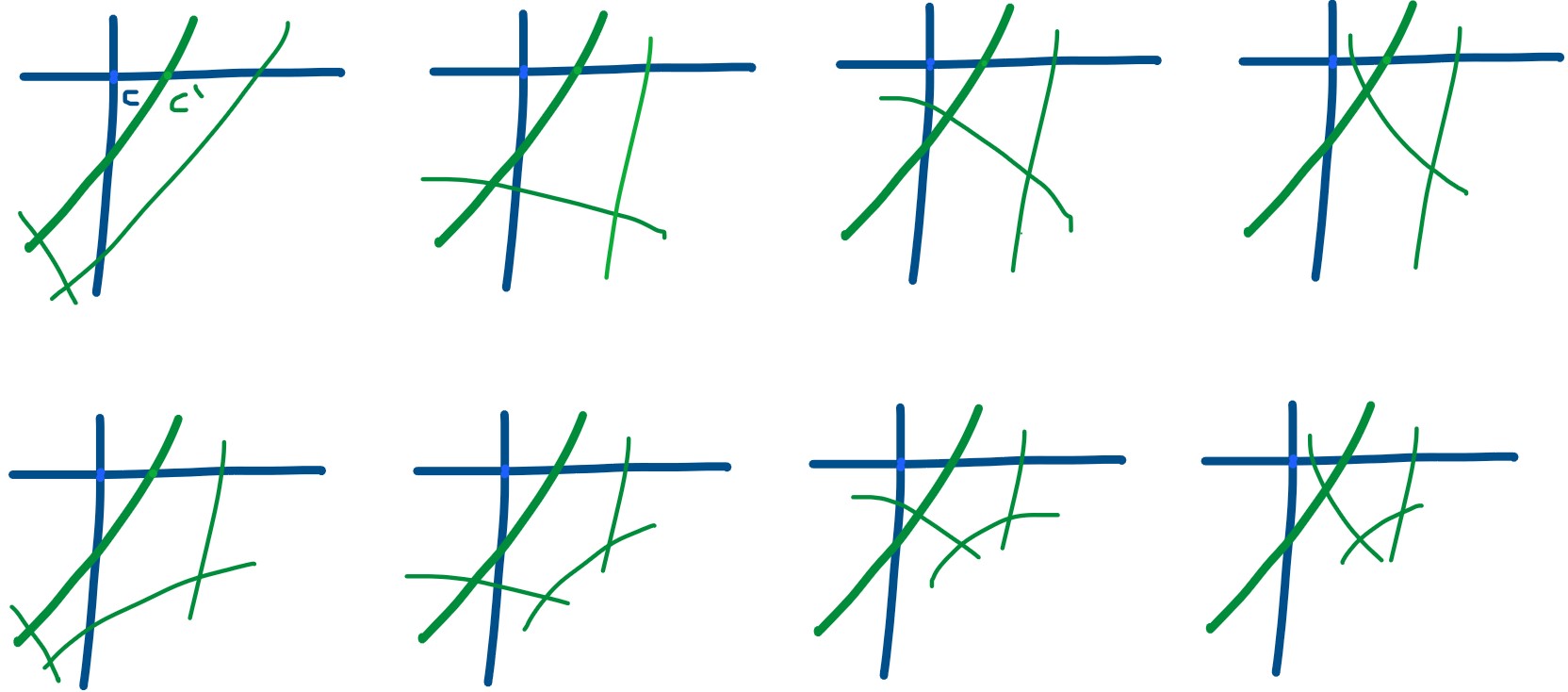}
\caption{
If  $c'$ is the corner of a quadrilateral then $c$ is also the corner of a quadrilateral.}
\label{triangle3}
\end{figure}

Let $c^-$ be the opposite corner of $c$. If $c^-$ is not a corner of a triangle, then the smoothing $\alpha'$ at $c$ cannot have small monogons or bigons and so it is taut by Proposition~\ref{prop:small}.
If $c^-$ is a corner of a triangle then $\alpha'$ is not taut, but $\alpha'$ cannot have monogons from a triangle or bigons from a quadrilateral because then $c$ would be a corner of a triangle or a quadrilateral. Thus, each small bigon of $\alpha'$ comes from a triangle with a corner at $c^-$, and the triangle and resulting bigon have the same combinatorial area.

Let $\Delta$ be a triangle of minimal combinatorial area among the triangles with corner $c^-$. Then, the resulting bigon has minimal combinatorial area, and by by Lemma \ref{lem: minimalarea} it is small. Therefore $\Delta$ is small (indeed, two vertices of $\Delta$ cannot be identified because $c$ would then be a corner of a triangle).
Now by Lemma~\ref{lem: moveable triangles}  
there is a triangle move of $\alpha$ that pushes the side of $\Delta$ opposite to $c^-$ across $\Delta$ to form a innermost triangle $\Delta'$ with corner $c$, and this move preserves tautness.
$\Delta'$ has an outer unoriented corner $c'$ with combinatorial angle larger than $c$.
We claim that $c'$ is not the corner of an $n$-gon with $n \leq 5$. If this is true then we are in a better position than when we started, and we can repeat the argument until we find an unoriented corner of $\alpha$ that is not the corner of a triangle or a quadrilateral, and whose opposite corner is not the corner of a triangle, so the smoothing at this corner is taut.

We only need the hypothesis that $c$ is not the corner of a pentagon to guarantee that a triangle $\widetilde \Delta$ above $\Delta$ does not enclose any lifting $\tilde c$ of $c$ (if it did, the two lines that cross at $\tilde c$ would split $\widetilde \Delta$ into polygons with at most 5 sides, and $\tilde c$ would be a corner of one of them).

Let $\alpha'$ be the result of the triangle move on $\alpha$.
$\alpha'$ is obtained by replacing an arc $a$ of $\alpha$ that contains a side of $\Delta$ by another arc $a'$ that runs parallel to the other two sides of $\Delta$, creating a triangle $\Delta'$ with corner $c$ that is not crossed by other arcs of $\alpha'$.
There is an isotopy of $\alpha$ that moves $a$ to $a'$
and takes place in a regular neighbourhood of $\Delta$ (we don't know if this isotopy can be done by a series of local triangle moves, but we don't need it).

Let $\widetilde \Delta$ be a triangle of $\widetilde \alpha$ above $\Delta$, let $\tilde c$ be a corner of $\widetilde \Delta$ above $c$ and $l_0$, $l_1$ and $l_2$ the lines of $\widetilde \alpha$ that contain its sides, where $l_0$ contains the side opposite to $\tilde c$.
If $\widetilde \Delta$ does not enclose any liftings of $\tilde c$, then the only line of $\widetilde \alpha$ that crosses $\tilde c$ during the isotopy is $l_0$, when the triangle $\widetilde \Delta'$ is created.

Now consider the outer unoriented corner $c'$ of $\Delta'$. We claim that this corner  still satisfies the hypotheses of the theorem.

\textbf{    $c'$ is not the corner of a triangle of $\alpha'$:}
    Otherwise $\tilde c'$ is the corner of a triangle formed by the lines $l_0'$ and $l_2'$ and another line $l_3'$ of $\widetilde \alpha$, but then $l_1'$, $l_2'$ and $l_3'$ also form triangle. Since $l_3$ did not cross $\tilde c$ during the isotopy, then $l_1$, $l_2$ and $l_3$ formed a triangle of $\widetilde \alpha$, contradicting the assumption that $c$ was not the corner of a triangle of $\alpha$.

\textbf{    $c'$ is not the corner of a quadrilateral of $\alpha'$:}
Otherwise $\tilde c'$ is the corner of a quadrilateral formed by the lines $l_0'$ and $l_2'$ and two other lines $l_3'$ and $l_4'$ of $\widetilde \alpha$. But then, as $l_3'$ and $l_4'$ do not cross $\widetilde \Delta'$, $l_1'$, $l_2'$, $l_3'$ and $l_4'$ also form a quadrilateral. (See Figure \ref{triangle3} again).
Since $l_3$ and $l_4$ did not cross $\tilde c$ during the isotopy, then $l_1$, $l_2$ $l_3$ and $l_4$ formed a quadrilateral of $\widetilde \alpha$, contradicting the assumption that $c$ was not the corner of a quadrilateral of $\alpha$.

 \textbf{$c'$ is not the corner of a pentagon of $\alpha'$:}
The proof is almost the same as for quadrilaterals, except that the assumption that 
$l_0'$, $l_2'$, $l_3'$, $l_4'$ and $l_5'$ form a pentagon of $\alpha'$ only implies that $l_1'$, $l_2'$, $l_3'$, $l_4'$ and $l_5'$ form a quadrilateral or a pentagon, so $l_1$, $l_2$, $l_3$, $l_4$ and $l_5$ form a quadrilateral or a pentagon of $\alpha$.

Thus, $c'$ satisfies the hypotheses of the  theorem, and is an unoriented corner with a larger combinatorial angle, contradicting our initial hypothesis.
\end{proof}

\begin{remark}
    The same proof as in Theorem~\ref{thm:mainsmooth} can also be used to obtain a slightly improved variation of Corollary~\ref{cor:maincurvessmooth}. Namely, that if $\Sigma$ is an orientable surface and $\alpha$ is a taut curve that admits a decomposition as $\alpha=\beta \gamma$ where $\beta$ and $\gamma$ intersect only at their endpoints, and no triangle of $\alpha$ that has $p=\gamma \cap \beta$ as a vertex is cyclically oriented, then the connected smoothing at $p$  is taut.   
\end{remark}

Using Theorem~\ref{thm:mainsmooth} and some elements of its proof, we can answer Question~\ref{qn:1 intro} for arcs with fixed endpoints completely; we also obtain an alternative proof of Theorem~\ref{prop:taut pair} for multi-arcs:

\begin{theorem}\label{thm: arcs}
Let $\Sigma$ be an orientable surface and let $\alpha$ be a non-simple multi-arc on $\Sigma$ that is taut fixing its endpoints.  Then  $\alpha$ has a smoothing that is taut fixing its endpoints and has no more components than $\alpha$. 
\end{theorem}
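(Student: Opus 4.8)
The plan is to locate a double point of $\alpha$ and treat two cases, producing in each a smoothing with exactly as many components as $\alpha$. \textbf{Case 1: two distinct components of $\alpha$ meet.} Then $\alpha$ is a primitive taut $1$-manifold with at least two non-disjoint components, so Theorem~\ref{prop:taut pair} provides a taut smoothing; inspecting its proof, this smoothing is taken at an unoriented corner \emph{between} two distinct components, and by Remark~\ref{rmk:orientations and connectedness} such a smoothing leaves the number of arc components unchanged and creates no curve component, so the total number of components does not increase. (One may instead re-prove this case in the spirit of Section~\ref{sec:taut smoothing}, replacing the hyperbolic maximal-angle comparison by the combinatorial one furnished by the combinatorial angle of Section~\ref{sec:taut smoothing} and Lemma~\ref{lem:combinatorial angle geometry}; this is the alternative proof of Theorem~\ref{prop:taut pair} for multi-arcs mentioned above.)

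\textbf{Case 2: every double point of $\alpha$ is a self-intersection of a single component.} Fix a self-intersecting component $a$; then every other component is disjoint from $a$ and from every other component. I would traverse $a$ from one of its endpoints and let $p=a(t_1)=a(t_2)$ with $t_1<t_2$ be the first self-intersection point encountered, so $t_1$ is minimal among all self-intersection parameters of $a$. Minimality of $t_1$ forces the initial segment $\iota:=a|_{[0,t_1]}$ to be embedded and to contain no self-intersection point of $a$ other than its endpoint $p$. Writing $a=\iota\,\ell\,\sigma$ with $\ell:=a|_{[t_1,t_2]}$ a loop based at $p$ and $\sigma:=a|_{[t_2,1]}$, let $\alpha'$ be the \emph{unoriented} smoothing of $\alpha$ at $p$; it replaces $a$ by the single arc $a'=\iota\,\bar\ell\,\sigma$ and leaves all other components untouched. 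This surgery is supported in a small disc about $p$ that is disjoint from every other component, so $\alpha'$ is the disjoint union of $a'$ with those components and has the same number of components as $\alpha$; and once $a'$ is known to be taut fixing its endpoints, so is $\alpha'$, since the remaining components are unchanged --- hence still self-minimal --- and lie disjointly from $a'$ and from one another in this very configuration.

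\textbf{Tautness of $a'$.} Suppose $a'$ is not taut fixing its endpoints. By Proposition~\ref{prop:small} it has a small monogon or bigon, which by Lemma~\ref{prop:pathologies} comes from a triangle, rhombus, or trapezium of $\alpha$ having a corner at one of the two unoriented corners of $p$ that the smoothing caps off; write these as $c$ and $c^{-}$, with $c$ the one lying between the incoming half-edge of $\iota$ and the incoming half-edge of $\ell$. The crucial observation is that \emph{no} $n$-gon of $\alpha$ has a corner at $c$: a side of such an $n$-gon leaving $p$ along $\iota$ would meet no vertex of $\widetilde\alpha$ ahead of it --- since $\iota$ is embedded and carries no self-intersection point of $a$ beyond $p$ --- so it could never turn and would have to run into $\partial\widetilde\Sigma$, which an interior $n$-gon cannot do. Consequently a monogon could only come from a triangle with corners at both $c$ and $c^{-}$, and a bigon only from a triangle with a corner at $c^{-}$ (a quadrilateral would need corners at both), so the observation leaves only the possibility of a triangle $T$ of $\alpha$ with a corner at $c^{-}$. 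Choosing $T$ of minimal combinatorial area among such triangles, the bigon it induces in $a'$ has minimal area, hence is small by Lemma~\ref{lem: minimalarea}, so $T$ is small; then Lemma~\ref{lem: moveable triangles} supplies a tautness-preserving triangle move that pushes the side of $T$ opposite $c^{-}$ across $T$ and creates a triangle with a corner at $c$ --- and this move is localised near $T$, away from $\iota$, so the embeddedness of $\iota$, and with it the impossibility of a polygon corner at $c$, survives the move: contradiction. Hence $a'$ is taut and $\alpha'$ is the desired smoothing. (Equivalently, the observation says $c$ is an unoriented corner that is not the corner of a quadrilateral or a pentagon, so Theorem~\ref{thm:mainsmooth} already gives $a$ a connected taut smoothing, which by Lemma~\ref{lem: smoothing invariants htpy class} may be taken at a self-intersection of $a$ and then transplanted into $\alpha$ exactly as above.)

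\textbf{Where I expect the difficulty to be.} The substantive step is the last one: pinning down precisely which corners of $p$ a smoothing-born monogon or bigon is forced to use --- carried out through Lemma~\ref{prop:pathologies} and the orientation conventions of Section~\ref{subsec: repel} --- and then checking that the triangle move used to rule out $T$ stays clear of the embedded initial segment $\iota$, so that the rigidity extracted from the first self-intersection is not destroyed. Reusing the ``moveable triangle'' technology of Lemmas~\ref{lem: moveable triangles} and~\ref{lem: minimalarea} in this way is what using elements of the proof of Theorem~\ref{thm:mainsmooth} amounts to; the only other delicate point is the combinatorial recasting of the maximal-angle comparison in Case~1.
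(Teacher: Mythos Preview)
Your case split into ``two distinct components meet'' versus ``all crossings are self-crossings of one component $a$'' is a clean alternative to the paper's uniform treatment, and your parenthetical in Case~2 --- simply invoking Theorem~\ref{thm:mainsmooth} on the single arc $a$ (the corner $c$ you exhibit is not the corner of any $n$-gon, so certainly not of a quadrilateral or pentagon) and then noting that the resulting connected taut smoothing stays disjoint from the other, already-disjoint components --- is correct and is essentially what the paper does for a single arc. The case split buys you something: because the other components are disjoint from $a$, you avoid the paper's separate argument ruling out large bigons between components.

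However, your \emph{direct} argument in Case~2 has a genuine gap. The triangle move supplied by Lemma~\ref{lem: moveable triangles} pushes the side of $T$ opposite $c^{-}$ \emph{across} $T$ and \emph{past} the vertex $p$, landing on the $c$-side; this is exactly how the innermost triangle $\Delta'$ with corner $c$ is created (see the proof of Theorem~\ref{thm:mainsmooth}). The pushed arc therefore crosses both strands through $p$ on the $c$-side --- in particular it creates a new crossing on $\iota$ just before $p$. So the move is \emph{not} localised away from $\iota$: after the move $\iota$ is no longer free of crossings, your ``no $n$-gon has a corner at $c$'' property is destroyed, and no contradiction follows. The paper does not try to preserve the same corner $c$; instead it passes to an \emph{outer} unoriented corner $c'$ of $\Delta'$, at a \emph{different} vertex, shows $c'$ still avoids quadrilaterals and pentagons, and uses that its combinatorial angle is strictly larger --- this maximal-angle comparison is precisely what terminates the process, and it cannot be replaced by the local rigidity of $\iota$. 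So keep your parenthetical as the actual argument and discard the direct one.
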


\begin{proof}
 Choose an orientation for $\alpha$.
 Let $a$ be an arc $a$ of $\alpha$ with intersection points, and let $a'$ be a subarc joining $\partial \Sigma \cap a$ to the first intersection point along $a$. Then one of the regions of $\Sigma - \alpha$ adjacent to $a'$, say $R$, must have an unoriented corner at $q$. Indeed, if the ``right side'' of $a'$ sees an oriented corner at $q$, then the ``left side'' of $a'$ must see an unoriented corner $c$ at $q$, as explained in Subsection~\ref{subsec: repel}.

Since $c$ is a corner of a region adjacent to the boundary of $\Sigma$, $c$ is not contained in any n-gon of $\alpha$, so Theorem~\ref{thm:mainsmooth}  shows that 
$\alpha$ has an unoriented smoothing without small monogons or bigons.

If $\alpha$ is a single arc, the unoriented smoothing is a single arc $\alpha'$, and if $\alpha'$ were not taut, then it would have a small monogon or bigon, so  $\alpha'$ must be taut.

Now assume that $\alpha$ has more than one arc. Since we started with an unoriented corner of a region $R$ adjacent to the boundary of $\Sigma$,
the proof of Theorem~\ref{thm:mainsmooth}  shows that we can find a sequence of triangle moves that transforms $\alpha$ into a taut arc $\alpha_1$ which has an unoriented corner $c_1$ that sees a region $R_1$ adjacent to the boundary of $\Sigma$, so
$c_1$ does not see any n-gon of $\alpha$ and its opposite corner $c_1'$ does not see a triangle. This shows that the unoriented smoothing at $c_1$ produces a multi-arc 
$\alpha_1'$ that does not have small monogons or bigons, but we still have to show that $\alpha_1'$ does not have large bigons.

A large bigon must traverse the same corner of the smoothing at least twice, but it cannot go over $c_1$ because $c_1$ is not contained in any polygonal disc bounded by $\alpha_1$. Hence, the bigon must go twice over $c_1'$, and so $c_1'$ must be the corner of a closed loop of $\alpha_1$. But $c_1'$ is an unoriented corner of an arc, so it is not the corner of a closed loop. 
\end{proof}

\begin{remark}
As mentioned in the introduction, there are taut arcs with arbitrarily large number of intersections, and where only one smoothing produces a taut arc.
The example in the centre of Figure~\ref{fig: onlyoneforarcs}, for instance, can be modified by making the arc travel alternately around the two shaded boundary components so that it locally looks like a figure $8$ curve. This increases the number of self-intersections, and produces examples with only one taut smoothing.
\end{remark}

It is not true that a non-simple arc that is taut with free endpoints has a smoothing that is taut with free endpoints. Indeed, a simple example  is given in Figure~\ref{fig: freearcs}.  However, the  best possible alternative holds:

\begin{figure}[h]
\centering
\includegraphics[height=1.7in]{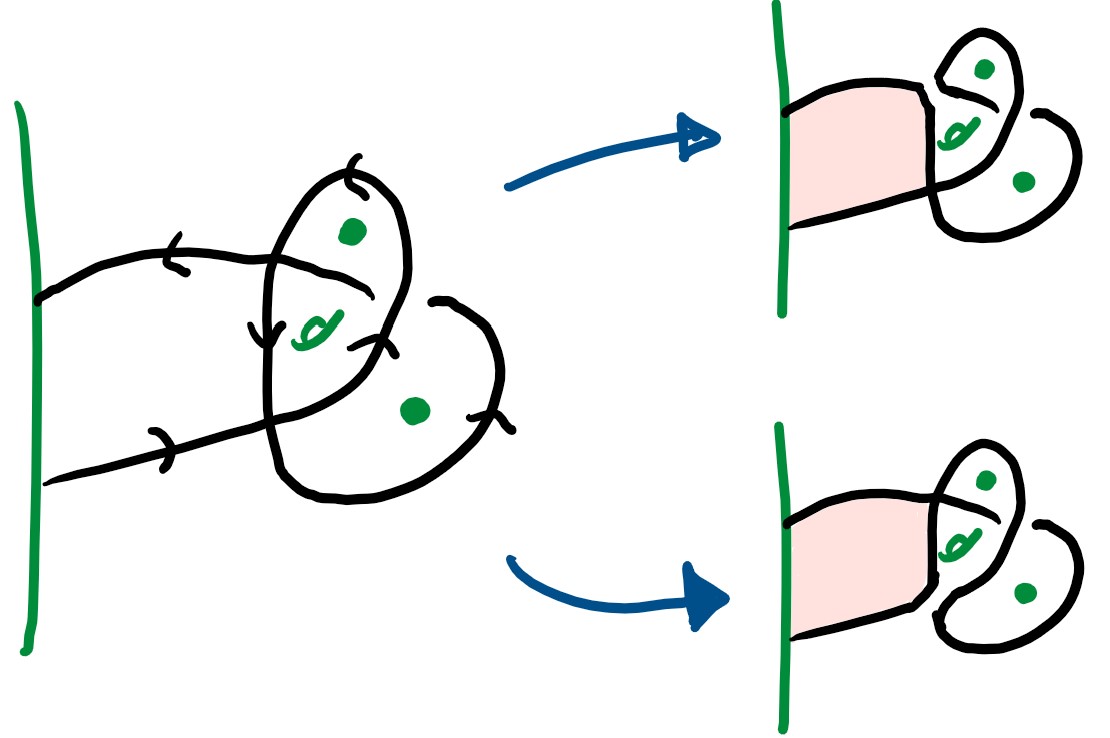}
\caption{A taut arc that cannot be smoothed to a taut arc if the endpoints are allowed to move freely on the boundary.}
\label{fig: freearcs}
\end{figure}

\begin{theorem}\label{cor:arcos libres}
    Let $\Sigma$ be an orientable surface and let $\alpha$ be a taut arc with free endpoints on $\Sigma$. If $\alpha$ has $k\geq 1$ self-intersections, then  $\alpha$ has a smoothing that cannot be homotoped to an arc with less than $k-2$ self-intersections. 
\end{theorem}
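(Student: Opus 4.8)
The plan is to reduce the free-endpoint case to the fixed-endpoint case (Theorem~\ref{thm: arcs}) by a double-fixing trick, at the cost of at most two extra self-intersections. First I would fix both endpoints of $\alpha$ in place and consider $\alpha$ as an arc with fixed endpoints. It need not be taut rel endpoints, but it becomes taut after removing at most two intersections: indeed, by Proposition~\ref{prop:small}, any non-taut-rel-endpoints arc has a small monogon or bigon, and since $\alpha$ is taut with free endpoints it has no monogons or bigons at all, so the only obstruction to being taut rel endpoints comes from half-bigons, which are $\partial$-bigons sitting against the boundary near the two endpoints. Pushing the endpoints to remove these half-bigons changes the self-intersection count by a controlled amount; I would argue that one needs at most one half-bigon move at each end, each changing the intersection number by at most $1$, so there is a taut-rel-endpoints representative $\alpha_0$ homotopic to $\alpha$ rel the (now new, fixed) endpoints with at least $k-2$ self-intersections. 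Care is needed here to see that the half-bigons at the two ends can be removed independently and that each removal drops the count by exactly one; this is where I expect to lean on the classification of small pathologies and a careful local picture.

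Next, apply Theorem~\ref{thm: arcs} to $\alpha_0$ (assuming it is still non-simple; if $k-2 \le 0$ the statement is vacuous since an arc with $<k-2 \le -1$ self-intersections is impossible, so we may assume $k \ge 3$ and $\alpha_0$ non-simple). This produces a smoothing $\alpha_0'$ of $\alpha_0$ at some double point $p$, which is an arc taut fixing its endpoints, with exactly $(\#\text{self-int of }\alpha_0) - 1 \ge k-3$ self-intersections. Now I need to transfer this smoothing back to $\alpha$: the point $p$ corresponds to a double point of $\alpha$ (via the homotopy from $\alpha$ to $\alpha_0$, which is just the half-bigon removals followed by the triangle moves inside Theorem~\ref{thm: arcs}, none of which destroy $p$), and smoothing $\alpha$ at that point gives an arc $\alpha'$ that is homotopic with free endpoints to $\alpha_0'$, using Lemma~\ref{lem: smoothing invariants htpy class} (smoothing commutes with triangle moves) together with the observation that half-bigon moves near an endpoint also commute with smoothing at a point away from that endpoint. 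Since $\alpha_0'$ is taut rel its endpoints, a fortiori it cannot be homotoped with free endpoints to have fewer than $(\#\text{self-int of }\alpha_0') $ self-intersections minus the half-bigon slack; tracking the bookkeeping, $\alpha'$ cannot be homotoped (freely) to have fewer than $k-2$ self-intersections.

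Let me recount the budget, since the constant is the whole point. Starting from $\alpha$ with $k$ self-intersections: fixing endpoints and removing at most one half-bigon at each of the two ends loses at most $2$, giving $\alpha_0$ with $\ge k-2$; smoothing once via Theorem~\ref{thm: arcs} loses exactly $1$, giving $\alpha_0'$ with $\ge k-3$ self-intersections, taut rel endpoints. Then $\alpha'$, the corresponding smoothing of $\alpha$, is homotopic (rel endpoints) to $\alpha_0'$ up to re-inserting the $\le 2$ half-bigons, so any free-endpoint homotopy of $\alpha'$ that reduces self-intersections below $k-2$ would, after re-fixing the endpoints, yield a representative of $\alpha_0'$ rel endpoints with fewer than $k-3$ self-intersections, contradicting tautness of $\alpha_0'$. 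The main obstacle I anticipate is the careful local analysis showing that the two endpoint half-bigons can be removed independently and that each removal costs at most one self-intersection — in principle removing a half-bigon could cascade, so I would want to take a half-bigon of minimal combinatorial area at each end (invoking Lemma~\ref{lem: minimalarea}'s analogue for half-bigons, or a direct argument) to guarantee it is small and its removal is a clean local move. A secondary subtlety is checking that $p$ is not an endpoint-adjacent double point destroyed by the half-bigon moves; if the first intersection along the arc is involved, one may need to choose the smoothing point in Theorem~\ref{thm: arcs} away from the boundary-adjacent region, which the proof of that theorem in fact already does.
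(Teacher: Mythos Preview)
Your approach rests on a reversed understanding of the two tautness notions. An arc that is taut with \emph{free} endpoints already has no monogons, bigons, or half-bigons; in particular it is automatically taut with \emph{fixed} endpoints. So there is nothing to ``clean up'' before applying Theorem~\ref{thm: arcs}: one can apply it to $\alpha$ itself and obtain a smoothing $\alpha'$ with exactly $k-1$ self-intersections that is taut with fixed endpoints. The step where you remove half-bigons from $\alpha$ near the two ends is vacuous ($\alpha$ has none), and consequently your bookkeeping never engages with the real difficulty.

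That difficulty lies \emph{after} the smoothing: $\alpha'$ is taut with fixed endpoints, but it may acquire half-bigons and hence fail to be taut with free endpoints. The whole content of the theorem is to bound how many intersections can be lost this way, and the paper's proof shows that $\alpha'$ has \emph{at most one} half-bigon (this is the substance of the two Claims inside the proof, and it uses a genuine argument with covering translations and thinness of half-bigons). Your proposal never addresses this. Your final budget argument also does not close: if $\alpha'$ could be freely homotoped to an arc $\beta$ with fewer than $k-2$ self-intersections, you cannot conclude anything about fixed-endpoint representatives of $\alpha_0'$, because moving $\beta$'s endpoints back to the original positions could \emph{increase} the intersection count, not decrease it. The inequality goes the wrong way for a contradiction.

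In short, the correct skeleton is: $\alpha$ is already taut rel endpoints $\Rightarrow$ Theorem~\ref{thm: arcs} gives $\alpha'$ taut rel endpoints with $k-1$ intersections $\Rightarrow$ prove $\alpha'$ has at most one half-bigon $\Rightarrow$ done. The missing (and nontrivial) ingredient in your proposal is exactly that middle step.
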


\begin{proof}
A 1-manifold in $\Sigma$ is taut with free endpoints if its preimage in $\widetilde \Sigma$ does not have any monogon, bigon or half-bigon.
We showed in Theorem~\ref{thm: arcs} that a taut arc $\alpha$ has a connected smoothing $\alpha'$ that does not have have any monogon or bigon.
We will show that $\alpha'$ has the desired property.
The proof is divided into two steps: first we show that if an arc is taut with fixed endpoints, then the half-bigons in the arc are small, then we show that a smoothing of an arc that is taut with free endpoints has at most one half-bigon.

\begin{claim}\label{clm:semismall}
  If an arc $\alpha'$ is taut with free endpoints, then the half-bigons of  $\alpha'$ are small.
\end{claim}

\begin{proof}
    
Assume that a pair of arcs $a$ and $a'$ above $\alpha$ in $\widetilde \Sigma$ form a half-bigon $S$.
Then $a$ and $a'$ have an endpoint at the same boundary component $b$ of $\widetilde \Sigma$, and $a'$ cannot be a translate of $a$ by a covering translation of $\widetilde \Sigma$ along $b$ (because those translates of $a$ must be disjoint).
Therefore if $a$ and $a'$ are oriented consistently with $\alpha$, then $S$ consists of an initial arc of $a$, and a final arc of $a'$ together with an arc of $b$.
If $S$ is not small, then the projections of $a$ and $a'$ overlap in $\alpha$, let
$o$ and $o'$ be two maximal subarcs of $a$ and $a'$ with the same image in $\alpha$.
Then $o$ is a final subarc of $a$ and $o'$ is an initial subarc of $a'$ that meet at the intersection point of $a$ an $a'$. Indeed,  the other possibilities would be that $o$ starts in $\partial \widetilde \Sigma$ or 
that $o'$ ends in $\partial \widetilde \Sigma$, but then $o'$ would start in $\partial \widetilde \Sigma$ or $o$ would end in  $\partial \widetilde \Sigma$, which is impossible. 

Let $g$ be the covering translation of $\widetilde \Sigma$ such that $a'=ga$.
As $\Sigma$ admits a hyperbolic metric that makes its boundary geodesic,
$\widetilde \Sigma$ is identified with a closed subset of the hyperbolic plane with geodesic boundary, and $g$ corresponds to a hyperbolic isometry that leaves some hyperbolic geodesic $l$ invariant.
Then $l$ is  the axis of $g$ and the endpoints of $l$ act as attracting and repelling points of the action of $g$ on the points at infinity of $\widetilde \Sigma$.

We will argue that if a half-bigon formed by $a$ and $a'$ is not small then the axis of $g$ cannot exist.

Since $a$ and $ga$ meet at one point, the axis of $g$ cannot meet $a$ or $ga$, and it must lie in one of the two complementary regions determined by $a$ and $ga$ with consistent orientations (see Figure~\ref{fig: semi1}).
For the same reason, as $ga$ and $g^2a$ meet at one point, the axis of $g$ cannot meet $ga$ or $g^2a$, and it must lie in one of the two complementary regions determined by $ga$ and $g^2a$ with consistent orientations.
But as $g^2a$ crosses $ga$ and does not end in $b$, it must also cross $a$, so the two regions where the axis of $g$ must lie are disjoint.
\end{proof}

\begin{figure}[h]
\centering
\includegraphics[height=1.7in]{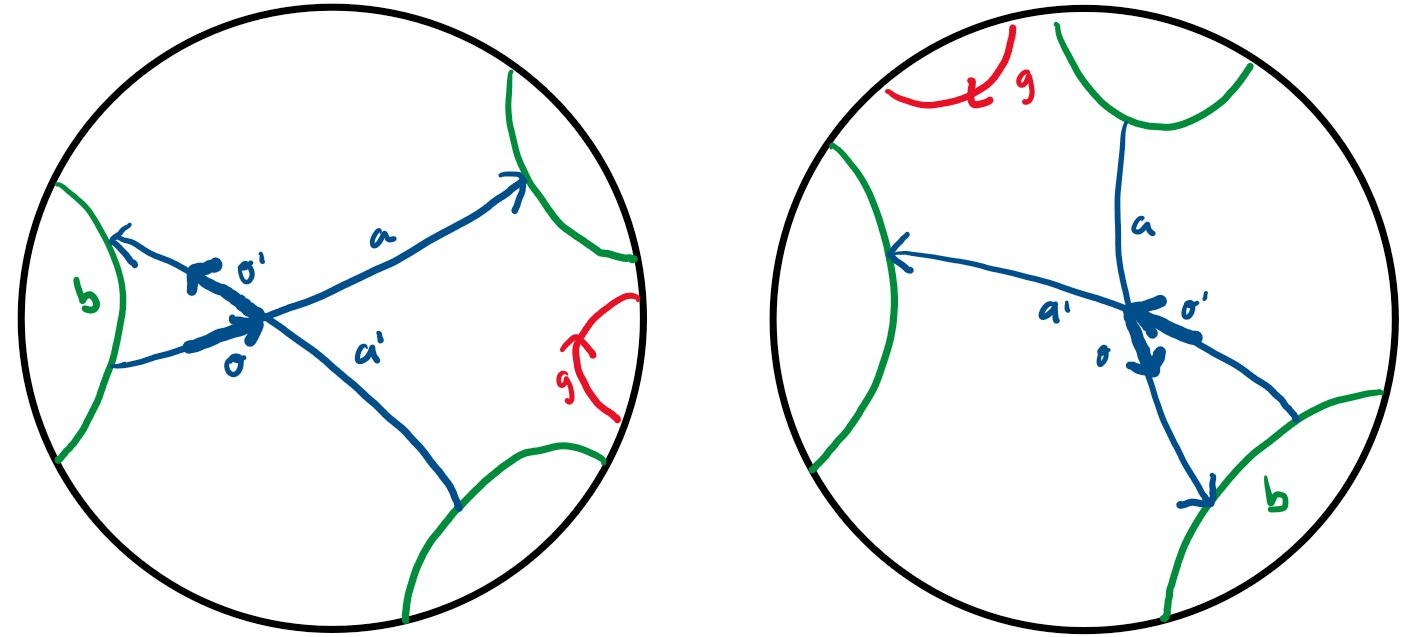}
\caption{Ingredients in the proof of Claim~\ref{clm:semismall}.}
\label{fig: semi1}
\end{figure}

\begin{claim}\label{clm:semionlyone}
    If an arc $\alpha$ is taut with free endpoints, and an arc $\alpha'$ is a smoothing of $\alpha$, then $\alpha'$ has at most one half-bigon.
\end{claim}

\begin{proof}
A half-bigon can only occur if $\alpha' $ (and therefore $\alpha$) has its endpoints in the same component $b$ of  $\partial \Sigma$, so we will assume that this is the case.

By Claim~\ref{clm:semismall}, any half-bigon $S$ of $\alpha'$ is small, so it goes over each corner of the smoothing at most once, and so $S$ is made of 3 or 4 subarcs of $\alpha$ and a subarc of $b$, and at least one of the corners of $S$ lies in an arc $a$ adjacent to $b$. Figure~\ref{fig: semi2} shows  the 3 possible ways in which  $S$ can arise.

\begin{figure}[h]
\centering
\includegraphics[height=1in]{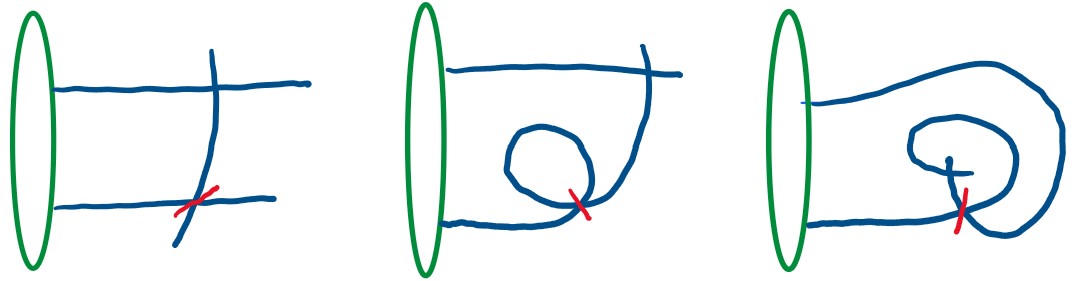}
\caption{Possible half-bigons after smoothing.}
\label{fig: semi2}
\end{figure}

Consider the universal cover $\widetilde \Sigma$.
Since $\alpha$ is taut with free endpoints then $\alpha$ has no half-bigons, so
the arcs above $\alpha$ adjacent to a line $\tilde b$ above $b$ are disjoint.
If $\alpha$ starts and ends in $b$, these arcs form two orbits under the action of the stabiliser $g$ of $\tilde b$, with the arcs
that start in $\tilde b$ alternating with the arcs that end in $\tilde b$.
If the smoothing $\alpha'$ has half-bigons, they lift to half-bigons adjacent to $\tilde b$.
We claim that the half-bigons are \emph{thin}, in the sense that each one lies between two consecutive arcs $a$ and $a'$ above $\alpha$. Indeed, if a half-bigon that has a corner of the smoothing at $a$ is not thin, then there is a half-bigon that starts at $a$ and ends at a translate $ga$, but this half-bigon cannot be small as its ends overlap in $\Sigma$.

Observe also that if there is a half-bigon on one side of $a$, then there can be no thin half-bigons on the other side of $a$. This follows from the fact that when viewed from $\tilde b$, the first corners of the smoothing along $a$ and $a'$  point to opposite sides.

Now, if there are two half-bigons between $a$ and $a'$, then they must overlap.  There are two possibilities:

\begin{enumerate}
\item  One half-bigon is contained in the other, as in the three left-most cases of Figure~\ref{fig: semi3}. If this happens, then the larger half-bigon must exit the region between the two adjacent arcs when it reaches the corner of the smaller half-bigon, which implies that it is not thin.

\item Each half-bigon has a corner of the smoothing that lies outside the other half-bigon, as in the three right-most cases of Figure~\ref{fig: semi3}. 
Then by considering all the possible configurations we can see that either
one of the corners of the smoothing is the inner corner of a triangle, 
or two corners of the smoothing are the corners of a trapezium. But as noted in Remark~\ref{rmk:connectedsmooth4casesreverse}, both scenarios are incompatible with the fact that $\alpha'$ is taut with fixed endpoints. 
\end{enumerate}
\end{proof}

\begin{figure}[h]
\centering
\includegraphics[height=1.05in]{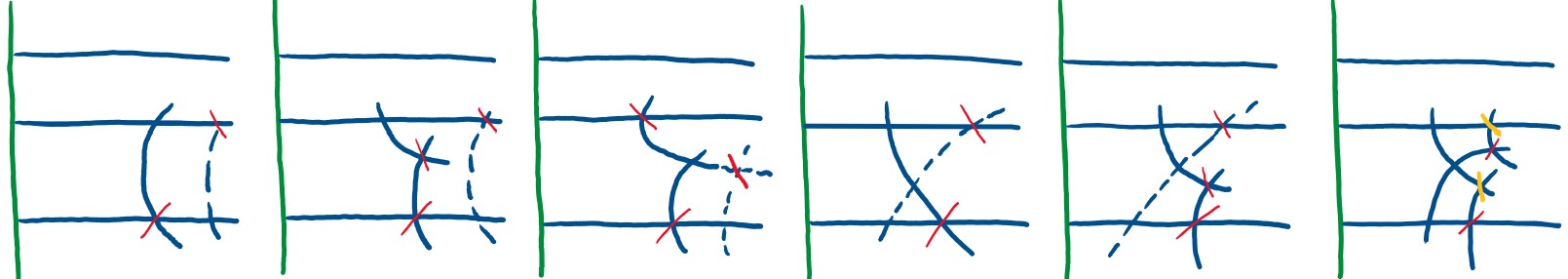}
\caption{Some potential  configurations in the proof of Claim~\ref{clm:semionlyone}.}
\label{fig: semi3}
\end{figure}

   It now follows that $\alpha'$ cannot be homotoped to have less than $k-2$ self-intersections, since performing a homotopy on $\alpha'$ that removes the small half-bigon removes exactly one self-intersection point and produces an arc with no half-bigons.
\end{proof}

\section{$k$-systoles and coarse orders on length spectra}\label{sec: geometry}

We now explain how to use the topological results obtained in Section~\ref{sec:taut smoothing} to extract geometric information.

If $[\alpha]$ is a free homotopy class of 1-manifolds on a compact surface $\Sigma$ and $g$ is a Riemannian metric on $\Sigma$ (that makes $\partial \Sigma$ convex if $\Sigma$ has boundary), we will denote by $\ell_g[\alpha]$ the minimum length among all the 1-manifolds in $[\alpha]$, and by $i[\alpha]$ the minimum number of self-intersections among all those 1-manifolds.

We begin with an application of Theorem~\ref{thm: arcs} and Theorem~\ref{cor:arcos libres}.

\begin{corollary}\label{cor: arcs geometryfull}
 For any orientable Riemannian surface $\Sigma$ with convex boundary and $\chi(\Sigma) < 0$, a shortest taut arc with fixed
  endpoints and at least $k$ self-intersections has exactly $k$ self-intersections. A  shortest taut arc with  free
  endpoints on $\partial \Sigma$ and at least $k$ self-intersections has at most $k+1$ self-intersections.
\end{corollary}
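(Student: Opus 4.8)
\textbf{Proof plan for Corollary~\ref{cor: arcs geometryfull}.}

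The plan is to run the standard ``descent argument'' on the length spectrum, using the topological smoothing theorems as the engine that produces, from any shortest arc with $\geq k$ self-intersections, a comparably short arc with fewer self-intersections until the count drops into the allowed range. First I would fix the Riemannian metric $g$ (with $\partial\Sigma$ convex) and recall the two elementary geometric facts the authors flagged: primitive geodesics meet transversally, and rounding off a corner of a piecewise-geodesic strictly shortens it. In particular, if $\alpha$ is a \emph{taut} arc whose class has minimal self-intersection number $i[\alpha]$ realised geometrically, then the shortest representative of that class is a geodesic arc, and performing a smoothing at a double point of that geodesic, then pulling the result tight (homotoping to its shortest representative within its own class, rel endpoints or with free endpoints as appropriate), produces a geodesic arc that is \emph{strictly shorter} than $\alpha$, since the smoothing introduces a non-geodesic corner that can be rounded off. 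This strict-length-decrease observation is the crux: it means that among all taut arcs with at least $k$ self-intersections, a \emph{length-minimiser} $\gamma$ cannot admit a smoothing to a taut arc (or, in the free case, to one with $\geq k$ self-intersections), for such a smoothing would yield something shorter but still with $\geq k$ self-intersections — wait, more carefully: the smoothing \emph{drops} the self-intersection count, so one has to argue the minimiser must sit exactly at the boundary $i[\gamma]=k$ (fixed case) or $\leq k+1$ (free case).

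Concretely, for the \textbf{fixed-endpoint case}: let $\gamma$ be a shortest taut arc (rel its fixed endpoints) among all classes with $i\geq k$. Suppose for contradiction $i[\gamma]=m>k$. Since $m\geq 1$, $\gamma$ is non-simple, so Theorem~\ref{thm: arcs} (quoted as Theorem~A) gives a smoothing of $\gamma$ producing an arc $\gamma'$ that is taut fixing its endpoints and has no more components than $\gamma$; since $\gamma$ is a single arc, $\gamma'$ is a single arc, and by Remark~\ref{rmk:orientations and connectedness} smoothings do not change the number of arc components, so indeed $\gamma'$ is an arc. By Lemma~\ref{prop:pathologies}/the definition of taut smoothing, $i[\gamma']=m-1\geq k$. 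Taking geodesic representatives and rounding the corner created by the smoothing shows $\ell_g[\gamma'] < \ell_g[\gamma]$, contradicting minimality of $\gamma$. Hence $i[\gamma]=k$, which is exactly the assertion (a shortest arc with $\geq k$ self-intersections has exactly $k$). One should also note the base-case hypotheses $\chi(\Sigma)<0$ guarantee such essential arcs exist and that $k$-fold-self-intersecting classes are non-empty for every $k$ (a simple counting/concatenation argument, or just iterate figure-eight constructions).

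For the \textbf{free-endpoint case} the same descent is run with Theorem~\ref{cor:arcos libres} (Theorem~\ref{cor:arcos libres} in the excerpt) in place of Theorem~\ref{thm: arcs}: let $\gamma$ be a shortest taut arc with free endpoints among classes with $i\geq k$, and suppose $i[\gamma]=m\geq k+2$. Then $m\geq 1$, and Theorem~\ref{cor:arcos libres} produces a smoothing $\gamma'$ that cannot be homotoped to have fewer than $m-2\geq k$ self-intersections, i.e.\ $i[\gamma']\geq k$, while (again) $\gamma'$ is a single arc and $\ell_g[\gamma']<\ell_g[\gamma]$ by the rounding argument — contradiction. Therefore $i[\gamma]\leq k+1$, as claimed. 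The one point requiring a little care, which I expect to be the \emph{main obstacle}, is the interaction between ``taut'' (combinatorial minimality of self-intersections in the homotopy class) and ``shortest geodesic'' (geometric minimality): one must be sure that the length-minimiser over the \emph{infinite} family of homotopy classes with $i\geq k$ is attained (this uses compactness of $\Sigma$ and properness of length on homotopy classes, the classical Gauss--Bonnet/collar-type bound that only finitely many classes have length below any fixed bound), and that passing to the geodesic representative does not accidentally decrease the self-intersection number below the taut value — which it does not, precisely because ``taut'' already means the minimum is realised, and by Freedman--Hass--Scott any shortest geodesic representative is taut. Assembling these pieces, together with the observation that rounding a smoothing corner strictly shortens, completes the proof.
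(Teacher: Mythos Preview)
Your proposal is correct and follows essentially the same argument as the paper: assume the length-minimiser $\gamma$ has too many self-intersections, apply Theorem~\ref{thm: arcs} (respectively Theorem~\ref{cor:arcos libres}) to obtain a smoothing $\gamma'$ still having at least $k$ self-intersections, and derive $\ell_g[\gamma']<\ell_g[\gamma]$ from the corner-rounding observation, a contradiction. The paper's own proof is slightly terser---it does not spell out existence of the minimiser or the Freedman--Hass--Scott point that shortest geodesic representatives are taut---but the logical skeleton is identical.
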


\begin{proof}
Assume that this is not the case, so that for some $k\geq 0$, a shortest taut arc $\alpha$ with endpoints $p$ and $q$ and at least $k$ self-intersections has $k+m$ self-intersections for some $m \geq 1$. By Theorem~\ref{thm: arcs}, $\alpha$ has a taut smoothing $\alpha'$, so $\alpha'$ has $k+m-1<k+m$ self-intersections. As $\alpha'$ is obtained from cutting and rejoining two arcs of $\alpha$ (which does not change the length) and rounding-out the resulting corners (which reduces length), then $l_g[\alpha']<l_g[\alpha]$, contradicting our initial hypothesis. 
The proof of the second assertion follows by  the same reasoning as before, but applying Theorem~\ref{cor:arcos libres} rather than Theorem~\ref{thm: arcs}.
\end{proof}

The conclusion of Corollary~\ref{cor: arcs geometryfull} is optimal:

\begin{proposition}\label{prop:noshortarc}
    The sphere with 3 holes admits a Riemannian metric for which the shortest arc with free endpoints and at least 3 self-intersections has 4 self-intersections. 
\end{proposition}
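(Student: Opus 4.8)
The plan is to exhibit an explicit homotopy class of arcs on the three-holed sphere $P$ whose taut representative has exactly $3$ self-intersections, together with a Riemannian (or hyperbolic) metric in which: every arc class with at least $3$ self-intersections that is \emph{shorter} than our chosen arc actually has at least $4$ self-intersections, while our chosen arc itself can be perturbed only to an arc with $3$ self-intersections but no shorter representative exists in a class with fewer intersections that is still this short. Concretely, I would take the arc $\alpha$ suggested by Figure~\ref{fig: freearcs} (the arc that is taut with free endpoints but whose only smoothing drops the intersection number by more than one, down to $1$) and build a metric where the ``competitors'' are forced to be long. Since Corollary~\ref{cor: arcs geometryfull} already guarantees that the shortest arc with at least $k$ self-intersections has \emph{at most} $k+1$, the content here is producing a metric realising the value $k+1$ exactly for $k=3$, i.e.\ showing the bound cannot be improved to $k$.

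First I would fix the topology: let $\alpha$ be a free-homotopy class (rel free endpoints on $\partial P$) of arcs with $i[\alpha]=3$ such that $\alpha$ has no smoothing that is taut with free endpoints; such a class exists by the example of Figure~\ref{fig: freearcs} and the discussion around Theorem~\ref{cor:arcos libres}, which tells us that any smoothing of $\alpha$ lands in a class with $\geq k-2 = 1$ self-intersections, and in this example the minimum actually achieved is $1$, not $2$. So \emph{every} arc class strictly ``below'' $\alpha$ via a single smoothing has either $3$ (if the smoothing is not taut-with-free-endpoints but still not improvable past $2$) or at most $1$ self-intersection. The key point is to arrange a metric $g$ on $P$ so that: (i) every arc class with exactly $3$ self-intersections other than (classes at least as long as) $[\alpha]$ is realised by a geodesic that is \emph{longer} than the $g$-geodesic arc in $[\alpha]$; and (ii) $[\alpha]$ itself, being unsmoothable to a taut arc, forces the shortest arc with $\geq 3$ self-intersections to actually have $4$. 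Hmm — more carefully, the statement asserts the \emph{shortest} arc with $\geq 3$ self-intersections has $4$, so what I actually need is a metric in which no arc class with exactly $3$ self-intersections is shorter than some particular arc class with $4$ self-intersections, and that arc-with-$4$ is the shortest among all classes with $\geq 3$.

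So the real construction is: choose a hyperbolic metric on $P$ (or a Riemannian metric with convex boundary) with one extremely short boundary geodesic $b$, and consider arcs that wind many times around $b$. An arc winding $n$ times around the thin cuff $b$ accumulates $\Theta(n)$ self-intersections but its length stays close to the length of a fixed ``core'' arc plus $n$ times the tiny cuff length, so by pinching $b$ we make a length-$4$-intersection arc cheaper than the generic length-$3$-intersection arcs. I would (a) enumerate, up to the symmetries of $P$, the finitely many combinatorial types of taut arcs with exactly $3$ self-intersections on $P$ — there are only a few, since $P$ has small complexity — and compute or bound their lengths in terms of the cuff lengths $\ell(b_1),\ell(b_2),\ell(b_3)$; (b) identify a specific arc class $\beta$ with $4$ self-intersections (the one obtained by an extra wrap around the pinched cuff) whose length is $\ell_0 + 4\epsilon + o(\epsilon)$ where $\ell_0$ is a fixed constant and $\epsilon=\ell(b_1)$; (c) show that every $3$-self-intersection class has length bounded \emph{below} by some $\ell_1 > \ell_0$ independent of $\epsilon$ (because forcing $3$ self-intersections without wrapping the thin cuff requires traversing the ``fat'' part of $P$ a bounded-below number of times). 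Then for $\epsilon$ small, $\beta$ beats every $3$-self-intersection arc, and since by Corollary~\ref{cor: arcs geometryfull} nothing with $\geq 3$ self-intersections is shorter than the shortest $3$- or $4$-one, the shortest arc with at least $3$ self-intersections is $\beta$, with $4$ self-intersections.

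The main obstacle I expect is step (c): rigorously bounding below the length of \emph{every} arc with exactly $3$ self-intersections, uniformly as the metric degenerates. The subtlety is that an arc could have $3$ self-intersections while still dipping slightly into the thin part, so a crude collar-lemma estimate may not separate the cases cleanly; I would handle this by a careful case analysis of how a $3$-self-intersection taut arc can interact with the collar of $b_1$ (using that a taut arc has no small bigons/monogons/half-bigons, hence crosses each collar in a controlled pattern), showing that achieving three self-intersections ``inside the collar'' is impossible without also producing a bigon, so the self-intersections must occur in the thick part, which has a definite diameter lower bound. A secondary, more bookkeeping obstacle is verifying that the candidate metric genuinely has convex boundary (trivial for hyperbolic metrics with geodesic boundary, so I would simply work with a hyperbolic pair of pants) and that the arc $\beta$ is actually taut with the claimed $4$ self-intersections — again handled by exhibiting it with no half-bigons, bigons, or monogons in the universal cover.
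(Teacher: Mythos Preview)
Your proposal has a genuine gap in the degeneration argument, and it is precisely step~(c) that fails. The family of arcs you use in step~(b) to manufacture a cheap $4$-self-intersection arc---arcs winding $n$ times around the thin cuff $b$---does not skip the value $3$: the member of the same family with one fewer winding is a taut arc with exactly $3$ self-intersections, and it is strictly shorter (its length is $\ell_0 + 3\epsilon + o(\epsilon)$ versus $\ell_0 + 4\epsilon + o(\epsilon)$, sharing the same core $\ell_0$). Your proposed fix, that ``achieving three self-intersections inside the collar is impossible without producing a bigon,'' is simply false: an arc from $b_2$ to $b_2$ winding $n$ times around $b_1$ is taut with free endpoints and has $n-1$ self-intersections, all of them arising from the spiralling near the collar. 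So there is no uniform lower bound $\ell_1 > \ell_0$ for the $3$-self-intersection classes, and the pinching construction cannot separate $3$ from $4$. (In the hyperbolic setting there is a second problem: the collar width grows like $\log(1/\epsilon)$, so arcs that genuinely wind close to $b_1$ have length tending to infinity, not to $\ell_0$; but even in a general Riemannian metric with a short-and-narrow tube, the first objection stands.)

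The paper's argument is quite different and avoids any degeneration. It starts not from a $3$-self-intersection class but from a specific taut arc $\alpha$ with \emph{four} self-intersections that has \emph{no} smoothing taut with free endpoints. It then invokes \cite{NC01} to obtain a Riemannian metric in which $\alpha$ is the shortest geodesic in its class and every shorter geodesic arc lies in a regular neighbourhood $R$ of $\alpha$; the metric is further chosen so that each of the $9$ segments of $\alpha$ has length~$1$ and any arc in $R$ made of $n$ segments has length $>n(1-\epsilon)$. The problem then reduces to a finite combinatorial check: any $3$-self-intersection taut arc in $R$ built from at most $9$ segments would have to be a smoothing of $\alpha$, but $\alpha$ has none that are taut with free endpoints; hence any such competitor uses at least $10$ segments and is longer than $\alpha$. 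The key idea you are missing is to let the candidate $4$-self-intersection arc \emph{dictate} the metric via \cite{NC01}, rather than to search for it inside a degenerating family.
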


\begin{proof}

Let $\alpha$ be the arc in the left of Figure~\ref{fig:noshortarc}.
    Since $\alpha$ is taut with free endpoints, by~\cite[Theorem 1.1]{NC01} there is a Riemannian metric on $\Sigma$ for which $\alpha$ is the shortest geodesic in its free homotopy class (with free endpoints), and every geodesic arc in $\Sigma$ shorter than $\alpha$ lies in a regular neighbourhood $R$ of $\alpha$.

As $\alpha$ has $4$ self-intersections, $\alpha$ is made of $9$ segments. Every arc in $R$ is homotopic to one made of copies of the segments of $\alpha$. The metric can be chosen so that each segment has length exactly $1$ and every geodesic arc in $R$ made of $n$ segments  has length larger than $n(1-\epsilon)$ for any prescribed $\epsilon>0$.

To see that $\alpha$ is the shortest geodesic arc with at least $3$ self-intersections, it is enough to prove that there are no  arcs in $R$ that are taut with free endpoints,  have $3$ self-intersections, and are made of less than $10$ segments of $\alpha$. Since $\alpha$ has no smoothings that are taut with free endpoints, such an arc must go over some segments of $\alpha$ more than once. Analysing all the possibilities, the shortest arc with 3 self-intersections that is taut with free endpoints is the arc $\beta$ shown in the right of  Figure~\ref{fig:noshortarc}. Now, $\beta$ uses $10$ segments of $\alpha$, and therefore is longer than $\alpha$.
\end{proof}

\begin{figure}[h]
\centering
\includegraphics[height=.9in]{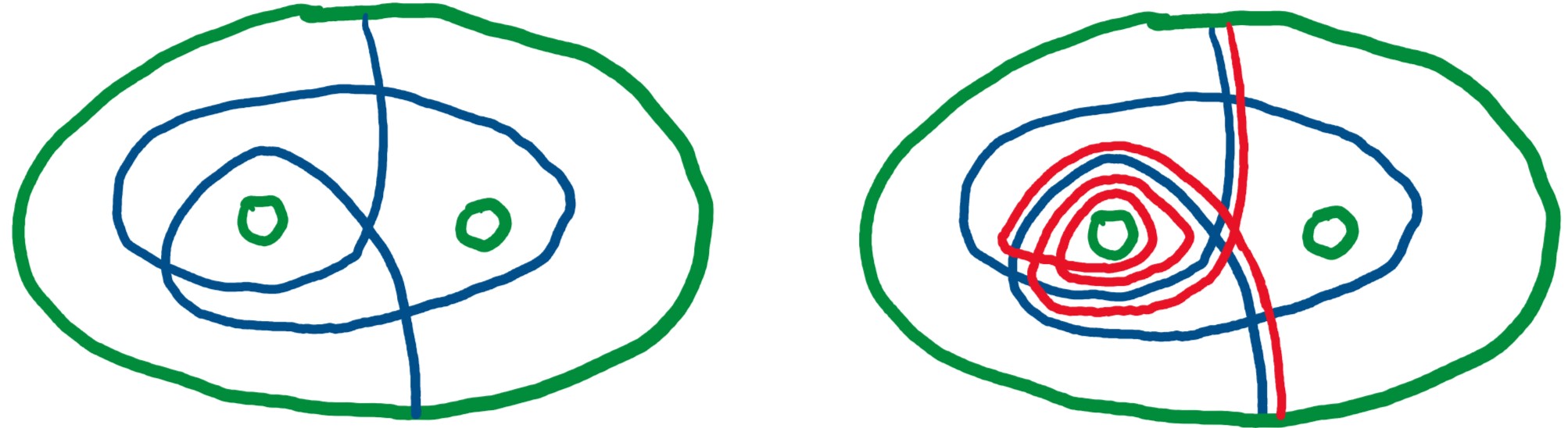}
\caption{The arc $\alpha$ (blue) and the arc $\beta$ (red) in the proof of Proposition~\ref{prop:noshortarc}.}
\label{fig:noshortarc}
\end{figure}

We note that the argument in Proposition~\ref{prop:noshortarc} uses a Riemannian metric that is not hyperbolic. Thus, the following remains unresolved:

\begin{question}
Let $g$ be a hyperbolic metric on the 3-holed sphere $\Sigma$.
     Does a  shortest taut arc with  free endpoints on $\partial \Sigma$ and at least $3$ self-intersections have exactly $3$ self-intersections?
\end{question}

 The proof of the first assertion in Corollary~\ref{cor: arcs geometryfull} actually shows:

\begin{corollary}\label{cor:universalarc}
In an orientable surface $\Sigma$, for every free homotopy class $[\alpha]$ of non-simple arcs with fixed endpoints, there exists another free homotopy class $[\beta]$ with the same endpoints and with $i[\beta]=i[\alpha]-1$
such that 
$$\ell_g[\beta] < \ell_g[\alpha]$$ for every Riemannian metric $g$ on $\Sigma$.
\end{corollary}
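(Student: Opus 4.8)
The statement to prove, Corollary~\ref{cor:universalarc}, asserts that for every free homotopy class $[\alpha]$ of non-simple arcs with fixed endpoints there is another class $[\beta]$ with the same endpoints, one fewer self-intersection in the minimal count, and strictly shorter minimal length in \emph{every} Riemannian metric.

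The plan is to extract $[\beta]$ directly from the topological smoothing theorem. First I would choose any taut representative $\alpha_0$ of $[\alpha]$ with fixed endpoints; by Theorem~\ref{thm: arcs}, $\alpha_0$ admits a smoothing $\alpha_0'$ that is taut fixing its endpoints and has no more components than $\alpha_0$ — hence, since $\alpha_0$ is a single arc, $\alpha_0'$ is again a single arc. I would then set $[\beta]$ to be the free homotopy class (rel endpoints) of $\alpha_0'$. Because $\alpha_0$ is taut and has $i[\alpha]$ self-intersections, and the smoothing removes exactly one double point, $\alpha_0'$ has $i[\alpha]-1$ self-intersections; and since $\alpha_0'$ is taut fixing its endpoints, $i[\beta] = i[\alpha]-1$. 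By Lemma~\ref{lem: smoothing invariants htpy class} this class $[\beta]$ depends only on $[\alpha]$ (and on the chosen double point), so the construction is well-defined.

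The metric inequality is then exactly the argument already run inside the proof of Corollary~\ref{cor: arcs geometryfull}, now applied in reverse as a statement about the two fixed classes. Fix an arbitrary Riemannian metric $g$ on $\Sigma$ (with $\partial\Sigma$ convex if there is boundary) and let $\gamma$ be a length-minimising representative of $[\alpha]$ with its endpoints at $p,q$; its length is $\ell_g[\alpha]$. The smoothing operation at the distinguished double point of $\gamma$ cuts and rejoins two subarcs of $\gamma$ — an operation that does not change total length — and then rounds out the two newly created corners, which strictly decreases length. The result is an arc in the class $[\beta]$ of length strictly less than $\ell_g[\gamma] = \ell_g[\alpha]$, whence $\ell_g[\beta] < \ell_g[\alpha]$. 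Since $g$ was arbitrary, this holds for all metrics.

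The one point that needs care — and is the main (though minor) obstacle — is confirming that the smoothed arc $\gamma'$ really does represent the \emph{same} fixed-endpoint homotopy class $[\beta]$ that was defined via the taut representative $\alpha_0$, rather than some other class. This is where Lemma~\ref{lem: smoothing invariants htpy class} is essential: $\gamma$ and $\alpha_0$ are freely homotopic taut arcs (rel endpoints) with primitive components, so there is a bijective correspondence of their double points, and smoothing at corresponding points yields homotopic results. One must also note that the rounding-out of corners is an isotopy of the arc, so it does not change the homotopy class. With these observations the identification $[\gamma'] = [\beta]$ is immediate and the proof is complete.

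\begin{proof}
Let $\alpha_0$ be a taut representative of $[\alpha]$ fixing its endpoints, say at $p,q\in\Sigma$. By Theorem~\ref{thm: arcs}, $\alpha_0$ has a smoothing $\alpha_0'$ that is taut fixing its endpoints and has no more components than $\alpha_0$; since $\alpha_0$ is a single arc, $\alpha_0'$ is a single arc. Let $[\beta]$ be the free homotopy class rel endpoints of $\alpha_0'$. The smoothing removes exactly one double point, so $\alpha_0'$ has $i[\alpha]-1$ self-intersections, and as it is taut fixing its endpoints, $i[\beta]=i[\alpha]-1$. By Lemma~\ref{lem: smoothing invariants htpy class}, the class $[\beta]$ is independent of the choice of taut representative.

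Now fix an arbitrary Riemannian metric $g$ on $\Sigma$ (making $\partial\Sigma$ convex if $\Sigma$ has boundary), and let $\gamma$ be a length-minimising representative of $[\alpha]$ with endpoints $p,q$, so $\ell_g[\alpha]$ equals the length of $\gamma$. Since $\gamma$ and $\alpha_0$ are freely homotopic taut arcs rel endpoints with primitive components, Lemma~\ref{lem: smoothing invariants htpy class} provides a double point of $\gamma$ whose smoothing yields an arc $\gamma'$ freely homotopic rel endpoints to $\alpha_0'$; thus $\gamma'$ represents $[\beta]$. Performing this smoothing cuts and rejoins two subarcs of $\gamma$, which does not change total length, and then rounds out the two corners created at the smoothed point, which strictly decreases length (and is an isotopy, hence does not change the homotopy class). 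Therefore $\ell_g[\beta]\le\mathrm{length}(\gamma')<\mathrm{length}(\gamma)=\ell_g[\alpha]$. Since $g$ was arbitrary, the inequality holds for every Riemannian metric on $\Sigma$.
\end{proof}
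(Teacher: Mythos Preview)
Your proof is correct and follows the same approach as the paper, which simply states that a taut smoothing of a taut representative of $[\alpha]$ yields the desired class $[\beta]$. You have filled in the details that the paper leaves implicit (in particular the length comparison, which the paper defers to the argument already given in the proof of Corollary~\ref{cor: arcs geometryfull}), and you correctly invoke Lemma~\ref{lem: smoothing invariants htpy class} to ensure that smoothing the geodesic representative $\gamma$ in a given metric $g$ produces an arc in the same class $[\beta]$.
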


\begin{proof}
    A taut smoothing of a taut representative of $\alpha$ yields the desired free homotopy class.
\end{proof}

The analogous result for homotopy classes of arcs with free endpoints fails, as can be seen by choosing different metrics in Figure~\ref{fig: freearcs}.
In this case, we instead get:

\begin{corollary}\label{cor:universalarcfree}
In an orientable surface $\Sigma$, for every homotopy class $[\alpha]$ of arcs with free endpoints, there exists another homotopy class $[\beta]$ with $i[\beta]=i[\alpha]-1$ (if the endpoints of $[\alpha]$ lie on different boundary components of $\Sigma$) or $i[\beta]=i[\alpha]-2$ (if the endpoints of $[\alpha]$ lie on the same boundary component), such that 
$$\ell_g[\beta] < \ell_g[\alpha]$$ for every Riemannian metric $g$ on $\Sigma$.

\end{corollary}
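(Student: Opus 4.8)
The plan is to derive Corollary~\ref{cor:universalarcfree} directly from Theorem~\ref{cor:arcos libres} (the free-endpoints smoothing theorem) together with the same length-vs-intersection argument used in Corollary~\ref{cor: arcs geometryfull} and Corollary~\ref{cor:universalarc}. First I would fix a homotopy class $[\alpha]$ of arcs with free endpoints and pass to a taut representative $\alpha$, with $k := i[\alpha]$ self-intersections. If $k=0$ there is nothing to prove (the statement is vacuous, or one interprets it as no claim), so assume $k\geq 1$. By Theorem~\ref{cor:arcos libres}, $\alpha$ admits a smoothing $\alpha'$ that cannot be homotoped to have fewer than $k-2$ self-intersections. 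Taking $[\beta]$ to be the free homotopy class of $\alpha'$, this gives $i[\beta] \geq k-2$; and since smoothing removes exactly one transverse double point, $i[\beta] \leq k-1$. So a priori $i[\beta] \in \{k-2,k-1\}$.

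Next I would pin down which of these two values occurs according to whether the endpoints of $[\alpha]$ lie on the same or different boundary components. The key point here is Remark~\ref{rmk:orientations and connectedness}: for a single arc, a smoothing does not change the number of arc components, so $\alpha'$ is again a single arc. When the two endpoints of $\alpha$ lie on \emph{distinct} boundary components, $\alpha$ (hence $\alpha'$) has no half-bigons in its universal-cover preimage simply because a half-bigon requires both relevant ends of the preimage arc to land on the \emph{same} boundary geodesic; so $\alpha'$ is already taut with free endpoints, giving $i[\beta] = k-1$. When the endpoints lie on the \emph{same} boundary component, Claim~\ref{clm:semionlyone} inside the proof of Theorem~\ref{cor:arcos libres} shows $\alpha'$ has at most one half-bigon, and Claim~\ref{clm:semismall} shows it is small; removing it via a homotopy deletes exactly one more self-intersection, so the taut representative of $[\beta]$ has $k-2$ self-intersections, i.e.\ $i[\beta] = k-2$. (In the first case one can also just say: $\alpha'$ has no half-bigons and no monogons or bigons by Theorem~\ref{thm: arcs}, so it is taut with free endpoints.)

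Finally I would run the metric-independence argument exactly as in Corollary~\ref{cor: arcs geometryfull}: for any Riemannian metric $g$ on $\Sigma$ making $\partial\Sigma$ convex, realize $\alpha$ by the shortest geodesic arc with free endpoints in $[\alpha]$. The smoothing operation cuts and rejoins two subarcs of $\alpha$ without changing total length, and rounding out the resulting corners strictly decreases length, producing a representative of $[\beta]$ strictly shorter than the geodesic representative of $[\alpha]$. Hence $\ell_g[\beta] < \ell_g[\alpha]$ for every such $g$. The one subtlety to be careful about is the passage from ``$\alpha'$ has at most one small half-bigon'' to ``its taut representative has $i[\alpha]-2$ intersections'': I would note that homotoping away a small half-bigon is a single Reidemeister-type move removing exactly one double point, and that once all half-bigons, monogons, and bigons are gone the arc is taut with free endpoints by Proposition~\ref{prop:small}. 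This bookkeeping — matching the combinatorial drop in intersection number with the geometric length inequality in each of the two boundary cases — is the only real content; the geometric inequality itself is routine given the earlier corollaries.

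\begin{proof}
Let $\alpha$ be a taut representative of $[\alpha]$ with free endpoints, and set $k=i[\alpha]$; we may assume $k\geq 1$. By Theorem~\ref{cor:arcos libres}, $\alpha$ has a smoothing $\alpha'$ that cannot be homotoped to have fewer than $k-2$ self-intersections, and by Theorem~\ref{thm: arcs} (applied to $\alpha$ viewed as taut with fixed endpoints) $\alpha'$ is a connected arc with no monogons or bigons. Let $[\beta]$ be the free homotopy class of $\alpha'$. Since smoothing removes exactly one double point, $i[\beta]\leq k-1$, and combined with the first sentence, $i[\beta]\in\{k-2,k-1\}$.

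Suppose the endpoints of $[\alpha]$ lie on different boundary components of $\Sigma$. Then neither $\alpha$ nor $\alpha'$ has a half-bigon in $\widetilde\Sigma$, since a half-bigon has both of its non-boundary corners on arcs meeting the \emph{same} boundary component of $\widetilde\Sigma$. Hence $\alpha'$ has no monogon, bigon, or half-bigon, so by Proposition~\ref{prop:small} it is taut with free endpoints and $i[\beta]=k-1$.

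Suppose instead the endpoints of $[\alpha]$ lie on the same boundary component. By Claim~\ref{clm:semionlyone}, $\alpha'$ has at most one half-bigon, and by Claim~\ref{clm:semismall} that half-bigon is small; a homotopy removing it deletes exactly one self-intersection, after which the arc has no monogons, bigons, or half-bigons and is therefore taut with free endpoints by Proposition~\ref{prop:small}. Thus $i[\beta]=k-2$.

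It remains to prove the length inequality. Fix a Riemannian metric $g$ on $\Sigma$ making $\partial\Sigma$ convex, and realize $\ell_g[\alpha]$ by a shortest geodesic arc with free endpoints in $[\alpha]$; call it $\alpha$ again. The smoothing $\alpha'$ is obtained from $\alpha$ by cutting two subarcs and rejoining their endpoints in pairs, which does not change total length, followed by rounding out the resulting corners, which strictly decreases length. Hence $\ell_g[\beta]\leq \mathrm{length}_g(\alpha') < \mathrm{length}_g(\alpha) = \ell_g[\alpha]$ for every such $g$.
\end{proof}
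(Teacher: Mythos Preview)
Your approach is the same as the paper's: invoke Theorem~\ref{cor:arcos libres} (and the underlying Theorem~\ref{thm: arcs}) to produce a fixed smoothing $[\beta]$, split into the two boundary cases to determine $i[\beta]$, and then run the cut-and-round length argument. Two small points deserve attention.

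First, in the same-boundary case you write ``at most one half-bigon'' and then conclude $i[\beta]=k-2$. But ``at most one'' allows zero, in which case $\alpha'$ is already taut with free endpoints and $i[\beta]=k-1$. Your argument (and the paper's, which simply cites Theorem~\ref{cor:arcos libres}) really yields $i[\beta]\in\{k-1,k-2\}$; the exact value $k-2$ is not pinned down, and indeed the statement of the corollary is best read that way, consistent with Remark~\ref{cor:orthogeointro}.

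Second, your sentence ``a half-bigon has both of its non-boundary corners on arcs meeting the same boundary component of $\widetilde\Sigma$'' is garbled: a half-bigon has a single interior corner, and the relevant fact is that its two $\alpha'$-sides both emanate from the \emph{same} lift $\tilde b$ of a boundary component. That observation alone does not immediately exclude half-bigons when the endpoints lie on different components (two distinct lifts of $\alpha'$ can both start on $\tilde b$). The paper handles this by pointing back into the proof of Theorem~\ref{cor:arcos libres} --- specifically the opening line of Claim~\ref{clm:semionlyone} and the choice of smoothing coming from Theorem~\ref{thm: arcs} --- so you should cite that rather than give an ad hoc justification.

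Finally, in your length paragraph you silently change representatives of $[\alpha]$ to the $g$-geodesic and then smooth; for this to land in the fixed class $[\beta]$ you are implicitly using Lemma~\ref{lem: smoothing invariants htpy class}. It is worth saying so.
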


\begin{proof}
The second part of the claim follows from Theorem~\ref{cor:arcos libres}, for the same reason as the proof above.
    For the first part of the claim, note that the proof of Theorem~\ref{cor:arcos libres} actually shows that a taut arc with endpoints on distinct components of $\partial \Sigma$ has a smoothing that is taut without fixing the endpoints. Indeed, a smoothing of such an arc at an unoriented corner forming a maximal combinatorial angle cannot produce a half-bigon.
\end{proof}

By iteratively applying Corollary~\ref{cor:arcos libres}, we deduce the existence of certain sequences of homotopy classes that are simultaneously ordered by intersection number and length in every Riemannian metric. We record this as a remark:

\begin{remark}\label{cor:orthogeointro}
     Let $\Sigma$ be an orientable surface. 
 For every free homotopy class of arcs $[\alpha]$ there is a  sequence $\{[\alpha]:=[\alpha_1], \ldots, [\alpha_n]\}$   where  $[\alpha_n]$ is simple, and such that
 \begin{enumerate}
     \item for free homotopy classes with fixed endpoints, $i[\alpha_{j-1}]-1 = i[\alpha_j]$ and $\ell_g[\alpha_j]< \ell_g[\alpha_{j-1}]$ for each $j \in \{1, \ldots, i[\alpha]\}$, and
     \item for free homotopy classes free with free endpoints, then  $i[\alpha_{j-1}]-1 \geq i[\alpha_j]\geq i[\alpha_{j-1}]-2$ and $\ell_g[\alpha_j]< \ell_g[\alpha_{j-1}]$ for each $j \in \{1, \ldots, i[\alpha]\}$
 \end{enumerate} for every Riemannian metric $g$.
\end{remark}

\begin{remark}
    For a Riemannian metric on $\Sigma$ that makes the boundary convex, each homotopy class of arcs with free endpoints in  $\partial \Sigma$ contains a shortest representative that is perpendicular to the boundary. 
    Corollary~\ref{cor:universalarcfree}  
    applies to the lengths of these orthogeodesics in $\Sigma$. 
\end{remark}



Let $\mathcal{C}'$ be the free homotopy classes of curves in $\Sigma$ having a minimal representative satisfying the  condition in Theorem~\ref{cor:maincurvessmooth}.
From Theorem~\ref{thm:mainsmooth}, we  deduce a partial analogue to Corollary~\ref{cor:universalarcfree}:

\begin{corollary}\label{cor:maincurvessmooth2}
     Let $\Sigma$ be an orientable surface. Then for every free homotopy class of curves $[\alpha]$ in $\mathcal{C'}$ there exists a class $[\beta]$ with $i[\beta]=i[\alpha]-1$ and $$\ell_g[\beta] < \ell_g[\alpha]$$
    for every Riemannian metric on $\Sigma$.
\end{corollary}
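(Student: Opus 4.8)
The plan is to mimic the arc case (Corollary~\ref{cor:universalarc}) almost verbatim, using Theorem~\ref{thm:mainsmooth} — and more precisely Theorem~\ref{thm:taut smooth curve if no trisq intro} — in place of Theorem~\ref{thm: arcs}. Fix a free homotopy class $[\alpha]$ in $\mathcal{C}'$, and let $\alpha$ be a taut representative satisfying the hypothesis of Theorem~\ref{cor:maincurvessmooth}, i.e.\ $\alpha = \beta\gamma$ with $\beta,\gamma$ meeting only at their common endpoints $p$ and no triangle of $\alpha$ having $p$ as a vertex. (If instead the relevant representative satisfies condition~(1) of Theorem~\ref{thm:taut smooth curve if no trisq intro}, one uses Theorem~\ref{thm:mainsmooth} directly; the two cases are handled identically from here.) By Theorem~\ref{cor:maincurvessmooth} (resp.\ Theorem~\ref{thm:mainsmooth}), $\alpha$ admits a taut smoothing $\alpha''$; possibly after deleting a simple curve component, this yields a taut curve $\alpha'$ with exactly $i[\alpha]-1$ self-intersections. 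Set $[\beta] := [\alpha']$, so that $i[\beta] = i[\alpha]-1$ by tautness of $\alpha'$.

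It remains to check the length inequality for an \emph{arbitrary} Riemannian metric $g$ on $\Sigma$. First I would fix a taut geodesic representative $\alpha_g$ of $[\alpha]$ for the metric $g$. By Lemma~\ref{lem: smoothing invariants htpy class}, the existence of a taut smoothing is a property of the free homotopy class: the smoothing of $\alpha$ that produced $\alpha'$ corresponds, under the canonical intersection-point correspondence, to a taut smoothing of $\alpha_g$ at the image vertex, and the result is freely homotopic to $\alpha'$ (after possibly deleting the same simple component — note a simple closed geodesic has positive length, so deleting it only decreases total length, which is harmless here since we are comparing the classes $[\alpha]$ and $[\beta]$). Now smoothing $\alpha_g$ at a double point consists of cutting two arcs and rejoining them without changing total length, followed by rounding the two new corners, which strictly decreases length because $\alpha_g$ is a genuine geodesic and the rejoined curve is not (it has corners). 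Hence the resulting representative of $[\beta]$ is strictly shorter than $\alpha_g$, so $\ell_g[\beta] < \ell_g[\alpha_g] = \ell_g[\alpha]$. Since $g$ was arbitrary, this proves the corollary.

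The only genuine subtlety — and the step I expect to need the most care — is the interplay between the possible deletion of a simple curve component and the metric comparison: one must make sure that the class $[\beta]$ is defined \emph{once and for all} (topologically, via $\alpha$ and Theorem~\ref{cor:maincurvessmooth}), and that for each metric $g$ the \emph{same} topological smoothing of $\alpha_g$ realizes $[\beta]$. This is exactly what Lemma~\ref{lem: smoothing invariants htpy class} guarantees, provided the components are primitive and pairwise non-homotopic, which holds since $\alpha$ is a primitive taut curve and its taut smoothing — being taut — has primitive, pairwise non-homotopic components. Everything else is bookkeeping; no new geometric input beyond "rounding corners shortens length" is required, exactly as in Corollary~\ref{cor:universalarc}.
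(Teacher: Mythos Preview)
Your proposal is correct and follows essentially the same approach as the paper's proof, which is a two-line argument: Theorem~\ref{thm:mainsmooth} gives a taut smoothing $\alpha'$, whence $i[\alpha']=i[\alpha]-1$ and $\ell_g[\alpha']<\ell_g[\alpha]$. You have simply unpacked the length inequality via Lemma~\ref{lem: smoothing invariants htpy class} and the rounding-corners argument, exactly as in Corollary~\ref{cor:universalarc}; your final paragraph about primitive, pairwise non-homotopic components is unnecessary, since Lemma~\ref{lem: smoothing invariants htpy class} is applied to the single primitive curve $\alpha$ (and $\alpha_g$), not to the smoothing, so its hypotheses are trivially met.
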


\begin{proof}
Theorem~\ref{thm:mainsmooth} implies that $\alpha$ has a taut smoothing $\alpha'$. Hence $i[\alpha']=i[\alpha]-1$ and $\ell_g[\alpha'] < \ell_g[\alpha]$.
\end{proof}

\section{Further directions}\label{sec: further}

Unlike in Theorems~\ref{thm: arcs intro} and~\ref{thm:taut smooth curve if no trisq intro}, the method of proof used in Theorem~\ref{thm:multi-intro} is not constructive. It would be interesting to see whether one can extract an algorithm to identify taut smoothings for the 1-manifolds satisfying the conditions of Theorem~\ref{prop:taut pair}.

\begin{prob}\label{qn: algorithmintro}
Let $\alpha$ be a primitive  1-manifold that is taut with fixed endpoints and has at least two intersecting components. Construct an algorithm that finds the taut smoothings of $\alpha$. 
\end{prob}

 The approach in this paper requires one to be able to move between distinct minimal configurations of a free homotopy class. This adds flexibility, but creates some technical difficulties. Lemma~\ref{lem: smoothing invariants htpy class} shows that  if there is a taut smoothing in some configuration, then there is a taut smoothing in every minimal configuration, and in principle this smoothing can be found by keeping track of the triangle moves. From an algorithmic point of view, it would be preferable to be able to detect taut smoothings without having to change the configuration:

\begin{question}
     Is there an effective way to identify taut smoothings without performing any homotopies? In particular, is there a way to identify an unoriented corner that is not the corner of triangles, rhombi, or trapeziums?
 \end{question}

Part of the difficulty with finding taut smoothings of curves (arcs, etc...) lies in the fact that an arbitrarily chosen smoothing can be quite far from being taut. In particular, as mentioned in the introduction, there are examples of curves and arcs which have only one taut smoothing, or which have smoothings that remove all self-intersections (see Figure~\ref{fig: onlyoneforarcs}). 
Perhaps a more tractable approach to understanding the behaviour of smoothings is probabilistic:

\begin{question}\label{qn: random}

What proportion of the self-intersections does a ``random smoothing" of a taut $1$-manifold (for some sensible notion of randomness) remove?
\end{question}

We conjecture that, generically, taut smoothings exist:

\begin{conjecture}
 A random curve (for instance, in the sense of~\cite{AougabGaster22}) has a taut smoothing.
\end{conjecture}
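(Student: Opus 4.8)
The plan is to deduce the conjecture from Theorem~\ref{thm:mainsmooth}: it suffices to prove that, with probability tending to $1$ as $n\to\infty$, a random primitive curve $\alpha$ of combinatorial length $n$ has, in its taut position, an unoriented corner that is not the corner of any quadrilateral or pentagon of $\alpha$. Theorem~\ref{thm:mainsmooth} then yields a connected taut smoothing, and since an unoriented smoothing of a single curve is again a single curve (Remark~\ref{rmk:orientations and connectedness}) this is a taut smoothing in the sense of Question~\ref{qn:1 intro} with no extra components. I would stress at the outset that the ``easy'' cases noted after Theorem~\ref{thm:taut smooth curve if no trisq intro} are of no help here: a random curve fills $\Sigma$ and has no short simple subloops, so neither the non-filling criterion nor condition~(2) applies, and the whole point of the conjecture is that even a maximally complicated curve must possess a corner avoiding small polygons.

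First I would fix a model — $\alpha=\alpha_n$ the geodesic representative, for a fixed hyperbolic metric, of a uniformly random cyclically reduced word of length $n$, or of the trace of an $n$-step random walk on $\pi_1\Sigma$ (as in~\cite{AougabGaster22}) — and record two standard inputs: (i) with high probability $i(\alpha_n)\asymp n^2$, so $\alpha_n$ has $\Theta(n^2)$ self-intersection points and hence $\Theta(n^2)$ unoriented corners; and (ii) $\alpha_n$ equidistributes in the unit tangent bundle, so that two subarcs of $\alpha_n$ of length $L$ fail to cross with probability exponentially small in $L$, and more generally prescribed ``coincidences'' among a bounded number of subarcs are exponentially costly. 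The core step is then to show that the number $B$ of unoriented corners that \emph{are} corners of some quadrilateral or pentagon of $\alpha_n$ satisfies $B=o(n^2)$ with high probability (I would expect $B=O(n)$); together with (i) this forces, by pigeonhole, the desired good corner.

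For that estimate, observe that a quadrilateral or pentagon of $\alpha_n$ lifts to an embedded $k$-gon ($k\le 5$) in $\widetilde\Sigma$ whose $k$ sides lie on $k$ distinct lifts of $\alpha_n$ and whose $k$ corners project to distinct self-intersection points; recording the $2k\le 10$ preimages in $S^1$ of these corners, the polygon is described by at most ten cut points of the cyclic word together with a bounded amount of linking/nesting data. I would set up a first-moment argument in which the constraint that the $k$ sides bound an \emph{embedded} disk — consecutive sides meeting only at the shared corner, opposite sides disjoint — is exploited to force long coincidences among the corresponding subarcs of $\alpha_n$, so that by (ii) the expected number of such configurations, and of the corners they involve, is $O(n)$.

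The main obstacle is exactly this last estimate, and I do not expect it to be routine. Two long geodesic subarcs of a random curve typically cross many times, so requiring the sides of a $k$-gon to be pairwise non-crossing is a genuine restriction; the difficulty is to convert it into a quantitative saving, since a naive union bound over the $O(n^{10})$ choices of cut points is useless, and one really does expect a positive proportion of vertices of $\alpha_n$ to lie on some quadrilateral — so the argument must show that fixing the combinatorial type of an embedded $k$-gon pins down all but $O(1)$ of its cut points. Should the sparsity statement fail, the fallback is to work with a distinguished corner rather than a counted supply: keep an unoriented corner of maximal combinatorial angle (as in the proof of Theorem~\ref{thm:mainsmooth}) and use the statistics of $\alpha_n$ to exclude that this corner bounds a small polygon; but identifying a corner whose non-membership in small polygons can be \emph{certified} from bounded local data, and proving that a random curve has one, is itself the heart of the conjecture.
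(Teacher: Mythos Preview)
The paper does not prove this statement: it is stated as an open \emph{conjecture} in Section~\ref{sec: further}, with no accompanying argument. There is therefore no ``paper's own proof'' to compare against.

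Your proposal is not a proof either, and to your credit you say so. The strategy --- reduce to Theorem~\ref{thm:mainsmooth} by showing that a random taut curve of length $n$ has an unoriented corner not lying on any quadrilateral or pentagon --- is a natural one, but the core estimate (that the number of ``bad'' corners is $o(n^2)$, or even $O(n)$) is exactly the content of the conjecture, and you have not supplied it. The heuristic that ``embedded $k$-gons impose long coincidences among subarcs, hence are exponentially rare'' is not obviously correct: the sides of a small quadrilateral of $\alpha$ can be arbitrarily short geodesic segments, so there is no enforced long coincidence, and your own remark that a positive proportion of vertices may lie on \emph{some} quadrilateral undercuts the first-moment sketch. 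The fallback (analyse the maximal-angle corner directly) is likewise a restatement of the problem rather than a solution. So as it stands this is a plausible research plan, not a proof, and it goes strictly beyond anything the paper claims.
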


For multi-arcs with free endpoints,  it seems  hard to carry over the proof of Theorem~\ref{cor:arcos libres} without having to rule out various problematic scenarios that do not arise in the  case of a single arc. Nevertheless, we expect that the following holds: 

\begin{conjecture}
If $\alpha$ is a  multi-arc  with $n$ arc components and  $k\geq 1$ self-intersections, and  $\alpha$ is taut with free endpoints,  then $\alpha$ has a smoothing that cannot be homotoped to a multi-arc with less than $k-(n+1)$ self-intersections. 
\end{conjecture}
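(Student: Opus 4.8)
The strategy is to run the argument of Theorem~\ref{cor:arcos libres}, keeping track of how the number of half-bigons of a smoothing can grow with the number of arc components. By Theorem~\ref{thm: arcs} the multi-arc $\alpha$ --- which is in particular taut fixing its endpoints --- has a smoothing $\alpha'$ that is taut fixing its endpoints and has no more components than $\alpha$; since a smoothing never changes the number of arc components (Remark~\ref{rmk:orientations and connectedness}), $\alpha'$ is again an $n$-component multi-arc, and it realises $k-1$ self-intersections. As $\alpha'$ is taut fixing its endpoints it has no monogons and no bigons, so by Proposition~\ref{prop:small} the only obstruction to its being taut with free endpoints is the presence of half-bigons. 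Hence the whole statement reduces to showing that $\alpha'$ has at most $n$ half-bigons and that all of them are small; removing them successively then leaves a representative with at least $(k-1)-n = k-(n+1)$ self-intersections.

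That every half-bigon of $\alpha'$ is small is the multi-arc version of Claim~\ref{clm:semismall}. The proof of Claim~\ref{clm:semismall} is entirely local, at one boundary line $\tilde b$ of $\widetilde\Sigma$ and at the pair of lifts $a,a'$ bounding the half-bigon, and it does not use connectedness of the arc: if $a$ and $a'$ are lifts of the same component, the argument ruling out an axis for the covering translation $g$ with $a'=ga$ applies verbatim, while if they are lifts of distinct components their images in $\Sigma$ cannot overlap by general position, so the half-bigon is automatically small. That $\alpha'$ has at most $n$ half-bigons is the multi-arc version of Claim~\ref{clm:semionlyone}, and this is the crux. Since $\alpha$ has no half-bigons, every half-bigon of $\alpha'$ must run over one of the two corners of the smoothing, and being small it does so at most once. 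Lifting to $\widetilde\Sigma$ and grouping the half-bigons by the boundary line they abut, the overlap analysis of Claim~\ref{clm:semionlyone} shows that the half-bigons abutting a fixed line $\tilde b$ are \emph{thin}, each lying in the gap between two consecutive lifts of components adjacent to $\tilde b$: two nested or crossing half-bigons in a single region would produce a triangle, or a trapezium with touching corners, contradicting Remark~\ref{rmk:connectedsmooth4casesreverse} and the tautness of $\alpha'$ fixing its endpoints. The orientation observation of Claim~\ref{clm:semionlyone} --- consecutive lifts present their first smoothing-turn on opposite sides --- together with the fact that only two smoothing-turns are available, should then allow one to charge each half-bigon to a distinct arc component of $\alpha'$, so that in particular a component with both endpoints on one boundary line carries at most one half-bigon. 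This yields the bound $n$, hence $i[\alpha']\geq k-(n+1)$.

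The step I expect to be the obstacle is the multi-arc version of Claim~\ref{clm:semionlyone}. In the single-arc case the dichotomy ``a half-bigon on one side of $a$ forbids a thin one on the other side of $a$'' forces the count to be $1$ almost at once; with $n$ arcs one must instead organise the half-bigons abutting a common boundary component, handle arc components having both endpoints on the same boundary component, and control pairs of half-bigons that use the two different smoothing-turns --- ruling out the bad interleavings by playing the triangle/trapezium obstruction (tautness fixing endpoints) against the absence of half-bigons in $\alpha$ (tautness with free endpoints). A secondary subtlety, to be handled with the thinness statement, is to check that the $n$ half-bigons can indeed be removed one after another, each step removing exactly one self-intersection and creating no new half-bigon, so that the total cost is genuinely $n$ and not something less controlled.
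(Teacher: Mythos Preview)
The statement you are addressing is a \emph{conjecture} in the paper, not a theorem; the authors explicitly say that ``it seems hard to carry over the proof of Theorem~\ref{cor:arcos libres} without having to rule out various problematic scenarios that do not arise in the case of a single arc,'' and then state the conjecture without proof. So there is no proof in the paper to compare your proposal against. Your strategy is exactly the natural extension the authors have in mind, and your own caveats already locate the real obstacles; what follows is a sharper account of why those steps are genuinely open rather than routine.

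The multi-arc smallness claim is more delicate than your dichotomy suggests. The half-bigon lives in $\alpha'$, not in $\alpha$, and the two lifts bounding it are lifts of components of $\alpha'$; after the smoothing a single component of $\alpha'$ may be assembled from subarcs of two different components of $\alpha$, so ``lifts of distinct components of $\alpha$ cannot overlap'' does not immediately settle smallness of a half-bigon of $\alpha'$. You would still need the covering-translation/axis argument of Claim~\ref{clm:semismall}, applied to the relevant component of $\alpha'$, and check that nothing in that argument uses that there is only one arc.

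The counting step is the real gap, and your charging scheme is not yet an argument. In the single-arc case the dichotomy ``a half-bigon on one side of $a$ forbids one on the other side'' works because along a fixed boundary line $\tilde b$ the lifts of $\alpha$ form two interleaved $\langle g\rangle$-orbits with a common stabiliser; for a multi-arc the lifts adjacent to $\tilde b$ come from several components, there is no single $g$ organising them, and a half-bigon may join lifts of \emph{different} components. Thinness in the sense of Claim~\ref{clm:semionlyone} no longer has the same meaning, and the orientation observation (``first smoothing-turn on opposite sides'') need not alternate along $\tilde b$. Turning this into a global bound of $n$ half-bigons, summed over all boundary components, requires a new combinatorial argument, not a repetition of the old one. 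Finally, even granting at most $n$ half-bigons, the last step --- removing them one at a time at a cost of exactly one intersection each without creating new half-bigons --- is immediate for a single half-bigon but not for several, since the homotopy that kills one half-bigon may drag an endpoint past other arcs. This is precisely the sort of ``problematic scenario'' the authors allude to.
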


Finally, for our methods to work, it is essential to assume that $\Sigma$ be oriented. It seems likely that Question~\ref{qn:1 intro} has a negative answer without this hypothesis:

\begin{question}\label{qn: orientable?}
If $\Sigma$ is non-orientable, 
do there exist taut curves or arcs on $\Sigma$ that do not admit any taut smoothings? 
\end{question}

\bibliographystyle{alpha}
\bibliography{bib9.bib}

\end{document}